\documentclass[11pt, a4paper]{article}

\RequirePackage{amsthm,amsmath,amsfonts,amssymb}
\RequirePackage[authoryear]{natbib} 
\RequirePackage[colorlinks,citecolor=blue,urlcolor=blue,breaklinks]{hyperref}

\usepackage{bm}
\usepackage{mathrsfs}  
\usepackage{appendix}
\usepackage{commath}
\usepackage{dsfont}
\usepackage{subfig}
\usepackage{xcolor}
\usepackage{mathtools}
\usepackage{enumitem}
\usepackage{fancyhdr}
\usepackage{geometry}
\usepackage{caption}
\usepackage{booktabs}
\usepackage{multirow}
\usepackage{float}
\usepackage{algorithm}
\usepackage{algpseudocode}

\usepackage{xcolor}
\usepackage{setspace}
\newcommand{\N}{\mathbb{N}}
\newcommand{\R}{\mathbb{R}}
\newcommand{\eps}{\varepsilon}
\renewcommand{\P}{\mathbb{P}}
\newcommand\E{\mathbb{E}}
\newcommand{\mc}{\mathcal}

\theoremstyle{plain}
\newtheorem{theorem}{Theorem}[section]
\newtheorem{lemma}[theorem]{Lemma}

\newtheorem{proposition}[theorem]{Proposition}

\theoremstyle{remark}
\newtheorem{remark}{Remark}[section]

\newtheorem{assumption}{Assumption}[section]

\begin{document}

\title{Adversarially robust multiple testing in high dimensions
}

\author{
\begin{tabular}{c}
Anders Bredahl Kock\footnote{Kock's research was supported by the European Research Council (ERC) grant number 101124535 -- HIDI (UKRI EP/Z002222/1).  He is also a member of, and grateful for support from, i) the Aarhus Center for Econometrics (ACE), funded by the Danish National Research Foundation grant number DNRF186,  and ii) the Center for Research in Energy: Economics and Markets (CoRE).} \\ 
\footnotesize	University of Oxford \\
\footnotesize Department of Economics\\
\footnotesize	10 Manor Rd, Oxford OX1 3UQ
\\
\footnotesize	{\footnotesize	\href{mailto:anders.kock@economics.ox.ac.uk}{anders.kock@economics.ox.ac.uk}} 
\end{tabular}
\begin{tabular}{c}
David Preinerstorfer \\ 
{\footnotesize WU Vienna University of Economics and Business} \\
{\footnotesize Institute for Statistics and Mathematics} \\
{\footnotesize Welthandelsplatz 1, 1020 Vienna} \\ 
{\footnotesize	 \href{mailto:david.preinerstorfer@wu.ac.at}{david.preinerstorfer@wu.ac.at}}
\end{tabular}
}

\date{Preliminary version: July, 2026}

\maketitle	

\begin{abstract}
Robust multiple testing procedures for assessing equality restrictions on the coordinates of high-dimensional mean vectors are proposed. Our procedures are based on quantile-winsorization techniques, approximately control the familywise error rate (strongly) under adversarial contamination, and allow the number of hypotheses to grow exponentially with sample size, despite requiring only slightly more than two moments. Technically, our one-sample results build on recent Gaussian approximation inequalities for the distribution of high-dimensional quantile-winsorized means, whereas our two-sample results are based on extensions thereof, which we develop here and which could be of some interest in their own right. 
\end{abstract}

\section{Introduction}

Developing methods robust to adversarial contamination has attracted considerable interest in the modern statistics literature, cf., e.g.,~\cite{lai2016agnostic}, \cite{cheng2019high}, \cite{diakonikolas2019robust}, \cite{hopkins2020robust}, \cite{LM21}, \cite{minsker2021robust}, \cite{bhatt2022minimax}, \cite{depersin2022robust}, \cite{dalalyan2022all}, \cite{minasyan2023statistically}, \cite{minsker2023efficient}, \cite{oliveira2025finite}, and \cite{Wins1}. The observational assumption in this setting is that the statistician cannot access the original sample, but only observes a contaminated version of it, without knowing \emph{which} values have been contaminated or \emph{how} they have been contaminated. In addition to the contaminated sample, a non-random upper bound on the proportion of contaminated observations is available to the statistician. When conducting inference on the mean, as observed in \cite{LM21}, adversarial robustness, i.e., good statistical properties despite contamination, can be achieved through appropriately trimming or winsorizing the observations, cf.~also \cite{Wins1}. 

In addition to adversarial robustness, recent results in \cite{resende2024robust}, \cite{liu2024robust}, and~\cite{kphdapprox} uncovered another benefit of working with robust, instead of arithmetic, means in high-dimensional settings: Besides their robustness to contamination, suitably trimmed/winsorized means possess advantages in terms of the dimension-dependence in Gaussian approximation results that are uniformly valid without requiring strong moment or tail decay conditions.

Given the two just-mentioned benefits, from a statistical point of view, one may immediately wonder whether inference based on robust means necessarily entails some loss in efficiency. \cite{kprobfree} show that tests for a (high-dimensional) global null hypothesis based on quantile-winsorized means can be constructed that: (i)~are robust to adversarial contamination, (ii)~enjoy favorable dimension dependence despite relatively weak moment assumptions, and (iii)~do not imply a loss in power at first asymptotic order. They consider the one-sample case and focus on testing a global zero restriction on the unknown population mean, contributing to a large literature on tests for equality restrictions on high-dimensional means for the one-sample or multi-sample case, cf., e.g., the articles~\cite{hotelling1931}, \cite{dempster1958high}, \cite{bai1996effect}, \cite{tony2014two}, \cite{wang2015high}, \cite{xu2016adaptive}, \cite{xue2020distribution}, \cite{zhang2020simple}, \cite{kock2023consistency, kock2024enhanced}, \cite{yang2024new}, \cite{jiang2024nonparametric}, \cite{qiu2025self}, and the recent overview in~\cite{huang2022overview}. 

In many practical situations, however, it is not of terminal interest whether the global null hypothesis can be rejected, the setting which was studied in \cite{kprobfree}. Rather, the statistician intends to answer \emph{which} of the null hypotheses are in fact violated. Treating the latter question properly requires a \emph{multiple} testing procedure. Multiple testing procedures based on (non-robust) high-dimensional arithmetic means and corresponding Gaussian approximation results were investigated in~\cite{chernozhukov2013gaussian}, cf.~also~\cite{belloni2018high}, their approach crucially relying on general purpose multiple testing algorithms in \cite{romano2005exact}.  Arithmetic mean-based procedures naturally break down in the adversarially contamination setting, as such means (as well as the corresponding covariances) are genuinely non-robust, cf., e.g.,~\cite{kprobfree} for illustrations in a hypothesis testing framework. Furthermore, to allow for the dimension to increase exponentially in sample size in the Gaussian approximation results in~\cite{chernozhukov2013gaussian}, it is necessary to impose relatively strong moment conditions, cf.~also \cite{zhangwu2017} and~\cite{kock2024remark}.

In the present paper, we extend the multiple testing approach taken by~\cite{chernozhukov2013gaussian} and~\cite{belloni2018high} (i.e., combining high-dimensional Gaussian approximations with insights from \cite{romano2005exact}) to the adversarially contaminated setting. We base our multiple testing procedures on quantile-winsorization methods for location and covariance estimation. 

In the \emph{one-sample case}, we can apply the Gaussian approximation results for high-dimensional winsorized means recently obtained in \cite{kphdapprox} and~\cite{kprobfree}. This allows us to obtain adversarially robust multiple testing procedures with performance guarantees on their familywise error rate in finite samples in a relatively straightforward way. To cover also the \emph{two-sample case}, for which Gaussian approximations are currently unavailable in the literature, we here develop suitable high-dimensional Gaussian approximation results. This includes the development of a covariance estimator that satisfies suitable concentration properties, and which we develop in the pooled and non-pooled case. 

In both, the one-sample and two-sample case, we develop procedures based on ``worst-case'' critical values (Algorithms~\ref{algo:gauss} and~\ref{algo:gaussts}), exact (but numerically infeasible) critical values from a Gaussian with estimated covariance matrix (Algorithms~\ref{algo:boot} and~\ref{algo:bootts}), and approximate (bootstrap) critical values from a Gaussian with estimated covariance matrix (Algorithms~\ref{algo:boot2} and~\ref{algo:bootts2}). Taking the inaccuracy in determining the critical values into account in the analysis appears to be new in this context. Our theoretical main contributions (Theorems~\ref{thm:mainthG},~\ref{thm:mainthB},~\ref{thm:mainthB2},~\ref{thm:mainthGts},~\ref{thm:mainthBts},~\ref{thm:mainthBts2}) provide finite sample upper bounds on the familywise error rate. The bounds are valid uniformly over large families of distributions, requiring only mild moment conditions (cf.~Assumptions~\ref{ass:setting} and~\ref{ass:settingY}). We discuss under which asymptotic scenarios said upper bounds converge to the pre-scribed level (cf.~Assumptions~\ref{ass:asyX} and~\ref{ass:asyXY}).

Besides being useful for constructing multiple testing procedures, the two-sample results may be of some interest in their own right. For example, we construct and briefly verify (asymptotic) size control of three tests for the global null hypothesis in the two-sample case; cf.~Theorem~\ref{thm:mainthGtsgl}. Our tests can be viewed as robust versions of the test for the global hypothesis studied in \cite{xue2020distribution}, the analysis of which required extending one-sample approximation results from \cite{chernozhukov2017central} to the two-sample setting. In contrast to their test, which is based on non-robust means, we investigate a normalized test statistic based on quantile-winsorized estimators, which is adversarially robust. Their procedure imposes stronger moment conditions (we only require slightly more than~$2$ moments) and is not robust to adversarial contamination. 

The results for the one-sample case are discussed in Section~\ref{sec:onesample}. Two-sample testing problems are covered in Section~\ref{sec:ts}. All proofs are collected in the Appendices~\ref{app:aux}-\ref{app:proofsts}.

\section{One-sample multiple testing under contamination}\label{sec:onesample}

\subsection{Observational setting: the one-sample case}\label{sec:osobs}

To set the stage,\footnote{Several quantities introduced in the one-sample setting carry the index~``$X$.'' This can be considered somewhat superfluous in the one-sample setting, but turns out convenient later when considering the two-sample setting in Section~\ref{sec:ts}. } let~$X_1, \hdots, X_{n_X}$ be~$d$-dimensional i.i.d.~random vectors with expectation~$\mu^X := \E(X_1)$ and covariance matrix~$\Sigma^X := \E((X_1 - \mu^X)(X_1 - \mu^X)')$, say, and consider the multiple (hypothesis) testing problem
\begin{equation}\label{eq:testingproblem}
\mathsf{H}_{0,j}: \mu^X_j = 0 \qquad\text{vs.}\qquad \mathsf{H}_{1,j}: \mu^X_j \neq 0, \qquad \text{for } j = 1, \hdots, d.
\end{equation}
\emph{Throughout this article, we assume~$d \geq 2$, an assumption that entails no loss of generality in a multiple testing setting, and~$n_X > 3$.}

As pointed out in the introduction, we are interested in a scenario where the statistician does \emph{not} have direct access to~$X_1, \hdots, X_{n_X}$. Rather, an ``adversary'' first inspects~$X_1, \hdots, X_{n_X}$, and then hands over a \emph{contaminated} sample of random vectors $\tilde{X}_{1},\hdots,\tilde{X}_{n_X}$ in~$\R^d$ to the statistician satisfying
\begin{equation}\label{eq:contamfrac}
\left|\cbr[1]{i\in\cbr[0]{1,\hdots,n_X}:\tilde{X}_{i}\neq X_{i}}\right|
\leq
\overline{\eta}^X n_X.
\end{equation}
The quantity~$\overline{\eta}^X  \in[0,1/2)$ denotes a \emph{known and non-random} upper bound on the proportion of contaminated observations. It is not required that~$\overline{\eta}^X$ be the smallest such upper bound (although smaller bounds reduce winsorization in the methods we introduce below and hence lead to a more efficient use of the data). For easy reference, we summarize the setting laid out above, together with some technical (moment) conditions, in the following assumption. We denote~$\sigma^X_{m,j} := [\E(|X_{1,j}- \mu^X_j|^m)]^{1/m}$ for~$m \geq 1$.
\begin{assumption}\label{ass:setting}
The~$X_1,\hdots,X_{n_X}$ are i.i.d.~random vectors in~$\R^d$ with~$\E |X_{1,j}|^{m_X}<\infty$ for some~$m_X\in(2,\infty)$ and all~$j=1,\hdots,d$,~$\mu^X := \E(X_1)$, and~$\Sigma^X := \E((X_1 - \mu^X)(X_1 - \mu^X)')$. There exist strictly positive constants~$b^X_1$ and~$b^X_2$ such that~$\min_{j=1,\hdots,d}\sigma^X_{2,j}\geq b^X_1$ and $\sigma^X_{m_X}:=\max_{j=1,\hdots,d}\sigma^X_{m_X,j}\leq b^X_2$. The actually observed adversarially contaminated random vectors (in~$\R^d$) are denoted~$\tilde{X}_1,\hdots,\tilde{X}_{n_X}$ and satisfy~\eqref{eq:contamfrac}.	
\end{assumption}

Most of our results below are finite sample results. Hence, we may think of, e.g.,~$\overline{\eta}^X$ depending on~$n_X$, or~$d$ depending on~$n_X$, without showing this explicitly in our notation. To lighten notation, we will abstain from showing the dependence of several quantities on sample size or dimension whenever this causes no confusion.

\subsection{Winsorized means, covariance matrix estimators, self-normalized statistics}

For real numbers~$x_1,\hdots,x_l$, denote by~$x_1^*\leq \hdots\leq x_l^*$ their non-decreasing rearrangement, and for any pair of \emph{winsorization points}~$-\infty<\alpha\leq\beta<\infty$, define the \emph{winsorization function}
\begin{equation*}
\phi_{\alpha,\beta}(x)
:=
\begin{cases}
	\alpha\qquad \text{if }x<\alpha,\\
	x\qquad \text{if }x\in[\alpha,\beta],\\
	\beta\qquad \text{if }x>\beta.
\end{cases}
\end{equation*} 
Given~$\lambda_{1, X}$ and~$\lambda_{1, X}'$ in~$(1, \infty)$ as well as~$\lambda_{2, X}$ and~$\lambda_{2, X}'$ in~$(0, \infty)$, we introduce the \emph{winsorization parameters} for mean and covariance estimation, respectively,
\begin{equation}\label{eq:epsfamx}
\eps_{X} =:\lambda_{1, X}\cdot \overline{\eta}^X +\lambda_{2, X}\cdot \frac{\log(dn_X)}{n_X} \quad \text{ and } \quad \eps_{X}':=\lambda_{1, X}' \cdot  \overline{\eta}^X + \lambda_{2, X}'\cdot \frac{\log(d^2n_X)}{n_X}.
\end{equation}
We emphasize that the symbols~$\eps_{X}$ and~$\eps_{X}'$ (as well as their two-sample analogues that will be introduced in Section~\ref{sec:ts}) \emph{throughout always} refer to the quantities in~\eqref{eq:epsfamx}, without mentioning this explicitly. 
In the subsequent definitions (see also \cite{kphdapprox}, where the estimators were introduced with somewhat different notation, cf.~also Footnote~\ref{rem:notKP}), we shall tacitly assume that the winsorization parameters are smaller than~$1/2$ (possible violations of that assumption, implying that some of the subsequent quantities are not well defined, will be taken care of separately in the definition of our multiple testing procedures). Let the lower and upper winsorization points for the~$j$-th coordinate be defined as~
\begin{equation}\label{eqn:winsp}
\hat{\alpha}^X_j:=\tilde X_{\lceil \eps_X n_X \rceil,j}^* \quad \text{ and } \quad \hat{\beta}^X_j:=\tilde X^*_{\lfloor(1-\eps_X)n_X\rfloor + 1,j},
\end{equation}
and define~$S_{W}^{\dagger, X}\in\R^d$, the properly scaled winsorized location estimator, via its coordinates
\begin{equation}\label{eqn:winsmeandef}
S^{\dagger, X}_{W, j}
:=
n_X^{-1/2}\sum_{i=1}^{n_X}\phi_{\hat\alpha_j^X,\hat\beta_j^X}(\tilde{X}_{i,j}), \quad j=1,\hdots,d.
\end{equation}
Likewise, we define its centered version~$S_{W, S}^{\dagger, X}\in\R^d$ via
\begin{equation}\label{eqn:winsmeandefc}
S^{\dagger, X}_{W, S, j}
:=
n_X^{-1/2}\sum_{i=1}^{n_X} \left(\phi_{\hat\alpha_j^X,\hat\beta_j^X}(\tilde{X}_{i,j}) - \mu^X_j \right), \quad j=1,\hdots,d.
\end{equation}
To estimate~$\Sigma^X$, we use a similar winsorization idea: for~$j=1,\hdots,d$ set~
\begin{equation}\label{eqn:winsp2}
\hat{a}^X_j:=\tilde{X}_{\lceil \eps_X' n_X \rceil,j}^*, \quad \hat{b}^X_j:=\tilde{X}_{\lfloor (1-\eps_X')n_X \rfloor + 1,j}^*, \quad \text{ and } \quad  \tilde{\mu}^X_{j}:=n_X^{-1}\sum_{i=1}^{n_X}\phi_{\hat{a}^X_j,\hat{b}^X_j}(\tilde{X}_{i,j}),
\end{equation}
and define the (positive semi-definite and symmetric, cf.~Remark~\ref{rem:gramrep} below) covariance estimator~$\tilde{\Sigma}^X$ via
\begin{equation}\label{eq:tildeSigma}
\tilde{\Sigma}^X_{j,k}
:=
n_X^{-1}\sum_{i=1}^{n_X}\sbr[1]{\phi_{\hat{a}^X_j,\hat{b}^X_j}(\tilde{X}_{i,j})-\tilde{\mu}^X_{j}}\sbr[1]{\phi_{\hat{a}^X_k,\hat{b}^X_k}(\tilde{X}_{i,k})-\tilde{\mu}^X_{k}},\qquad 1\leq j,k\leq d.
\end{equation}

\begin{remark}[Choosing~$\lambda_{1, X}, \lambda_{1, X}', \lambda_{2, X}, \lambda_{2, X}'$ in practice]\label{rem:tpch}
We make the following remarks concerning the practical choice of the tuning parameters underlying the winsorization parameters in~\eqref{eq:epsfamx}. 
\begin{enumerate}
\item The parameters~$\lambda_{1, X} > 1$ and~$\lambda_{1, X}' > 1$ guarantee that at least~$\overline{\eta}^X n_X$ observations are winsorized on the lower and upper range of the contaminated data. It is clear that in the present adversarial contamination setting, cf.~in particular Equation~\eqref{eq:contamfrac}, this amount of winsorization cannot be avoided. Based on numerical results in~\cite{kphdapprox}, we suggest to choose~$\lambda_{1, X}$ and~$\lambda_{1, X}'$ close to~$1$, e.g., equal to~$1.01$. [Note that in settings where there is no adversarial contamination, i.e., where~$\overline{\eta}^X = 0$ which we do not rule out,~$\lambda_{1, X}$ and~$\lambda_{1, X}'$ play no role.]
\item The parameters~$\lambda_{2, X} > 0$ and~$\lambda_{2, X}' > 0$ are technical artifacts from the application of concentration inequalities in the proofs underlying \cite{Wins1}, cf.~also \cite{LM21}, and should be chosen close to zero. In practice, again following numerical observations in~\cite{kphdapprox}, we suggest~$\lambda_{2, X} = 0.1$ and ~$\lambda_{2, X}' = 0.07$, and note that for this choice, the number of observations winsorized in our estimators for~$\mu^X$ and~$\Sigma^X$, respectively, is about the same.
\end{enumerate}
\end{remark}

Conducting statistical inference based on~$S_{W}^{\dagger, X}$ directly, could imply disadvantages in settings where the variances of the coordinates of~$X_i$ differ, a case which occurs frequently in practice. Therefore, we base our multiple testing procedures on a normalized version: let~$\tilde{\sigma}^X_{j}$,~$j=1,\hdots,d$, denote the square roots of the diagonal elements of~$\tilde{\Sigma}^X$, and define the normalized analogue~$S_{W}^X$ to~$S_{W}^{\dagger, X}$ via 
\begin{equation}\label{eqn:SXWdef}
S_{W,j}^{X}
=
\frac{S^{\dagger, X}_{W, j}}{\tilde{\sigma}^X_{j}}
=
\frac{1}{\sqrt{n_X}\tilde{\sigma}^X_{j}}\sum_{i=1}^{n_X}\phi_{\hat\alpha^X_j,\hat\beta^X_j}(\tilde{X}_{i,j}),\quad j=1,\hdots,d;
\end{equation}
for completeness, we set~$S_{W,j}^{X} = 0$ in case~$\tilde{\sigma}^X_{j} = 0$. Furthermore, we denote the correlation matrix corresponding to~$\tilde{\Sigma}^X$ by~$\tilde{\Sigma}^X_{0}$; i.e., for any pair of indices~$i,j$ we define 
\begin{equation}\label{eqn:corrmatdef}
\tilde{\Sigma}^X_{0, i, j} := 
\begin{cases}
\tilde{\Sigma}^X_{i,j}/\sqrt{\tilde{\Sigma}^X_{i,i}\tilde{\Sigma}^X_{j,j}} & \text{ if } \tilde{\Sigma}^X_{i,i} \times \tilde{\Sigma}^X_{j,j} > 0, \\
0 & \text{ if } \tilde{\Sigma}^X_{i,i} \times \tilde{\Sigma}^X_{j,j} = 0 \text{ and } i \neq j, \\
1 & \text{ if } \tilde{\Sigma}^X_{i,i}= 0 \text{ and } i = j.
\end{cases}
\end{equation}
Denoting~$\tilde{D}^X := \mathrm{diag}(\tilde{\sigma}^X_{1}, \hdots, \tilde{\sigma}^X_{d})$, we immediately see that~$\tilde{\Sigma}^X_{0} = (\tilde{D}^X)^{-1} \tilde{\Sigma}^X (\tilde{D}^X)^{-1}$ in case all diagonal elements of~$\tilde{D}^X$ are positive. Furthermore,~$\tilde{\Sigma}^X_{0}$ is a correlation matrix also in those cases where a variance estimator vanishes (and where~$(\tilde{D}^X)^{-1} \tilde{\Sigma}^X (\tilde{D}^X)^{-1}$ would hence not be a well-defined expression).

\begin{remark}\label{rem:gramrep}
Denoting~$$\tilde{\mu}^X = (\tilde{\mu}^X_1, \hdots, \tilde{\mu}^X_d)' \quad \text{ and } \quad \phi_{\bm{\hat{a}}^X, \bm{\hat{b}}^X}(\tilde{X}_i) := (\phi_{\hat{a}^X_1,\hat{b}^X_1}(\tilde{X}_{i,1}), \hdots, \phi_{\hat{a}^X_d,\hat{b}^X_d}(\tilde{X}_{i,d}))',$$ it clearly holds that
\begin{equation}\label{eqn:gramrep}
\tilde{\Sigma}^X = \tilde{G}^{X} [\tilde{G}^{X}]' \quad \text{ for } \quad \tilde{G}^{X} := n_X^{-1/2} \left[\phi_{\bm{\hat{a}}^X, \bm{\hat{b}}^X}(\tilde{X}_1) - \tilde{\mu}^X, \hdots, \phi_{\bm{\hat{a}}^X, \bm{\hat{b}}^X}(\tilde{X}_{n_X}) - \tilde{\mu}^X\right];
\end{equation}
[in other words,~$\tilde{\Sigma}^X$ is just the sample covariance matrix (the scaling being by sample size) of the quantile-winsorized contaminated observations].
Correspondingly, if the variances in~$\tilde{D}^X$ are all non-zero, we have
\begin{equation}\label{eqn:gramrep0}
\tilde{\Sigma}^X_0=  [\tilde{D}^X]^{-1}\tilde{G}^{X} [\tilde{G}^{X}]'[\tilde{D}^X]^{-1}.
\end{equation}
The representations in~\eqref{eqn:gramrep} and~\eqref{eqn:gramrep0} can be computationally convenient when one needs to sample from, e.g.,~$\mathsf{N}_d(0, \tilde{\Sigma}^X)$ or~$\mathsf{N}_d(0, \tilde{\Sigma}^X_0)$, which can then be achieved by pre-multiplying by~$\tilde{G}^{X}$ or by~$[\tilde{D}^X]^{-1}\tilde{G}^{X}$, respectively, a sample from~$\mathsf{N}_{n_X}(0, I_{n_X})$, which is typically referred to as a ``Gaussian multiplier bootstrap'' in the literature mentioned in the introduction.
\end{remark}

Before we move on, let use observe that in case one of the winsorization parameters is close to~$1/2$, one winsorizes the bottom and top~50\% of the observations in each coordinate, which cannot work well (in particular when it comes to variance estimation). Although our procedures are well defined for~$\varepsilon_X$ or~$\varepsilon_X'$ (smaller but) close to~$1/2$, and although our theoretical guarantees are finite sample results, they are mostly informative in situations where~$\varepsilon_X$ or~$\varepsilon_X'$ are small. Therefore, in addition to finite sample upper bounds, all our results also contain asymptotic statements, illustrating when the upper bounds can be expected to be close to the targeted value. [An analogous observation also applies to the results in Section~\ref{sec:ts} and will not be restated.] 
Specifically, we formulate asymptotic convergence statements in the following regime:
\begin{assumption}\label{ass:asyX}
	It holds that~$n_X \to \infty$, and that~$d = d(n_X)$ and~$\overline{\eta}_X = \overline{\eta}_X(n_X)$ satisfy
	\begin{equation}\label{eqn:convc} \sqrt{n_X\log(d)}\overline{\eta}_X^{1-\frac{1}{m_X}}\to 0 \qquad \text{ and } \qquad \log(d)/n_X^{\frac{m_X-2}{5m_X-2}}\to 0.
	\end{equation}
\end{assumption}

\subsection{Large-scale one-sample multiple testing}\label{sec:osmt}

Based on uniform Gaussian approximation statements over hyper-rectangles for the winsorized means~$S^X_{W}$ developed in \cite{kphdapprox}, which we restate in a way convenient to our endeavor in Theorems~\ref{thm:HDGauss} and~\ref{thm:HDGauss_studentized} in Appendix~\ref{app:aux},\footnote{Note that \cite{kphdapprox} show the dependence of several quantities on sample size, for example, whereas we have chosen not to do this at several places for the sake of readability.\label{rem:notKP}} and the ideas to construct generic step-down methods for multiple testing problems in~\cite{romano2005exact}, we shall now propose robust step-down procedures~$\hat{\Pi} = \hat{\Pi}(\tilde{X}_1, \hdots, \tilde{X}_{n_X}) \subseteq \{1, \hdots, d\}$, say, to identify the true null hypotheses among~\eqref{eq:testingproblem}, i.e.,~$$J(\mu^X) := \left\{j \in [d] : \mu^X_j = 0\right\} \quad \text{ where} \quad [d]:=\cbr[0]{1,\hdots, d}.$$ Our main goal here is to obtain procedures that (under Assumption~\ref{ass:setting}) approximately control the familywise error rate (FWER), i.e., the probability~$\P(J(\mu^X) \not \subseteq \hat{\Pi} )$ that at least one true null hypothesis is not contained in~$\hat{\Pi}$. Note that we aim to control the FWER in the ``strong'' sense, i.e., we do not assume that~$J(\mu^X) = [d]$, which would only allow us to establish ``weak'' control of the FWER, cf., e.g., \cite{dudoit2003multiple}.

We note that properties of robust and non-robust tests for the global (as opposed to the multiple) testing problem corresponding to~\eqref{eq:testingproblem} have been studied in \cite{kprobfree}.

\subsubsection{Gaussian and bootstrap-based quantile functions and uniform approximations}\label{sec:quants}

In the following discussion, we let~$Z \sim N_d(0, I_d)$ be independent of~$\tilde{X}_1,\hdots, \tilde{X}_n$. To define the (worst-case) critical values for our Gaussian-approximation based multiple testing procedure, we proceed as follows: For any non-empty set~$A \subseteq [d]$, we denote the quantile function\footnote{\label{foot:qf}Given a cumulative distribution function (cdf)~$F$ on~$\R$, we denote by~$Q_F(\gamma) := \inf \{x \in \R: F(x) \geq \gamma\}$, for~$\gamma \in (0, 1)$, its quantile function.} of (the cdf of)~$\max_{j = 1, \hdots, |A|}|Z_j|$ by~$[0, 1] \ni \beta \mapsto c_{\beta, A}$, with the convention that~$c_{0, A} := 0$ and~$c_{1, A} := \infty$. Obviously,
\begin{equation}\label{eq:MonCVG}
c_{\cdot,A_1}\leq c_{\cdot,A_2} \quad \text{ whenever } \quad \emptyset \neq A_1\subseteq A_2\subseteq[d],
\end{equation}
which extends to analogous conditional monotonicity statements in case the sets~$A_1$ or~$A_2$ depend on the data. Note that the just-introduced critical values are based on a Gaussian distribution with independent coordinates (which corresponds to the worst-case correlation structure in our case; cf.~Equation~\eqref{eqn:ksiquant} below). It is well known, and easy to verify, that the following closed-form expression holds ($\Phi^{-1}$ denoting the quantile function of the standard normal distribution)
\begin{equation}\label{eqn:Gqcf}
c_{\beta, A} = \Phi^{-1} \left[
(\beta^{1/|A|} + 1)/2
\right].
\end{equation}
To define the critical values our (multiplier) bootstrap-based multiple testing procedures exploit, we first define, for a non-empty set~$\hat{A} := \hat{A}(\tilde{X}_1,\hdots, \tilde{X}_{n_X}) \subseteq [d]$, the quantile function corresponding to the distribution of~$\max_{j\in \hat{A}}|(\tilde{\Sigma}^{X, 1/2}_{0}Z)_j|$ conditionally on~$\tilde{X}_1,\hdots, \tilde{X}_{n_X}$ by~$[0, 1] \ni \beta \mapsto c_{\beta,\hat{A}}(\tilde{\Sigma}^{X}_{0})$, i.e.,~$c_{0,\hat{A}}(\tilde{\Sigma}^{X}_{0}) := 0$,~$c_{1,\hat{A}}(\tilde{\Sigma}^{X}_{0}) := \infty$, and\footnote{To ensure that equality can always be achieved in~\eqref{eqn:bootqdef} by a suitable choice of a quantile, we used that the cdf of~$\max_{j\in \hat{A}}|(\tilde{\Sigma}^{X, 1/2}_{0}Z)_j|$ conditionally on~$\tilde{X}_1,\hdots, \tilde{X}_{n_X}$ is continuous, because its distribution possesses an everywhere strictly positive Lebesgue-density on~$(0, \infty)$ (cf.~also~\eqref{eqn:corrmatdef}). Note also that we use~$\tilde{\Sigma}^{X, 1/2}_{0}$ as a shorthand for~$(\tilde{\Sigma}^{X}_{0})^{1/2}$; and will use similar notation in later sections without further discussing this explicitly, e.g., in Section~\ref{sec:bootqts}.\label{foot:quantinv2}}
\begin{equation}\label{eqn:bootqdef}
\P\del[2]{\max_{j\in \hat{A}}|(\tilde{\Sigma}^{X, 1/2}_{0}Z)_j|\leq c_{\beta,\hat{A}}(\tilde{\Sigma}^{X}_{0})\mid \tilde{X}_1,\hdots, \tilde{X}_{n_X}}=\beta.
\end{equation} 
Note that for~$\hat{A}_1\subseteq \hat{A}_2\subseteq[d]$, it holds that~$\max_{j\in \hat{A}_1}|(\tilde{\Sigma}^{X, 1/2}_{0}Z)_j| \leq \max_{j\in \hat{A}_2}|(\tilde{\Sigma}^{X, 1/2}_{0}Z)_j|$, translating into the inequality relation on the (conditional) quantile functions
\begin{equation}\label{eq:MonCVB}
c_{\cdot,\hat{A}_1}(\tilde{\Sigma}^{X}_{0}) \leq c_{\cdot,\hat{A}_2}(\tilde{\Sigma}^{X}_{0}) \quad \text{ whenever } \quad \emptyset \neq \hat{A}_1\subseteq \hat{A}_2\subseteq[d].
\end{equation}

For controlling the FWER of bootstrap-based multiple testing procedures, Proposition~\ref{prop:BSAlwaysvalid} in Appendix~\ref{app:proofsos} relates the just-defined (conditional) quantile function to the (conditional) quantile function~$[0, 1] \ni \beta \mapsto c_{\cdot, \hat{A}}(\Sigma_0^X)$, say, of the random variable~$\max_{j\in \hat{A}}|(\Sigma^{X, 1/2}_{0}Z)_j|$ (again with the convention that~$c_{0, \hat{A}}(\Sigma^X_0) := 0$ and~$c_{1, \hat{A}}(\Sigma^X_0) := \infty$). Here, the estimator~$\tilde{\Sigma}_0^X$ was replaced by the true target~$\Sigma_0^X$, and that again (conditionally)
\begin{equation}\label{eq:MonCVBapprox}
c_{\cdot, \hat{A}_1}(\Sigma_0^X) \leq c_{\cdot, \hat{A}_2}(\Sigma_0^X) \quad \text{ whenever } \quad \emptyset \neq \hat{A}_1\subseteq \hat{A}_2\subseteq[d].
\end{equation}

We also note that the Khatri-{\v{S}}id{\'a}k inequality (e.g., from Corollary 2.4.6 in~\cite{gine2016mathematical}; cf.~\cite{khatri1967certain} and \cite{vsidak1967rectangular}) implies, for every~$\hat{A}$ as above, that (conditionally)
\begin{equation}\label{eqn:ksiquant}
c_{\cdot, \hat{A}}(\Sigma_0^X) \leq c_{\cdot, \hat{A}} \quad \text{ and } \quad c_{\cdot, \hat{A}}(\tilde{\Sigma}_0^X) \leq c_{\cdot, \hat{A}}.
\end{equation}

To formulate our finite sample bounds efficiently, we make use of the abbreviations
\begin{equation}\label{eqn:abcdef}
\begin{aligned}
\mathfrak{A}^X_{n_X} &:= \sbr[3]{\frac{\log^{5-\frac{2}{m_X}}(dn_X)}{n_X^{1-\frac{2}{m_X}}}}^{\frac{1}{4}}
+ 
\sbr[3]{\overline{\eta}_X^{1-\frac{1}{m_X}}+\sbr[2]{\frac{\log(dn_X)}{n_X}}^{1-\frac{1}{m_X}}}\sqrt{n_X\log(d)}  \\ & \hspace{2cm} + \log(d) \left[ \overline{\eta}_X^{1-\frac{2}{m_X}}+\left[\frac{\log(dn_X)}{n_X}\right]^{1-\frac{2}{m_X}} \right]^{1/2},\\[4pt]
\mathfrak{B}^X_{n_X} &:= \sqrt{\log(d)\log(dn_X)} \times \mathfrak{d}^X_{n_X} \quad \text{ for } \quad \mathfrak{d}^X_{n_X} := \overline{\eta}_X^{1-\frac{2}{m_X}}+\del[2]{\frac{\log(dn_X)}{n_X}}^{1-\frac{1}{(m_X/2)\wedge 2}}, \\
\mathfrak{C}^X_{n_X} &:= \log(d)\times \sqrt{ \mathfrak{d}^X_{n_X}}.
\end{aligned}
\end{equation}
Note that these quantities all depend on~$d, n_X, 
\overline{\eta}_X$ and~$m_X$, but that we only show their dependence on~$n_X$ explicitly for notational convenience. 
\subsubsection{Multiple testing procedures and theoretical guarantees}

Our first multiple testing procedure~$\hat{\Pi}_G$, say, is defined in Algorithm~\ref{algo:gauss}. It is built on worst-case critical values (cf.~the relation in~\eqref{eqn:ksiquant}) from a Gaussian approximation result concerning the winsorized normalized averages~$S^X_{W}$, which was obtained in~\cite{kphdapprox}, and which we collect in Theorem~\ref{thm:HDGauss_studentized} in Appendix~\ref{app:aux}. 
\begin{algorithm}
\caption{Step-down procedure for testing \eqref{eq:testingproblem} based on Gaussian critical values with worst-case correlation matrix (i.e., under independence)}
\label{algo:gauss}
\begin{algorithmic}[1]
\State \textbf{Input:} $\tilde{X}_1,\hdots, \tilde{X}_{n_X}$,~$\lambda_{1,X}, \lambda_{1,X}', \lambda_{2,X}, \lambda_{2,X}'$,~$\overline{\eta}^X$ (all as above), and~$\alpha \in (0, 1)$.
\If{$\max(\eps_X, \eps_X') \geq 1/2$}
\State $\hat{\Pi}_G \gets [d]$
\Else \State \textbf{Initialize:}~$k \gets 0$ and~$\hat{\Pi}_0 \gets[d]$

\While{$\hat{\Pi}_k \neq \emptyset$ and $\cbr[1]{j\in \hat{\Pi}_k: |S^X_{W,j}|\leq c_{1-\alpha, \hat{\Pi}_k}} \subsetneqq \hat{\Pi}_k$}  
\State $k \gets k+1$
\State $\hat{\Pi}_k \gets \cbr[1]{j\in \hat{\Pi}_{k-1}: |S^X_{W,j}|\leq c_{1-\alpha, \hat{\Pi}_{k-1}}}$
\EndWhile
\EndIf
\State \textbf{Return:} $\hat{\Pi}_G \gets \hat{\Pi}_k$.
\end{algorithmic}
\end{algorithm}

Clearly, Algorithm~\ref{algo:gauss} terminates after at most~$d$ steps (this statement applies to all our algorithms, which are step-down methods, and will not be repeated). We now establish an upper bound on the FWER of~$\hat{\Pi}_G$ (constants~$C$ may change from statement to statement, although we may use the same symbol).
\begin{theorem}\label{thm:mainthG}
Let Assumption~\ref{ass:setting} be satisfied and fix~$\alpha \in (0, 1)$. Then,
\begin{equation}\label{eqn:fwerG}
\P\left(J(\mu^X) \not \subseteq \hat{\Pi}_G  \right) \leq \alpha  +  C \times \left( \mathfrak{A}^X_{n_X} + \mathfrak{B}^X_{n_X} \right),
\end{equation}
where $C$ is a constant depending only on~$b^X_1,b^X_2,\lambda_{1, X},\lambda_{2, X},\lambda_{1, X}',\lambda_{2, X}'$ and~$m_X$. The upper bound in~\eqref{eqn:fwerG} converges to~$\alpha$ in any asymptotic regime satisfying Assumption~\ref{ass:asyX}. 
\end{theorem}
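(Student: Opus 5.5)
We follow the standard step-down argument of \cite{romano2005exact}, which reduces strong FWER control to a single-step statement on the set of true nulls $J := J(\mu^X)$. If $J = \emptyset$ there is nothing to prove, so assume $J \neq \emptyset$. If $\max(\eps_X,\eps_X')\geq 1/2$, then $\hat{\Pi}_G=[d]\supseteq J$ and no error occurs. Otherwise, we claim the inclusion of events
\begin{equation*}
\left\{J \not\subseteq \hat{\Pi}_G\right\} \subseteq \left\{\max_{j\in J}|S^X_{W,j}| > c_{1-\alpha,J}\right\}.
\end{equation*}
To see this, let $k$ be the first step at which some $j\in J$ is removed. Then $J\subseteq\hat{\Pi}_{k-1}$ and $|S^X_{W,j}|>c_{1-\alpha,\hat{\Pi}_{k-1}}$. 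By the monotonicity \eqref{eq:MonCVG}, $c_{1-\alpha,\hat{\Pi}_{k-1}}\geq c_{1-\alpha,J}$, which gives the claim. It therefore suffices to bound $\P(\max_{j\in J}|S^X_{W,j}|>c_{1-\alpha,J})$ by $\alpha$ plus the error terms. This is a deterministic set $J$ and a deterministic critical value.

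On $J$ we have $\mu^X_j=0$, so $S^X_{W,j}=S^{\dagger,X}_{W,S,j}/\tilde\sigma^X_j$, the centered self-normalized statistic. We apply the studentized Gaussian approximation over hyperrectangles of Theorem~\ref{thm:HDGauss_studentized} to the subvector indexed by $J$. If that theorem is stated for all $d$ coordinates, it still applies, because the event $\{\max_{j\in J}|\cdot|\le t\}$ is a hyperrectangle in $\R^d$ with infinite sides off $J$. This yields
\begin{equation*}
\sup_{t}\left|\P\left(\max_{j\in J}|S^X_{W,j}|\le t\right)-\P\left(\max_{j\in J}|(\Sigma_0^{X,1/2}Z)_j|\le t\right)\right| \le C\left(\mathfrak{A}^X_{n_X}+\mathfrak{B}^X_{n_X}\right),
\end{equation*}
with $C$ depending only on the listed constants. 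We expect $\mathfrak{B}^X_{n_X}$ to be exactly the price of replacing $\tilde\sigma^X_j$ by the population scale, i.e.\ of the studentization via concentration of $\tilde\Sigma^X$ on its diagonal. Then, at $t=c_{1-\alpha,J}$, the Khatri--\v{S}id\'ak inequality \eqref{eqn:ksiquant} gives
\begin{equation*}
\P\left(\max_{j\in J}|(\Sigma_0^{X,1/2}Z)_j|\le c_{1-\alpha,J}\right)\geq \P\left(\max_{j\le |J|}|Z_j|\le c_{1-\alpha,J}\right)=1-\alpha.
\end{equation*}
Combining the last three displays yields \eqref{eqn:fwerG}.

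The main obstacle is matching the exact form of Theorem~\ref{thm:HDGauss_studentized} to this use. We must check that it covers the self-normalized statistic with the estimated $\tilde\sigma^X_j$, including the convention $S^X_{W,j}=0$ when $\tilde\sigma^X_j=0$, which has to be absorbed into a low-probability event. We must also check that its target covariance is the correlation matrix $\Sigma^X_0$ (population winsorized or plain correlation; either is fine for the \v{S}id\'ak step) and that the error rate is indeed $\mathfrak{A}^X_{n_X}+\mathfrak{B}^X_{n_X}$ uniformly over $J$. If that theorem instead uses the winsorized population variance, we would add a small anti-concentration/Nazarov-type comparison step, expected to contribute terms already dominated by $\mathfrak{A}^X_{n_X}$.

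For the asymptotic statement, under Assumption~\ref{ass:asyX} we check that each term vanishes. The middle term of $\mathfrak{A}^X_{n_X}$ vanishes by the first part of \eqref{eqn:convc}, together with $\sqrt{n\log d}\,(\log(dn)/n)^{1-1/m}\to0$, which follows from the second part. The first term of $\mathfrak{A}^X_{n_X}$ vanishes by the second part of \eqref{eqn:convc}. The $\log(d)[\cdot]^{1/2}$ terms and $\mathfrak{B}^X_{n_X}$ are bounded by powers of $\sqrt{n\log d}\,\overline\eta^{1-1/m}$ and $\log(d)/n^{(m-2)/(5m-2)}$. This is an elementary but tedious exponent comparison using $\overline\eta\le 1$, which we would carry out term by term.
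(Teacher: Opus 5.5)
Your proposal is correct and follows the paper's proof essentially step for step: you use the same reduction via the first removed true null and the monotonicity \eqref{eq:MonCVG}, the same generalized hyperrectangle with infinite sides off $J(\mu^X)$, Theorem~\ref{thm:HDGauss_studentized}, and the Khatri--\v{S}id\'ak inequality. The differences are minor. The paper needs only the one-sided bound $\P(S^X_W\in H)\geq\P(S^X_{W,S}\in H,\ \min_j\tilde\sigma^X_j>0)$, and the exceptional event is already built into the theorem (Remark~\ref{rem:div0}), so your separate handling of $\{\min_j\tilde\sigma^X_j=0\}$ and your contingency step for a population-winsorized scale are unnecessary. The paper also takes the convergence statement directly from the last part of Theorem~\ref{thm:HDGauss_studentized}, so your term-by-term exponent comparison is not needed.
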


The upper bound in~\eqref{eqn:fwerG} only depends (in addition to~$\alpha$) on population quantities via the bounds in Assumption~\ref{ass:setting}, hence establishing control of the FWER uniformly over large classes of distributions satisfying only fairly mild moment bounds. Furthermore, despite only imposing only~$m_X > 2$ moments, the bound converges to~$\alpha$ in asymptotic regimes allowing for (i)~exponential growth of~$d$ in sample size~$n_X$, and (ii)~a diverging number of contaminated observations. 

The procedure defined in Algorithm~\ref{algo:gauss} is based on worst-case Gaussian critical values that do not exploit the correlation structure of~$\Sigma_0^X$. This has computational advantages, as covariances need not be computed to conduct the algorithm (note that variances have to be computed, because the test statistic for location estimation is normalized) and the quantiles can be obtained in closed form, cf.~\eqref{eqn:Gqcf}. But, at the same time, this comes at the cost of being conservative in case the corresponding observations are strongly correlated. In this case, the critical values Algorithm~\ref{algo:gauss} is based on are too large, possibly resulting in overly large sets~$\hat{\Pi}_G$. 

This conservatism can be remedied by basing the critical values on our estimator of~$\Sigma^X_0$, which results in an (idealized, cf.~the discussion after Theorem~\ref{thm:mainthB}) algorithm with similar statistical guarantees, at the cost of increased computational complexity. The method is defined in Algorithm~\ref{algo:boot}. Note that the critical values here do \emph{not} come in closed form, but have to be numerically determined (strictly speaking exactly, cf.~the discussion and motivation of Algorithm~\ref{algo:boot2} below, which avoids this subtle issue). It is common practice to approximate the critical values numerically by simulation (making use, e.g., of the Gaussian multiplier bootstrap based on the Gram-matrix representation mentioned in Remark~\eqref{rem:gramrep})
\begin{algorithm}
\caption{Step-down procedure for testing \eqref{eq:testingproblem} based on Gaussian critical values with estimated correlation matrix}
\label{algo:boot}
\begin{algorithmic}[1]
\State \textbf{Input:} $\tilde{X}_1,\hdots, \tilde{X}_{n_X}$,~$\lambda_{1,X}, \lambda_{1,X}', \lambda_{2,X}, \lambda_{2,X}'$,~$\overline{\eta}^X$ (all as above), and~$\alpha \in (0, 1)$.
\If{$\max(\eps_X, \eps_X') \geq 1/2$}
\State $\hat{\Pi}_B \gets [d]$
\Else
\State \textbf{Initialize:}~$k \gets 0$ and~$\hat{\Pi}_0 \gets[d]$
\While{$\hat{\Pi}_k \neq \emptyset$ and $\cbr[1]{j\in \hat{\Pi}_k: |S^X_{W,j}|\leq c_{1-\alpha, \hat{\Pi}_k}(\tilde{\Sigma}^{X}_{0})} \subsetneqq \hat{\Pi}_k$}  
\State $k \gets k+1$
\State $\hat{\Pi}_k \gets \cbr[1]{j\in \hat{\Pi}_{k-1}: |S^X_{W,j}|\leq c_{1-\alpha, \hat{\Pi}_{k-1}}(\tilde{\Sigma}^{X}_{0})}$
\EndWhile
\EndIf
\State \textbf{Return:} $\hat{\Pi}_B \gets \hat{\Pi}_k$.
\end{algorithmic}
\end{algorithm}

We next establish an upper bound on the FWER of $\hat{\Pi}_B$. The proof of the following result is notably more involved than that of Theorem~\ref{thm:mainthG}. The reason is that the critical values Algorithm~\ref{algo:boot} is based on are data-dependent, while the high-dimensional Gaussian approximation results we intend to use are indexed over (data-independent) hyperrectangles. Therefore, we first relate the data-dependent critical values to data-independent critical values in the course of the proof; we here make use of Proposition~\ref{prop:BSAlwaysvalid}, cf.~also the discussion before Equation~\eqref{eq:MonCVBapprox}. 
\begin{theorem}\label{thm:mainthB}
Let Assumption~\ref{ass:setting} be satisfied and fix~$\alpha \in (0, 1)$. Then,  
\begin{equation}\label{eqn:fwerB}
\P\left(J(\mu^X) \not \subseteq \hat{\Pi}_B  \right) \leq \alpha + C \times \left( \mathfrak{A}^X_{n_X} + \mathfrak{B}^X_{n_X} + \mathfrak{C}^X_{n_X}\right),
\end{equation}
where $C$ is a constant depending only on~$b^X_1,b^X_2,\lambda_{1, X},\lambda_{2, X},\lambda_{1, X}',\lambda_{2, X}'$ and~$m_X$. The upper bound in~\eqref{eqn:fwerB} converges to~$\alpha$ in any asymptotic regime satisfying Assumption~\ref{ass:asyX}.
\end{theorem}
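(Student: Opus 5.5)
The plan follows the Romano--Wolf step-down logic, reducing the FWER event to a single event about the true nulls, and then handles the data-dependent critical values via Proposition~\ref{prop:BSAlwaysvalid}.

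\textbf{Reduction to one event.} If $\max(\eps_X,\eps_X')\geq 1/2$, then $\hat{\Pi}_B=[d]$ and nothing needs to be shown. Also, if $J:=J(\mu^X)=\emptyset$ there is nothing to prove. Otherwise, let $k^*$ be the first step at which some $j\in J$ is removed. Then $J\subseteq\hat{\Pi}_{k^*-1}$, and by the monotonicity~\eqref{eq:MonCVB} of the conditional quantiles,
\[
\cbr[1]{J\not\subseteq\hat{\Pi}_B}\subseteq \cbr[2]{\max_{j\in J}|S^X_{W,j}|>c_{1-\alpha,J}(\tilde{\Sigma}^X_0)}.
\]
The key point is that $J$ is deterministic, so only the critical value is random. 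For $j\in J$ we have $\mu^X_j=0$, so $S^X_{W,j}=S^{\dagger,X}_{W,S,j}/\tilde\sigma^X_j$, which is exactly the centered studentized statistic covered by Theorem~\ref{thm:HDGauss_studentized}.

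\textbf{Replacing the estimated critical value.} Next I invoke Proposition~\ref{prop:BSAlwaysvalid}. I expect it to give, outside an event of probability $\lesssim\mathfrak{B}^X_{n_X}$ (controlling $\max_{j,k}|\tilde{\Sigma}^X_{0,j,k}-\Sigma^X_{0,j,k}|\lesssim \sqrt{\log(dn_X)}\,\mathfrak{d}^X_{n_X}$, or a similar max-norm bound), a comparison of the form
\[
c_{1-\alpha,J}(\tilde{\Sigma}^X_0)\geq c_{1-\alpha-\delta,J}(\Sigma^X_0), \qquad \delta\lesssim \mathfrak{C}^X_{n_X}.
\]
This would come from Gaussian comparison for maxima, whose error is $(\Delta\log^2 d)^{1/2}$-type; this is consistent with $\mathfrak{C}^X_{n_X}=\log(d)\sqrt{\mathfrak{d}^X_{n_X}}$. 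The quantity $c_{1-\alpha-\delta,J}(\Sigma^X_0)$ is non-random, so on that event
\[
\P(\cdots)\leq \P\del[2]{\max_{j\in J}|S^X_{W,j}|>c_{1-\alpha-\delta,J}(\Sigma^X_0)}+C\mathfrak{B}^X_{n_X}.
\]

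\textbf{Gaussian approximation.} The event $\{\max_{j\in J}|S^X_{W,j}|\leq t\}$ is a hyperrectangle event. Theorem~\ref{thm:HDGauss_studentized} bounds the difference between its probability and $\P(\max_{j\in J}|(\Sigma^{X,1/2}_0Z)_j|\leq t)$ by $C(\mathfrak{A}^X_{n_X}+\mathfrak{B}^X_{n_X})$, uniformly in $t$. Evaluating at $t=c_{1-\alpha-\delta,J}(\Sigma^X_0)$, where the Gaussian probability equals $1-\alpha-\delta$ by continuity, yields the bound $\alpha+\delta+C(\mathfrak{A}^X_{n_X}+\mathfrak{B}^X_{n_X})$, which is~\eqref{eqn:fwerB}.

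\textbf{Asymptotic statement.} Under Assumption~\ref{ass:asyX}, one checks that each of $\mathfrak{A}^X_{n_X}$, $\mathfrak{B}^X_{n_X}$ and $\mathfrak{C}^X_{n_X}$ tends to $0$. The exponent $(m_X-2)/(5m_X-2)$ handles the first term of $\mathfrak{A}^X_{n_X}$ and makes $\log(d)\,(\log(dn_X)/n_X)^{\cdot}$ vanish. The condition $\sqrt{n_X\log d}\,\overline{\eta}_X^{1-1/m_X}\to0$ makes the $\overline{\eta}_X$ terms vanish; for instance, $\log(d)\,\overline{\eta}_X^{(1-2/m_X)/2}\to0$ follows by comparing powers, since $\log d\ll n_X$. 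This is routine exponent bookkeeping.

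The main obstacle is the second step. It requires the quantile-comparison result to hold uniformly over the random-versus-fixed index set and to yield a $\delta$ of order $\mathfrak{C}^X_{n_X}$ together with the concentration of $\tilde{\Sigma}^X_0$ under contamination. I take this from Proposition~\ref{prop:BSAlwaysvalid}, and fall back on a Chernozhukov-type comparison inequality if its form differs.
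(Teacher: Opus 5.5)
Your proposal is correct and follows the paper's proof essentially step for step. It uses the monotonicity reduction to $\max_{j\in J(\mu^X)}|S^X_{W,j}|>c_{1-\alpha,J(\mu^X)}(\tilde{\Sigma}^X_0)$, then Proposition~\ref{prop:BSAlwaysvalid} with $\delta=\mathfrak{s}_n=C^*\mathfrak{C}^X_{n_X}$ to pass to the deterministic critical value $c_{(1-\alpha-\mathfrak{s}_n)\vee 0,J(\mu^X)}(\Sigma^X_0)$, and finally Theorem~\ref{thm:HDGauss_studentized} on the resulting fixed hyperrectangle.

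Two small corrections. First, the exceptional event in Proposition~\ref{prop:BSAlwaysvalid} has probability $24/n_X\le 24\,\mathfrak{A}^X_{n_X}$, not something of order $\mathfrak{B}^X_{n_X}$; it is still absorbed into the constant. Second, that proposition is only available under condition~\eqref{eq:epscond'}, so you need the paper's complementary-case step. If~\eqref{eq:epscond'} fails, then $\mathfrak{d}^X_{n_X}$, and hence $\mathfrak{C}^X_{n_X}$, is bounded below by a constant, so the right-hand side of~\eqref{eqn:fwerB} exceeds $1$ after enlarging $C$.
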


The bound in Theorem~\ref{thm:mainthB} is similar to the one in Theorem~\ref{thm:mainthG}, but figures an additional term~$\mathfrak{C}^X_{n_X}$, stemming from an application of Proposition~\ref{prop:BSAlwaysvalid}. 

A practical disadvantage of Algorithm~\ref{algo:boot} (and also with related non-robust procedures in the literature), is that it formally requires, in each step, the \emph{exact} critical value~$c_{1-\alpha, \hat{\Pi}_k}(\tilde{\Sigma}_0^X)$. While critical values also show up in Algorithm~\ref{algo:gauss} above, the closed-form expression in~\eqref{eqn:Gqcf} can be used there (hence, ignoring numerical inaccuracies, e.g., rounding errors, this poses no problem there). The issue with Algorithm~\ref{algo:boot} is that, for general~$\tilde{\Sigma}_0^X$, there is no closed-form expression available for~$c_{1-\alpha, \hat{\Pi}_k}(\tilde{\Sigma}_0^X)$. Hence, strictly speaking, Algorithm~\ref{algo:boot} is ``idealized'' in that it is not directly implementable (or at least somewhat incomplete) as it stands. 

When implementing Algorithm~\ref{algo:boot} in practice, one has to determine the critical values numerically, e.g., by a Gaussian multiplier bootstrap based Monte Carlo method in each step~$k$. But this introduces (in the worst case~$d$) additional approximation steps, the upper bound in Theorem~\ref{thm:mainthB} is silent about: Theorem~\ref{thm:mainthB}, as is typical with bootstrap-based procedures, does not take errors in determining the critical value into account, as it works with the (idealized) Algorithm~\ref{algo:boot} based on~$c_{1-\alpha, \hat{\Pi}_k}(\tilde{\Sigma}_0^X)$. However, in a step-wise procedure, errors in determining the critical values may ``add-up'' (as opposed to a global testing problem, e.g., \cite{kprobfree}, where one only needs to determine a single critical value, and hence the computational budget for determining a single critical value is typically larger). Hence, in a multiple testing context, in particular, this subtlety deserves attention.

To provide a result for an algorithm that is feasible, without requiring exact knowledge of~$c_{1-\alpha, \hat{\Pi}_k}(\tilde{\Sigma}_0^X)$, we propose a modification of Algorithm~\ref{algo:boot}, which we state in Algorithm~\ref{algo:boot2} below. Instead of exact critical values, it incorporates explicitly the bootstrap samples and carefully chosen bootstrap (empirical) quantiles. Algorithm~\ref{algo:boot2} can therefore be viewed as one particular (feasible) implementation of Algorithm~\ref{algo:boot}. We note that the empirical quantiles it works with are \emph{not}~$(1-\alpha)$-quantiles, but \emph{require} some ``slack'' to account for the fact that the quantiles are estimated and therefore necessarily inaccurate. We briefly summarize the elementary observation underlying our construction in the following lemma (where one should think of~$B$, the number of bootstrap samples in our later applications of that lemma, as ``large''). It provides some control on the probability that a carefully chosen empirical quantile exceeds a given population quantile (recall that~$Q_F$ denotes the quantile function corresponding to a cdf~$F$ on~$\R$, cf.~Footnote~\ref{foot:qf}).
\begin{lemma}\label{lem:auxB}
Let~$z_1, \hdots, z_B$ be i.i.d.~random variables from a continuous distribution~$F$ on the real line. Denote by~$Q_{B, \beta}(\cdot)$ the quantile function of a binomial distribution with sample size~$B$ and success probability~$\beta \in (0, 1)$. Then, for every~$\beta \in (0, 1)$ and~$\gamma \in (0, 1)$ satisfying~$Q_{B, \beta}(\gamma) < B$, it holds that
\begin{equation}
\P\left(z^*_{Q_{B, \beta}(\gamma) + 1} \geq Q_F(\beta)\right) \geq \gamma.
\end{equation}
\end{lemma}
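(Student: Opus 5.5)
Set $k := Q_{B,\beta}(\gamma)$ and $q := Q_F(\beta)$, and write $N := |\{i \leq B : z_i < q\}|$. The plan is to translate the event about the order statistic into an event about $N$, and then read the bound off the binomial quantile function.

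\textbf{Step 1 (reduction to a count).} Since $z^*_{k+1}$ is the $(k+1)$-th smallest value, we have $z^*_{k+1} < q$ if and only if at least $k+1$ of the $z_i$ lie strictly below $q$. Hence
\[
\left\{z^*_{k+1} \geq q\right\} = \left\{N \leq k\right\}.
\]
The assumption $k < B$ guarantees that the index $k+1 \leq B$, so $z^*_{k+1}$ is well defined.

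\textbf{Step 2 (distribution of $N$).} The variable $N$ is Binomial$(B,p)$ with $p = \P(z_1 < q) = F(q-) = F(q)$, the last equality by continuity of $F$. For continuous $F$ we also have $F(Q_F(\beta)) = \beta$. Indeed, right-continuity of $F$ gives $F(q) \geq \beta$. Conversely, for $x < q$ we have $F(x) < \beta$ by the definition of the infimum, and continuity then gives $F(q) \leq \beta$. Hence $N \sim$ Binomial$(B,\beta)$.

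\textbf{Step 3 (conclusion).} Let $G$ denote the Binomial$(B,\beta)$ cdf. By Step 1 and Step 2,
\[
\P\left(z^*_{k+1} \geq q\right) = \P(N \leq k) = G(Q_{B,\beta}(\gamma)) \geq \gamma .
\]
The last inequality holds because $G$ is right-continuous, so the infimum defining its quantile function is attained.

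\textbf{Expected difficulty.} The only delicate point is the boundary bookkeeping. One must use strict versus non-strict inequalities correctly in Step 1, since ties among the $z_i$ or an atom at $q$ would otherwise matter. One must also justify $F(Q_F(\beta)) = \beta$; this is exactly where continuity of $F$ enters, and it is what makes $N$ exactly Binomial$(B,\beta)$. Without continuity one would only get a stochastic comparison between $N$ and a Binomial$(B,\beta)$ variable, which is not needed here.
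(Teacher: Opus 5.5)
Your proposal is correct and follows the paper's proof: the paper likewise notes that if at most $Q_{B,\beta}(\gamma)$ of the $z_i$ lie strictly below $Q_F(\beta)$, then $z^*_{Q_{B,\beta}(\gamma)+1} \geq Q_F(\beta)$, and that this count is Binomial$(B,\beta)$ because $F$ is continuous. The only difference is that you add detail the paper leaves implicit, namely equality of the two events (the paper uses only the inclusion), and explicit checks that $F(Q_F(\beta)) = \beta$ and that the binomial cdf evaluated at $Q_{B,\beta}(\gamma)$ is at least $\gamma$.
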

\begin{remark}
Note that~$z^*_{Q_{B, \beta}(\gamma) + 1}$ in Lemma~\ref{lem:auxB} constitutes the~$((Q_{B, \beta}(\gamma)  + 1)/B)$th empirical quantile of the observations~$z_1, \hdots, z_B$. Ignoring details concerning the specific observations~$z_i$ that we will work with for a moment, let us mention that we shall apply Lemma~\ref{lem:auxB} with~$\beta = (1-\alpha)$ and~$\gamma = (1-1/B)^{1/d}$, delivering that \emph{all} empirical quantiles used in Algorithm~\ref{algo:boot2} exceed the (targeted) population quantiles with high probability if~$B$ is large; cf.~also Remark~\ref{rem:empord}.
\end{remark}
In the formulation of Algorithm~\ref{algo:boot2}, we use the same notation (for binomial quantiles) as in Lemma~\ref{lem:auxB}, and, for a vector~$z \in \R^d$ and a non-empty~$A \subseteq [d]$, we let~$\|z\|_{\infty, A} := \max_{j \in A} |z_j|$.
\begin{algorithm}
\caption{Step-down procedure for testing \eqref{eq:testingproblem} based on approximated Gaussian critical values with estimated correlation matrix}
\label{algo:boot2}
\begin{algorithmic}[1]
\State \textbf{Input:} $\tilde{X}_1,\hdots, \tilde{X}_{n_X}$,~$\lambda_{1,X}, \lambda_{1,X}', \lambda_{2,X}, \lambda_{2,X}'$,~$\overline{\eta}^X$ (all as above),~$\alpha \in (0, 1)$, and~$B \in \N$ with~$Q_{B, 1-\alpha}((1-1/B)^{1/d}) < B$.
\If{$\max(\eps_X, \eps_X') \geq 1/2$}
\State $\hat{\Pi}_{B,e} \gets [d]$
\Else
\State \textbf{Initialize:}~$k \gets 0$ and~$\hat{\Pi}_0 \gets[d]$.
\State $m(B, \alpha, d) \gets Q_{B, 1-\alpha}((1-1/B)^{1/d}) + 1$.
\State Draw~$z_1^{(k)}, \hdots, z_B^{(k)}$ independently from~$\mathsf{N}_{d}(0, \tilde{\Sigma}^{X}_{0})$ (and independent of $z_j^{(l)}$ for~$j = 1, \hdots, B$ and~$l < k$).
\State $\hat{c}_{1-\alpha, \hat{\Pi}_k}(\tilde{\Sigma}^{X}_{0}) \gets$ the~$m(B, \alpha, d)$th order statistic from~$\|z_1^{(k)}\|_{\infty, \hat{\Pi}_k}, \hdots, \|z_B^{(k)}\|_{\infty, \hat{\Pi}_k}$.
\While{$\cbr[1]{j\in \hat{\Pi}_k: |S^X_{W,j}|\leq \hat{c}_{1-\alpha, \hat{\Pi}_k}(\tilde{\Sigma}^{X}_{0})} \subsetneqq \hat{\Pi}_k$}  
\State $k \gets k+1$
\State $\hat{\Pi}_k \gets \cbr[1]{j\in \hat{\Pi}_{k-1}: |S^X_{W,j}|\leq \hat{c}_{1-\alpha, \hat{\Pi}_{k-1}}\tilde{\Sigma}^{X}_{0})}$
\If{$\hat{\Pi}_k = \emptyset$} 
\State \textbf{break}
\EndIf
\State repeat steps 7 \& 8 
\EndWhile
\EndIf
\State \textbf{Return:} $\hat{\Pi}_{B,e}  \gets \hat{\Pi}_k$.
\end{algorithmic}
\end{algorithm}

\begin{remark}\label{rem:mem}
We note that instead of generating~$d$-dimensional Gaussian random vectors in Step 7 of the while loop of Algorithm~\ref{algo:boot2}, one may (in a practical implementation of that algorithm) only generate the coordinates with indices in~$\hat{\Pi}_k$. This is preferable in terms of memory once~$\hat{\Pi}_k$ is (substantially) smaller than~$[d]$. The actual sampling step may incorporate the representation from Remark~\ref{rem:gramrep}, i.e., may be Gaussian multiplier bootstrap based.
\end{remark}

\begin{remark}\label{rem:empord}
Note that instead of working with an (infeasible) quantile (or ignoring that issue and working with the same quantile but from a Monte Carlo sample), Algorithm~\ref{algo:boot2} is based on the estimate~$\hat{c}_{1-\alpha, \hat{\Pi}_k}(\tilde{\Sigma}^{X}_{0})$, which is an order statistic at the order
\begin{equation}\label{eqn:mdef}
m(B, \alpha, d) := Q_{B, 1-\alpha}((1-1/B)^{1/d}) + 1.
\end{equation}
For illustration, we tabulate~$m(B, \alpha, d)/B$ for some a practically relevant ranges of~$\alpha \in \{0.1, 0.05, 0.01\}$,~$B = \{500, 1{,}000, 5{,}000, 10{,}000, 20{,}000\}$, and~$d \in \{100, 1{,}000, 10{,}000\}$ in Tables~\ref{tab:alpha_B}-\ref{tab:alpha_B3}, from which it can be seen that those values (i)~exceed~$(1-\alpha)$ (i.e.,~$\hat{c}_{1-\alpha, \hat{\Pi}_k}(\tilde{\Sigma}^{X}_{0})$ overestimates the target~$c_{1-\alpha, \hat{\Pi}_k}(\tilde{\Sigma}^{X}_{0})$), and (ii)~get close to~$(1-\alpha)$ for~$B$ large. For the ranges considered, the values~$B = 5,000$ or~$B = 10,000$ appear to be a good compromise between minimizing computational burden and achieving~$m(B, \alpha, d)/B \approx 1-\alpha$ and~$1/B$ (cf.~the upper bound in~\eqref{eqn:fwerB2} below) being fairly negligible relative to~$\alpha$.
\begin{table}[ht]
\centering
\begin{tabular}{lccccc}
\toprule
$1-\alpha$ & $B=500$ & $B=1,000$ & $B=5,000$ & $B=10{,}000$ & $B=20{,}000$ \\
\midrule
0.9 & 0.952 & 0.939 & 0.919 & 0.914 & 0.910 \\ 
0.95 & 0.986 & 0.978 & 0.964 & 0.960 & 0.957 \\ 
0.99 &  NA & NA & 0.996 & 0.995 & 0.993 \\ 

\bottomrule
\end{tabular}
\caption{$m(B, \alpha, 100)/B$ for different values of $\alpha$ and $B$; violation of~$Q_{B, 1-\alpha}((1-1/B)^{1/d}) < B$ results in NA.}
\label{tab:alpha_B}
\end{table}

\begin{table}[ht]
\centering
\begin{tabular}{lccccc}
\toprule
$1-\alpha$ & $B=500$ & $B=1,000$ & $B=5,000$ & $B=10{,}000$ & $B=20{,}000$ \\
\midrule
0.9 & 0.958 & 0.943 & 0.921 & 0.915 & 0.911 \\ 
0.95 & 0.990 & 0.980 & 0.965 & 0.961 & 0.958 \\ 
0.99 &  NA & NA & 0.996 & 0.995 & 0.994 \\ 

\hline
\end{tabular}
\caption{$m(B, \alpha, 1,000)/B$ for different values of $\alpha$ and $B$; violation of~$Q_{B, 1-\alpha}((1-1/B)^{1/d}) < B$ results in NA.}
\label{tab:alpha_B2}
\end{table}

\begin{table}[ht]
\centering
\begin{tabular}{lccccc}
\toprule
$1-\alpha$ & $B=500$ & $B=1,000$ & $B=5,000$ & $B=10{,}000$ & $B=20{,}000$ \\
\midrule
0.9 & 0.962 & 0.947 & 0.923 & 0.917 & 0.912 \\ 
0.95 & 0.992 & 0.983 & 0.966 & 0.962 & 0.959 \\ 
0.99 & NA & NA & 0.997 & 0.995 & 0.994 \\  

\hline
\end{tabular}
\caption{$m(B, \alpha, 10{,}000)/B$ for different values of $\alpha$ and $B$; violation of~$Q_{B, 1-\alpha}((1-1/B)^{1/d}) < B$ results in NA.}
\label{tab:alpha_B3}
\end{table}

\end{remark}

Theorem~\ref{thm:mainthB2} establishes, for Algorithm~\ref{algo:boot2}, essentially the same guarantees as that established for Algorithm~\ref{algo:boot} in Theorem~\ref{thm:mainthB}. In the upper bound below, there is an additional term relating to the bootstrap sample size (and thus the accuracy of the empirical quantiles used in its construction), which in practice one typically chooses large (e.g.,~$B = 5,000$ or~$B = 10,000$, cf.~Remark~\ref{rem:empord}).

\begin{theorem}\label{thm:mainthB2}
Let Assumption~\ref{ass:setting} be satisfied. Fix~$\alpha \in (0, 1)$, and choose a natural number~$B \geq 2$ such that~$Q_{B, 1-\alpha}((1-1/B)^{1/d}) < B$. Then,  
\begin{equation}\label{eqn:fwerB2}
\P\left(J(\mu^X) \not \subseteq \hat{\Pi}_{B,e}   \right) \leq \alpha + B^{-1} + C \times \left( \mathfrak{A}^X_{n_X} + \mathfrak{B}^X_{n_X}  + \mathfrak{C}^X_{n_X} \right),
\end{equation}
where $C$ is a constant depending only on~$b^X_1,b^X_2,\lambda_{1, X},\lambda_{2, X},\lambda_{1, X}',\lambda_{2, X}'$ and~$m_X$. The upper bound in~\eqref{eqn:fwerB2} converges to~$\alpha$ in any asymptotic regime satisfying Assumption~\ref{ass:asyX} under the additional condition that~$B \to \infty$.
\end{theorem}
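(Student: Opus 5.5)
We reduce to the argument behind Theorem~\ref{thm:mainthB}, adding one high-probability event on which all empirical critical values used by Algorithm~\ref{algo:boot2} dominate the exact ones.

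\textbf{Step 1: reduce to the first true-null rejection.} If $\max(\eps_X,\eps_X')\geq 1/2$, then $\hat{\Pi}_{B,e}=[d]$ and there is nothing to prove. Otherwise, write $J=J(\mu^X)$ and assume $J\neq\emptyset$. The standard step-down argument of \cite{romano2005exact} applies. Suppose $J\not\subseteq\hat{\Pi}_{B,e}$. Let $k^*$ be the first step at which some $j\in J$ is removed. Then $J\subseteq\hat{\Pi}_{k^*}$ and $|S^X_{W,j}|>\hat{c}_{1-\alpha,\hat{\Pi}_{k^*}}(\tilde{\Sigma}^X_0)$ for that $j$.

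\textbf{Step 2: control the empirical quantiles.} Let $E$ be the event that, at every step $k$ actually executed (at most $d$ of them), $\hat{c}_{1-\alpha,\hat{\Pi}_k}(\tilde{\Sigma}^X_0)\geq c_{1-\alpha,\hat{\Pi}_k}(\tilde{\Sigma}^X_0)$. At step $k$, condition on the data and on all earlier bootstrap draws. Then $\hat{\Pi}_k$ is fixed, and the $z^{(k)}_i$ are fresh i.i.d.\ draws. The variables $\|z^{(k)}_i\|_{\infty,\hat{\Pi}_k}$ are i.i.d.\ with a continuous cdf (Footnote~\ref{foot:quantinv2}). Lemma~\ref{lem:auxB} with $\beta=1-\alpha$ and $\gamma=(1-1/B)^{1/d}$ then gives conditional probability at least $\gamma$ that the $k$-th step succeeds.

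Chain these conditional bounds over the steps. Steps that are not executed are handled by fictitiously continuing the algorithm with independent draws, so that there are always $d$ factors. This gives $\P(E)\geq\gamma^d=1-1/B$.

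\textbf{Step 3: remove the bootstrap error.} On $E$, by \eqref{eq:MonCVB} and $J\subseteq\hat{\Pi}_{k^*}$,
\[
\hat{c}_{1-\alpha,\hat{\Pi}_{k^*}}(\tilde{\Sigma}^X_0)\geq c_{1-\alpha,\hat{\Pi}_{k^*}}(\tilde{\Sigma}^X_0)\geq c_{1-\alpha,J}(\tilde{\Sigma}^X_0).
\]
Hence
\[
\P(J\not\subseteq\hat{\Pi}_{B,e})\leq B^{-1}+\P\Bigl(\max_{j\in J}|S^X_{W,j}|>c_{1-\alpha,J}(\tilde{\Sigma}^X_0)\Bigr),
\]
where $J$ is deterministic.

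\textbf{Step 4: bound the remaining probability.} This is exactly the term bounded in the proof of Theorem~\ref{thm:mainthB}. Replace $c_{1-\alpha,J}(\tilde{\Sigma}^X_0)$ by a data-independent quantile $c_{1-\alpha-\delta,J}(\Sigma^X_0)$ via Proposition~\ref{prop:BSAlwaysvalid}; this produces $\mathfrak{C}^X_{n_X}$. Then apply the studentized Gaussian approximation Theorem~\ref{thm:HDGauss_studentized} over the hyperrectangle $\{\max_{j\in J}|x_j|\leq c\}$, using $\mu^X_j=0$ for $j\in J$ so that $S^X_{W,j}$ coincides with its centered version divided by $\tilde\sigma^X_j$; this produces $\mathfrak{A}^X_{n_X}+\mathfrak{B}^X_{n_X}$. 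The result is the bound $\alpha+C(\mathfrak{A}^X_{n_X}+\mathfrak{B}^X_{n_X}+\mathfrak{C}^X_{n_X})$.

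\textbf{Step 5: asymptotics.} The convergence statement follows from the corresponding statement in Theorem~\ref{thm:mainthB} together with $B^{-1}\to0$.

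\textbf{Main obstacle.} The delicate step is the conditioning in Step 2. The sets $\hat{\Pi}_k$ are random and depend on earlier bootstrap draws, and the number of steps is random, so the bound $\gamma^d$ must be obtained by sequential conditioning together with the fictitious extension to exactly $d$ steps. Everything else is inherited from Theorem~\ref{thm:mainthB}.
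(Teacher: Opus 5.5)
Your proof is correct. It has the same overall structure as the paper's: reduce to the first true-null rejection, split off a bootstrap error of at most $B^{-1}$, then reuse the bound from the proof of Theorem~\ref{thm:mainthB}. Where it differs is in how you control the bootstrap error.

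\emph{The paper's route.} The paper never compares $\hat{c}_{1-\alpha,\hat{\Pi}_k}(\tilde{\Sigma}^X_0)$ with $c_{1-\alpha,\hat{\Pi}_k}(\tilde{\Sigma}^X_0)$ at the random sets. Instead it uses the pointwise monotonicity of empirical order statistics in the index set: $\hat{c}_{1-\alpha,\hat{\Pi}_{\hat k}}(\tilde{\Sigma}^X_0)\geq o^{(\hat k)}_{J}$. Here $o^{(k)}_{J}$ is the $m(B,\alpha,d)$th order statistic of the values $\|z^{(k)}_i\|_{\infty,J}$. Because $J$ is deterministic, the $o^{(k)}_{J}$ from the $d$ (possibly fictitious) batches are conditionally i.i.d.\ given the data. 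So $\P(\min_k o^{(k)}_{J}\geq c_{1-\alpha,J}(\tilde{\Sigma}^X_0)\mid \text{data})\geq\gamma^d$ follows from plain independence and a single application of Lemma~\ref{lem:auxB}, with no filtration argument.

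\emph{Your route.} You apply Lemma~\ref{lem:auxB} at each random set $\hat{\Pi}_k$, conditionally on $\sigma(\text{data}, z^{(0)},\dots,z^{(k-1)})$. You then chain these bounds by the tower property. Only afterwards do you pass to $J$, through the monotonicity~\eqref{eq:MonCVB} of the exact quantiles.

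This sequential conditioning is valid. Both $\hat{\Pi}_k$ and the indicator of whether step $k$ is executed are measurable with respect to that $\sigma$-field. On non-executed steps the conditional probability of success is trivially $1$, so the fictitious extension is not even needed.

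\emph{What each buys.} Your route gives a stronger by-product: with probability at least $1-B^{-1}$, every empirical critical value actually used by Algorithm~\ref{algo:boot2} dominates the exact conditional quantile at the same set. The paper's route is shorter. It sidesteps exactly the dependence of $\hat{\Pi}_k$ on earlier bootstrap draws that you identify as the main obstacle.
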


The proof combines the idea of the proof of Theorem~\ref{thm:mainthB} with Lemma~\ref{lem:auxB}, which delivers that the order statistics in Algorithm~\ref{algo:boot2} (uniformly) dominate the critical values in Algorithm~\ref{algo:boot} with probability~$1-1/B$.

\section{Two-sample multiple testing under contamination}\label{sec:ts}

\subsection{Observational setting: the two-sample case}\label{sec:tsobs}

In addition to the random vectors~$X_1, \hdots, X_{n_X}$ and the contaminated sample~$\tilde X_1, \hdots, \tilde X_{n_X}$ introduced in Section~\ref{sec:osobs}, cf.~also Assumption~\ref{ass:setting}, we let~$Y_1, \hdots, Y_{n_Y}$ be~$d$-dimensional i.i.d.~random vectors with expectation~$\mu^Y := \E(Y_1)$ and covariance matrix~$\Sigma^Y := \E((Y_1 - \mu^Y)(Y_1 - \mu^Y)')$. We suppose that the statistician, in addition to observing~$\tilde X_1, \hdots, \tilde X_{n_X}$, also observes the contaminated ``second sample''~$\tilde{Y}_{1},\hdots,\tilde{Y}_{n_Y}$ of~$d$-dimensional random vectors satisfying
\begin{equation}\label{eq:contamfracY}
\left|\cbr[1]{i\in\cbr[0]{1,\hdots,n_Y}:\tilde{Y}_{i}\neq Y_{i}}\right|
\leq
\overline{\eta}^Y n_Y.
\end{equation}
The quantity~$\overline{\eta}^Y  \in[0,1/2)$ denotes a non-random upper bound on the fraction of contaminated observations among the~$Y_1, \hdots, Y_{n_Y}$. Note that we do \emph{not} assume~$\overline{\eta}^X = \overline{\eta}^Y$ (although this is not ruled out, of course). \emph{We assume throughout that~$n_Y > 3$, recall that~$d \geq 2$ is assumed}, and summarize the observational setup concerning the random vectors introduced above, together with some moment assumptions, as follows. Note that the following assumption just replaces~``$X$'' by~``$Y$'' at every occurrence in Assumption~\ref{ass:setting}, but we re-state the assumption in the following version for completeness.
\begin{assumption}\label{ass:settingY}
The~$Y_1,\hdots,Y_{n_Y}$ are i.i.d.~random vectors in~$\R^d$ with~$\E |Y_{1,j}|^{m_Y}<\infty$ for some~$m_Y\in(2,\infty)$ and all~$j=1,\hdots,d$,~$\mu^Y := \E(Y_1)$, and~$\Sigma^Y := \E((Y_1 - \mu^Y)(Y_1 - \mu^Y)')$. There exist strictly positive constants~$b^Y_1$ and~$b^Y_2$ such that~$\min_{j=1,\hdots,d}\sigma^Y_{2,j}\geq b^Y_1$ and $\sigma^Y_{m_Y}:=\max_{j=1,\hdots,d}\sigma^Y_{m_Y,j}\leq b^Y_2$. The actually observed adversarially contaminated random vectors (in~$\R^d$) are denoted~$\tilde{Y}_1,\hdots,\tilde{Y}_{n_Y}$ and satisfy~\eqref{eq:contamfracY}.	
\end{assumption}
At several instances, we also assume that the two samples available to the statistician are independent.
\begin{assumption}\label{ass:indep}
The samples~$(\tilde{X}_1, \hdots, \tilde{X}_{n_X})$ and~$(\tilde{Y}_1, \hdots, \tilde{Y}_{n_Y})$ are independent.
\end{assumption}
We note that the dimension~$d$ is common between the two samples. The reason is that we are interested in testing hypotheses on the underlying population expectations, which is only reasonable if the dimensions in the two samples coincide (otherwise, one may, of course, delete uncommon coordinates to start with). More specifically, in the adversarially contaminated two-sample setting described above, we consider the multiple testing problem
\begin{equation}\label{eq:testingproblemts}
\mathsf{H}_{0,j}: \mu^X_j = \mu^Y_j \qquad\text{vs.}\qquad \mathsf{H}_{1, j}: \mu^X_j \neq \mu^Y_j, \qquad \text{ for } j = 1, \hdots, d.
\end{equation}
Note that it is \emph{not} assumed that the covariance matrices~$\Sigma^X$ and~$\Sigma^Y$ coincide. We take care of this possibility in covariance estimation, where, in case it is \emph{known} that~$\Sigma^X = \Sigma^Y$, we recommend using a \emph{pooled} covariance estimator, whereas in case it is unknown whether or not~$\Sigma^X = \Sigma^Y$ we recommend another estimator.

Denoting~$\Delta := \mu^X - \mu^Y$ the multiple testing problem~\eqref{eq:testingproblemts} can  equivalently be stated in the following, somewhat more convenient, form:
\begin{equation}\label{eq:testingproblemts2}
\mathsf{H}_{0,j}: \Delta_j = 0 \qquad\text{vs.}\qquad \mathsf{H}_{1, j}: \Delta_j \neq 0, \qquad \text{ for } j = 1, \hdots, d.
\end{equation}

We denote~$J(\Delta) := \{j \in [d] : \Delta_j = 0\}$.

Similar to what was done in Section~\ref{sec:onesample}, in addition to our finite sample bounds, which mainly rely on Assumptions~\ref{ass:setting},~\ref{ass:settingY}, and~\ref{ass:indep}, we also formulate some asymptotic convergence statements in the following regime:
\begin{assumption}\label{ass:asyXY}
It holds that~$\min(n_X, n_Y) \to \infty$, and~$d = d(n_X, n_Y)$,~$\overline{\eta}_X = \overline{\eta}_X(n_X)$, and~$\overline{\eta}_Y = \overline{\eta}_Y(n_Y)$ satisfy $$\sqrt{n_X\log(d)}\overline{\eta}_X^{1-\frac{1}{m_X}} \vee \sqrt{n_Y\log(d)}\overline{\eta}_Y^{1-\frac{1}{m_Y}} \vee \log(d)/n_X^{\frac{m_X-2}{5m_X-2}} \vee \log(d)/n_Y^{\frac{m_Y-2}{5m_Y-2}} \to 0.$$
\end{assumption}

\subsection{Gaussian approximations and covariance estimation in the two-sample case and related quantiles}

In the two-sample case, our multiple testing procedures are based on winsorized estimators for the unknown population expectations and covariance matrices that are first computed in each of the two samples separately, and are then suitably combined. The estimators for the second sample are obtained in the same way as those in the one-sample case. More specifically, to obtain the estimators we do the following:
\begin{enumerate}
\item We define~$\eps_Y$ and~$\eps_Y'$ analogously to~$\eps_X$ and~$\eps_X'$ (which were defined in Equation~\eqref{eq:epsfamx}), but now based on~$\lambda_{1, Y}$ and~$\lambda_{1, Y}'$ in $(1, \infty)$ as well as~$\lambda_{2, Y}$ and~$\lambda_{2, Y}'$ in~$(0, \infty)$, and replacing~$n_X$ and~$\overline{\eta}^X$ by~$n_Y$ and~$\overline{\eta}^Y$, respectively. 
\item We define~$S^{\dagger, Y}_{W}$, $S^{\dagger, Y}_{W, S}$, and~$\tilde{\Sigma}^Y$ (including the variance estimators~$\tilde{\sigma}_{j}^Y$) and the corresponding winsorization points~$\hat{\alpha}_j^Y$,~$\hat{\beta}_j^Y$,~$\hat{a}_j^Y$,~$\hat{b}_j^Y$, analogously to~$S^{\dagger, X}_{W}$,~$S^{\dagger, X}_{W, S}$, and~$\tilde{\Sigma}^X$ (including the variance estimators~$\tilde{\sigma}_{j}^X$) and the corresponding winsorization points~$\hat{\alpha}_j^X$, $\hat{\beta}_j^X$,~$\hat{a}_j^X$,~$\hat{b}_j^X$, which were defined in~\eqref{eqn:winsp}-\eqref{eq:tildeSigma}, but now based on the quantities~$\tilde{Y}_1, \hdots, \tilde{Y}_{n_Y}$,~$\eps_Y$,~$\eps_Y'$,~$\overline{\eta}^Y$, and~$n_Y$.
\end{enumerate}
Concerning the choice of the tuning parameters~$\lambda_{1, Y}$ and~$\lambda_{1, Y}'$ in $(1, \infty)$ as well as~$\lambda_{2, Y}$ and~$\lambda_{2, Y}'$ in~$(0, \infty)$, we make the same practical recommendations as in Remark~\ref{rem:tpch}. 

Denoting~$$w_X := \sqrt{n_X/(n_X + n_Y)} > 0 \quad \text{ and } \quad w_Y := \sqrt{n_Y/(n_X + n_Y)} > 0,$$ we first define our winsorized estimator, and the corresponding centered version, as
\begin{equation}\label{eqn:sdagdelt}
S^{\dagger, \Delta}_{W} :=  w_Y S^{\dagger, X}_{W} - w_XS^{\dagger, Y}_{W} \quad \text{ and } \quad S^{\dagger, \Delta}_{W, S} := w_Y S^{\dagger, X}_{W, S} - w_X S^{\dagger, Y}_{W, S}.
\end{equation}
Note that~$w_X^2 + w_Y^2 = 1$, and that~$S^{\dagger, \Delta}_{W} - S^{\dagger, \Delta}_{W, S}$ is proportional to~$\Delta$, i.e., 
\begin{equation}\label{eqn:equiv}
S^{\dagger, \Delta}_{W} - S^{\dagger, \Delta}_{W, S} = w_{XY} \Delta, \quad \text{ for } \quad w_{XY} := \sqrt{(n_X n_Y)/(n_X + n_Y)}.
\end{equation}
This proportionality condition is fundamental, because~$S^{\dagger, \Delta}_{W, S}$ (for which limiting results will be developed below) is of course \emph{infeasible} (it incorporates~$\mu^X$ and~$\mu^Y$, which are unknown), whereas~\eqref{eqn:equiv} guarantees that under the null hypothesis~$\mathsf{H}_{0, j}$ it holds that the feasible~$S^{\dagger, \Delta}_{W, j} = S^{\dagger, \Delta}_{W, S, j}$. This relation also carries over to the properly normalized test statistics,~cf.~\eqref{eqn:coincts} below, cf.~Lemma~\ref{lem:coincts}.
\begin{remark}\label{rem:mxmy}
We here comment on the weights~$w_Y$ and~$w_X$ showing up in the above definitions, and which were not needed in the one-sample case. The properties (i)~positivity of~$w_X$ and~$w_Y$, (ii)~$w_X^2 + w_Y^2 = 1$, and (iii)~$S^{\dagger, \Delta}_{W, S} - S^{\dagger, \Delta}_{W}$ being proportional to~$\Delta$, i.e., $w_Y \sqrt{n_X} = w_X \sqrt{n_Y}$, together imply the particular form of the weights~$w_X$ and~$w_Y$ chosen above. While the positivity and proportionality conditions are important in the context of testing the hypotheses in~\eqref{eq:testingproblemts2}, the normalization condition~$w_X^2 + w_Y^2 = 1$ is, of course, completely arbitrary (although notationally convenient in the context of covariance estimation). It is reassuring that the choice of normalization is irrelevant for our purpose, as we shall base our testing procedures on \emph{normalized} test statistics, which are invariant to that choice.\footnote{Other common normalization conditions are: (1)~``$w_X^2 + w_Y^2 = n^{-1}_X + n^{-1}_Y$'', which under (i) and (ii) would result in the weights~``$w_X = n_Y^{-1/2}$'' and~``$w_Y = n_X^{-1/2}$''; or (2)~``$w_X^2 = 1$'', which under (i) and (ii) would result in the weights~``$w_Y = \sqrt{n_Y/n_X}$ and~$w_X = 1$''. Both normalizations have been frequently used (at least implicitly) in the definitions of two-sample test statistics for high-dimensional mean testing based on \emph{non-robust} location estimators, cf., e.g., Section 4 of~\cite{huang2022overview}. We have normalized the weights in a way that is most convenient for our purpose in terms of notation.}
\end{remark}

\subsubsection{Covariance estimation}\label{sec:covestts}

As in the one-sample case, we do not base our multiple testing procedures on the (non-normalized) winsorized means~$S^{\dagger, \Delta}_{W}$ directly. The reason, again, is that using a test statistic that is not normalized could be detrimental in situations where the variances of the coordinates differ substantially. To normalize the coordinates of~$S^{\dagger, \Delta}_{W}$, we estimate (cf.~Theorem~\ref{thm:HDGaussts}, where the subsequent quantity appears as the covariance matrix of an approximating Gaussian) $$\Sigma^{\Delta} := w^2_Y \Sigma^X + w^2_X\Sigma^Y$$ via
\begin{equation}\label{eqn:covesttspo}
\tilde{\Sigma}^{\Delta} := \begin{cases} w^2_Y \tilde{\Sigma}^X + w^2_X\tilde{\Sigma}^Y, & \text{ in case it is possible that } \Sigma^X \neq \Sigma^Y, \\
w_X^2 \tilde{\Sigma}^X + w_Y^2 \tilde{\Sigma}^Y, & \text{ in case it is known that } \Sigma^X = \Sigma^Y.
\end{cases}
\end{equation}
This choice requires some discussion: on the one hand, in case it is known that~$\Sigma^X = \Sigma^Y$ (which, due to our normalization of the weights, cf.~Remark~\ref{rem:mxmy}, then also coincide with~$\Sigma^{\Delta}$), we would use a \emph{pooled} covariance estimator (cov.~est.). Recalling~\eqref{eq:tildeSigma}, this pooled estimator equals
\begin{align*}
\tilde{\Sigma}^{\Delta}_{j,k} 
&=
w_X^2 \times n_X^{-1}\sum_{i=1}^{n_X}\sbr[1]{\phi_{\hat{a}^X_j,\hat{b}^X_j}(\tilde{X}_{i,j})- \tilde{\mu}^X_{j}}\sbr[1]{\phi_{\hat{a}^X_k,\hat{b}^X_k}(\tilde{X}_{i,k})-\tilde{\mu}^X_{k}} \\ & \hspace{3cm} + 
w_Y^2 \times n_Y^{-1}\sum_{i=1}^{n_Y}\sbr[1]{\phi_{\hat{a}^Y_j,\hat{b}^Y_j}(\tilde{Y}_{i,j})-\tilde{\mu}^Y_{j}}\sbr[1]{\phi_{\hat{a}^Y_k,\hat{b}^Y_k}(\tilde{Y}_{i,k})-\tilde{\mu}^Y_{k}} \\
&= (n_X + n_Y)^{-1} \sum_{V \in \{X, Y\}} \sum_{i = 1}^{n_V} 
\sbr[1]{\phi_{\hat{a}^V_j,\hat{b}^V_j}(\tilde{V}_{i,j})-\tilde{\mu}^V_{j}}\sbr[1]{\phi_{\hat{a}^V_k,\hat{b}^V_k}(\tilde{V}_{i,k})-\tilde{\mu}^V_{k}}.
\end{align*}
On the other hand, in situations where it is unknown whether the covariance matrices in the two samples coincide or not, we do \emph{not} use a pooled estimator. We then weight the two summands in~$\tilde{\Sigma}^{\Delta}$ in exact correspondence to the weights used for location estimation~\eqref{eqn:sdagdelt}, cf.~\eqref{eqn:covesttspo}. [We note that in the special case where~$n_X = n_Y$, so that~$w_X = w_Y$, the two expressions in the two cases in~\eqref{eqn:covesttspo} coincide, regardless of whether~$\Sigma^X = \Sigma^Y$ or~$\Sigma^X \neq \Sigma^Y$.] Let~$\tilde{\sigma}_{j}^{\Delta}$ denote the square root of the~$j$-th diagonal entry of~$\tilde{\Sigma}^{\Delta}$, and denote by~$\sigma_{2,j}^{\Delta}$ the square root of the~$j$-th diagonal element of~$\Sigma^{\Delta}.$

The covariance estimator~$\tilde{\Sigma}^{\Delta}$ defined above satisfies the following precision guarantees. Note that the independence condition formulated in Assumption~\ref{ass:indep} is \emph{not} needed here (but also note that in case it is violated, the quantity~$\Sigma^{\Delta}$, as defined above, may no longer be a relevant target!).
\begin{proposition}\label{prop:covestimGRAM2s}
Let Assumptions~\ref{ass:setting} and~\ref{ass:settingY} be satisfied.  If 
\begin{equation}\label{eqn:epscond}
\max_{V \in \{X, Y\}} \left[2\eps'_V +\frac{\log(d^2n_V)}{n_V}+\sqrt{\del[2]{\frac{\log(d^2n_V)}{n_V}}^2+4\frac{\log(d^2n_V)}{n_V}\eps_V'} \right]<1,
\end{equation}
then, for a constant~$C$ depending only on~$b^V_2,\lambda_{1,V}',\lambda_{2,V}'$ and~$m_V$ for~$V \in \{X, Y\}$, 
\begin{equation}\label{eq:covestimGramts}
\P\del[4]{\max_{1\leq j,k\leq d}\envert[1]{\tilde{\Sigma}^{\Delta}_{j,k}-\Sigma^{\Delta}_{j,k}}> C \mathfrak{d}^{\Delta}  }
\leq  24 \times  \frac{n_Y + n_X}{n_Xn_Y},
\end{equation}	
where~
\begin{equation}\label{eqn:ddefts}
\mathfrak{d}^{\Delta} 
:= 
\begin{cases}
w_Y^2\mathfrak{d}_{n_X}^X + w_X^2 \mathfrak{d}_{n_Y}^Y, & \text{ if the non-pooled cov.~est. is used}, \\
w_X^2 \mathfrak{d}_{n_X}^X + w_Y^2 \mathfrak{d}_{n_Y}^Y, & \text{ if~$\Sigma^X = \Sigma^Y$ and the pooled cov.~est. is used.}
\end{cases}
\end{equation}
\end{proposition}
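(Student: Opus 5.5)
The plan is to reduce the statement to a one-sample covariance concentration result for each sample separately, and then combine the two by a union bound. The one-sample result is the Gram-type covariance bound from \cite{kphdapprox}/\cite{kprobfree}, which underlies the term $\mathfrak{d}^X_{n_X}$ and Proposition~\ref{prop:BSAlwaysvalid}. For $V\in\{X,Y\}$, I would invoke it in the following form: under Assumption~\ref{ass:setting} (respectively Assumption~\ref{ass:settingY}), and provided
\[
2\eps'_V+\tfrac{\log(d^2n_V)}{n_V}+\sqrt{(\log(d^2n_V)/n_V)^2+4\eps_V'\log(d^2n_V)/n_V}<1,
\]
there is a constant $C_V$, depending only on $b^V_2,\lambda'_{1,V},\lambda'_{2,V},m_V$, such that
\[
\P\bigl(\max_{j,k}|\tilde{\Sigma}^V_{j,k}-\Sigma^V_{j,k}|>C_V\mathfrak{d}^V_{n_V}\bigr)\leq 12/n_V .
\]
Condition~\eqref{eqn:epscond} is precisely the conjunction of these two one-sample conditions. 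This explains the shape of the assumption and the absence of Assumption~\ref{ass:indep}, since only marginal statements about each sample are used. If the available one-sample statement is phrased with a different numerical constant in the probability bound, I would adjust it accordingly. The factor $24=2\cdot 12$ suggests the one-sample bound is $12/n_V$.

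Let $E_V$ be the event that $\max_{j,k}|\tilde{\Sigma}^V_{j,k}-\Sigma^V_{j,k}|\le C_V\mathfrak{d}^V_{n_V}$. I would treat the two cases separately.

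\emph{Non-pooled case.} By the triangle inequality and $\Sigma^\Delta=w_Y^2\Sigma^X+w_X^2\Sigma^Y$, we have, for all $j,k$,
\[
|\tilde\Sigma^\Delta_{j,k}-\Sigma^\Delta_{j,k}|\le w_Y^2|\tilde\Sigma^X_{j,k}-\Sigma^X_{j,k}|+w_X^2|\tilde\Sigma^Y_{j,k}-\Sigma^Y_{j,k}|.
\]
On $E_X\cap E_Y$, the right-hand side is at most $C(w_Y^2\mathfrak{d}^X_{n_X}+w_X^2\mathfrak{d}^Y_{n_Y})=C\mathfrak{d}^\Delta$ with $C=\max(C_X,C_Y)$.

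\emph{Pooled case.} Here $\Sigma^X=\Sigma^Y=\Sigma^\Delta$, because $w_X^2+w_Y^2=1$. Hence $\tilde\Sigma^\Delta-\Sigma^\Delta=w_X^2(\tilde\Sigma^X-\Sigma^X)+w_Y^2(\tilde\Sigma^Y-\Sigma^Y)$, and the same argument gives the bound $C(w_X^2\mathfrak{d}^X_{n_X}+w_Y^2\mathfrak{d}^Y_{n_Y})$.

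Finally, by the union bound,
\[
\P\bigl((E_X\cap E_Y)^c\bigr)\le 12/n_X+12/n_Y=12\,\frac{n_X+n_Y}{n_Xn_Y}\le 24\,\frac{n_X+n_Y}{n_Xn_Y},
\]
which is the claim. The constant $C$ depends only on the listed quantities.

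I expect the main obstacle to be bookkeeping rather than new mathematics: checking that the one-sample covariance concentration result is stated, or can be restated, with exactly the side condition appearing in \eqref{eqn:epscond}, with the rate $\mathfrak{d}^V_{n_V}$, and with a probability bound of order $1/n_V$ and constant at most $24$. If the cited result instead bounds the normalized quantity $\tilde\Sigma_0$, or uses a deviation on a different scale, I would go back to its proof. There, the entrywise deviation is obtained via a union bound over $d^2$ pairs, with level $\log(d^2n_V)/n_V$, from winsorized-product concentration together with the bias bound $\eps_V'^{1-2/m_V}$. I would extract the unnormalized entrywise statement from that proof before combining the two samples as above.
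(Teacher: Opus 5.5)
Your proof is correct and follows the paper's argument: per-sample entrywise covariance concentration from Theorem 3.1 of \cite{kphdapprox}, then the triangle inequality with weights $(w_Y^2,w_X^2)$ in the non-pooled case and $(w_X^2,w_Y^2)$ in the pooled case (using $\Sigma^X=\Sigma^Y=\Sigma^{\Delta}$), then a union bound. One small correction, which your hedge already anticipates: the cited one-sample bound fails with probability at most $24/n_V$, not $12/n_V$, so the factor $24$ is not $2\cdot 12$, and the union bound gives exactly $24/n_X+24/n_Y=24\,(n_X+n_Y)/(n_Xn_Y)$.
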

Given our covariance estimator, we can define the normalized test statistic~$S^{\Delta}_{W}$ our multiple testing procedures are based on, together with its centered version~$S^{\Delta}_{W, S}$, via their coordinates~$j = 1, \hdots, d$ as follows
\begin{equation}\label{eqn:sdagdeltN}
S^{\Delta}_{W, j} := \frac{S^{\dagger, \Delta}_{W, j}}{\tilde{\sigma}_{j}^{\Delta}} = \frac{w_Y S^{\dagger, X}_{W, j} - w_X S^{\dagger, Y}_{W, j}}{\tilde{\sigma}_{j}^{\Delta}} ~ \text{ and } ~
S^{\Delta}_{W, S, j} := \frac{S^{\dagger, \Delta}_{W, S, j}}{\tilde{\sigma}_{j}^{\Delta}} = \frac{w_Y S^{\dagger, X}_{W, S, j} - w_X S^{\dagger, Y}_{W, S, j}}{\tilde{\sigma}_{j}^{\Delta}},
\end{equation}
and we set~$S^{\Delta}_{W, j} = 0$ in case~$\tilde{\sigma}_{j}^{\Delta} = 0$, but leave~$S^{\Delta}_{W, S}$ undefined in that case. 

The following lemma is an immediate consequence of the \emph{proportionality requirement} on the weights~$w_X$ and~$w_Y$ used in the construction of the two-sample winsorized statistics; cf.~the discussion around~\eqref{eqn:equiv} and Remark~\ref{rem:mxmy}. It plays a subtle, but crucial role when applying the Gaussian approximation results (from the subsequent section) to obtain guarantees for our testing procedures: It relates the feasible, but non-centered test statistic~$S^{\Delta}_{W}$, which our testing procedures will be based on, to the centered, but \emph{infeasible} one~$S^{\Delta}_{W, S}$, for which we shall obtain Gaussian approximation guarantees. Lemma~\ref{lem:coincts} therefore is a tool that allows us to ``carry over'' approximations from the centered to the non-centered case, at least for coordinates~$j$ where~$\Delta_j = 0$. [A related statement underlies the results in the one-sample case, but that result is more obvious as no weighting satisfying particular requirements need to be introduced (cf.~Equation~\eqref{eqn:coinc} in Appendix~\ref{app:aux}).]
\begin{lemma}\label{lem:coincts}
If~$j \in J(\Delta) = \{j \in [d] : \Delta_j = 0\}$ and~$\tilde{\sigma}^{{\Delta}}_{j} \neq 0$, it holds that 
\begin{equation}\label{eqn:coincts}
S^{\Delta}_{W,S,j} = S^{\Delta}_{W,j}.
\end{equation}
\end{lemma}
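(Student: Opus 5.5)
The identity follows directly from the definitions. Fix $j \in J(\Delta)$ with $\tilde{\sigma}^{\Delta}_{j} \neq 0$. Both $S^{\Delta}_{W,j}$ and $S^{\Delta}_{W,S,j}$ are then defined by the quotient formulas in~\eqref{eqn:sdagdeltN}: the convention $S^{\Delta}_{W,j}=0$ does not apply, and $S^{\Delta}_{W,S,j}$ is well defined. They share the denominator $\tilde{\sigma}^{\Delta}_{j}$. It therefore suffices to show that the numerators agree, namely $S^{\dagger,\Delta}_{W,j} = S^{\dagger,\Delta}_{W,S,j}$.

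The first step is to compare the one-sample statistics coordinatewise. By~\eqref{eqn:winsmeandef} and~\eqref{eqn:winsmeandefc},
\begin{equation*}
S^{\dagger,X}_{W,j} - S^{\dagger,X}_{W,S,j} = n_X^{-1/2}\sum_{i=1}^{n_X}\mu^X_j = \sqrt{n_X}\,\mu^X_j .
\end{equation*}
The $Y$-statistics are defined analogously, so $S^{\dagger,Y}_{W,j} - S^{\dagger,Y}_{W,S,j} = \sqrt{n_Y}\,\mu^Y_j$.

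The second step combines these with the weights from~\eqref{eqn:sdagdelt}:
\begin{equation*}
S^{\dagger,\Delta}_{W,j} - S^{\dagger,\Delta}_{W,S,j} = w_Y\sqrt{n_X}\,\mu^X_j - w_X\sqrt{n_Y}\,\mu^Y_j .
\end{equation*}
By the proportionality property $w_Y\sqrt{n_X} = w_X\sqrt{n_Y} = \sqrt{n_Xn_Y/(n_X+n_Y)} = w_{XY}$, this equals $w_{XY}(\mu^X_j-\mu^Y_j) = w_{XY}\Delta_j$. This is exactly~\eqref{eqn:equiv} read coordinatewise.

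Since $\Delta_j = 0$ for $j \in J(\Delta)$, the numerators coincide. Dividing by $\tilde{\sigma}^{\Delta}_{j} \neq 0$ gives~\eqref{eqn:coincts}.

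There is no real obstacle here. The only point needing care is checking that both quotient definitions are in force, which is exactly what the hypothesis $\tilde{\sigma}^{\Delta}_{j}\neq 0$ ensures, and verifying the weight identity $w_Y\sqrt{n_X}=w_X\sqrt{n_Y}$.
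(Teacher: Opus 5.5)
Your proposal is correct and follows the same approach as the paper: equal numerators via \eqref{eqn:equiv} with $\Delta_j = 0$, then division by the common nonzero denominator $\tilde{\sigma}^{\Delta}_{j}$ in \eqref{eqn:sdagdeltN}. The only difference is that you also derive \eqref{eqn:equiv} from the definitions and the weight identity $w_Y\sqrt{n_X} = w_X\sqrt{n_Y} = w_{XY}$, whereas the paper simply cites it.
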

\subsubsection{Two-sample Gaussian approximation results}

We next establish a Gaussian approximation inequality for the distribution of~$S^{\dagger, \Delta}_{W}$. It constitutes a two-sample analogue of Theorem 2.1 in \cite{kphdapprox}, cf.~Theorem~\ref{thm:HDGauss} in Appendix~\ref{app:aux}, which built the basis for the theoretical results in Section~\ref{sec:onesample}. We define the quantity~$\mathfrak{A}_{n_Y}^Y$ analogously to~$\mathfrak{A}_{n_X}^X$, which was defined in~\eqref{eqn:abcdef}.
\begin{theorem}\label{thm:HDGaussts}
Let Assumptions~\ref{ass:setting},~\ref{ass:settingY}, and~\ref{ass:indep} be satisfied.  If~$\eps_X,\eps_Y \in(0,1/2)$, then, for~$Z\sim\mathsf{N}_d(0,\Sigma^{\Delta})$ and~$C^{\Delta}$ a constant depending only on~$b^V_1,b^V_2$, $\lambda_{1,V},\lambda_{2,V}$ and~$m_V$ for~$V \in \{X, Y\}$,
\begin{equation}\label{eq:HDGaussts}
\rho^{\Delta}_{W}
:=
\sup_{H\in\mc{H}}\envert[2]{\P\del[1]{S^{\dagger, \Delta}_{W, S} \in H}-\P\del[1]{Z\in H}}
\leq
C^{\Delta} \times \left(\mathfrak{A}^X_{n_X} + \mathfrak{A}^Y_{n_Y} \right).
\end{equation}
In particular,~$\rho^{\Delta}_{W} \to 0$ in any asymptotic regime satisfying Assumption~\ref{ass:asyXY}.
\end{theorem}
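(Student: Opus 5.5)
The plan is to reduce the two-sample statement to two applications of the one-sample Gaussian approximation, Theorem~\ref{thm:HDGauss} in Appendix~\ref{app:aux}, using a conditioning (``swapping one sample at a time'') argument. By definition~\eqref{eqn:sdagdelt}, $S^{\dagger, \Delta}_{W, S} = w_Y S^{\dagger, X}_{W, S} - w_X S^{\dagger, Y}_{W, S}$. The first summand is a measurable function of $(\tilde X_1,\hdots,\tilde X_{n_X})$ and the second of $(\tilde Y_1,\hdots,\tilde Y_{n_Y})$, so by Assumption~\ref{ass:indep} the two summands are independent.

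The input I need from Theorem~\ref{thm:HDGauss} is the following. Under Assumption~\ref{ass:setting} and $\eps_X\in(0,1/2)$,
\begin{equation*}
\rho_X:=\sup_{H\in\mc H}\bigl|\P(S^{\dagger,X}_{W,S}\in H)-\P(Z_X\in H)\bigr|\leq C_X\,\mathfrak{A}^X_{n_X},
\end{equation*}
where $Z_X\sim\mathsf N_d(0,\Sigma^X)$ and $C_X$ depends only on $b^X_1,b^X_2,\lambda_{1,X},\lambda_{2,X},m_X$. The analogous bound $\rho_Y \leq C_Y\,\mathfrak{A}^Y_{n_Y}$ holds for the $Y$-sample with $Z_Y\sim\mathsf N_d(0,\Sigma^Y)$. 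I take $Z_X$ and $Z_Y$ independent of each other and of the data.

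The structural fact I use is that the class $\mc H$ of hyperrectangles $\prod_j[a_j,b_j]$ (with possibly infinite endpoints, or the corresponding version with open or half-open sides) is invariant under coordinatewise affine maps $x\mapsto c\,x+v$ with a scalar $c\neq 0$. For $c<0$ the map reverses each interval, but the image is still a product of intervals.

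Fix $H\in\mc H$. The key steps are as follows.

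\begin{enumerate}
\item Condition on the $Y$-sample. Then
\begin{equation*}
\{w_Y S^{\dagger,X}_{W,S}-w_X S^{\dagger,Y}_{W,S}\in H\}=\{S^{\dagger,X}_{W,S}\in w_Y^{-1}(H+w_X S^{\dagger,Y}_{W,S})\},
\end{equation*}
and the set on the right is a hyperrectangle depending only on the $Y$-data. By independence and Fubini, the conditional probability differs from $\P(Z_X\in w_Y^{-1}(H+w_XS^{\dagger,Y}_{W,S})\mid \tilde Y)$ by at most $\rho_X$. Taking expectations, replacing $S^{\dagger,X}_{W,S}$ by $Z_X$ costs at most $\rho_X$.

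\item Condition on $Z_X$. The event $\{w_YZ_X-w_XS^{\dagger,Y}_{W,S}\in H\}$ equals $\{S^{\dagger,Y}_{W,S}\in -w_X^{-1}(H-w_YZ_X)\}$, which is again a hyperrectangle event. Hence replacing $S^{\dagger,Y}_{W,S}$ by $Z_Y$ costs at most $\rho_Y$.

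\item Note that $w_YZ_X-w_XZ_Y\sim\mathsf N_d(0,w_Y^2\Sigma^X+w_X^2\Sigma^Y)=\mathsf N_d(0,\Sigma^\Delta)$ by independence. Taking the supremum over $H$ gives
\begin{equation*}
\rho^\Delta_W\leq\rho_X+\rho_Y\leq C^\Delta(\mathfrak{A}^X_{n_X}+\mathfrak{A}^Y_{n_Y})
\end{equation*}
with $C^\Delta=\max(C_X,C_Y)$, which has the claimed dependence on the constants.
\end{enumerate}

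For the asymptotic statement, I would reuse the argument behind the asymptotic part of Theorem~\ref{thm:mainthG}, which shows $\mathfrak{A}^X_{n_X}\to0$ under~\eqref{eqn:convc}. Assumption~\ref{ass:asyXY} imposes exactly those conditions on both samples separately. In Assumption~\ref{ass:asyX}, $d$ depends only on $n_X$, whereas here it depends on $(n_X,n_Y)$. This is harmless, because the three terms of $\mathfrak{A}^X_{n_X}$ are bounded explicitly in terms of the two quantities appearing in Assumption~\ref{ass:asyXY}, via $\log(dn_X)\leq 2\max(\log d,\log n_X)$. The terms with $n_X^{-(1-1/m_X)}$ powers are negligible because $m_X>2$.

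The main obstacle I anticipate is not the swapping argument itself but the measure-theoretic details. First, I must check that the one-sample theorem, as restated in Appendix~\ref{app:aux}, applies to the \emph{unnormalized} centered statistic $S^{\dagger,X}_{W,S}$ with target covariance $\Sigma^X$, rather than only to a normalized version. Second, I must check that it is uniform over \emph{all} hyperrectangles, including degenerate ones and those with infinite sides. Only then do the data-dependent translated and scaled rectangles in steps 1 and 2 fall inside the class over which the supremum is taken. Third, conditioning on the contaminated $Y$-sample requires that the adversary acting on the $X$-sample does not depend on the $Y$-data, which is precisely what Assumption~\ref{ass:indep} guarantees.
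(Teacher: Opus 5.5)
Your proposal is correct and follows essentially the same route as the paper. It uses the same two-step swap: first replace $S^{\dagger,X}_{W,S}$ by a $\mathsf{N}_d(0,\Sigma^X)$ vector conditionally on the $Y$-part, then replace $S^{\dagger,Y}_{W,S}$ by a $\mathsf{N}_d(0,\Sigma^Y)$ vector conditionally on that Gaussian, exploiting that $\mc{H}$ is closed under coordinatewise affine maps and applying Theorem~\ref{thm:HDGauss} to each sample, so that $\rho^{\Delta}_W \leq \rho^X_W + \rho^Y_W$. The paper phrases the conditioning through regular conditional distributions given $w_X S^{\dagger,Y}_{W,S}$ and given $w_Y Z_1$, which is the same as your Fubini argument.
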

The proof of Theorem~\ref{thm:HDGaussts} is based on a conditioning argument, exploiting in particular Assumption~\ref{ass:indep}, allowing us to carry over the one-sample Gaussian approximation results from Theorem 2.1 in \cite{kphdapprox} to the present two-sample setting; cf.~also Remark~\ref{rem:carryoverGts}. Theorem~\ref{thm:HDGaussts} shows that (careful) winsorization allows one to establish Gaussian approximation results with exponentially increasing dimensionality despite weak moment assumptions and under adversarial contamination. 

Denoting the correlation matrix corresponding to~$\Sigma^{\Delta}$ by~$\Sigma_0^{\Delta}$, we now present a Gaussian approximation inequality for the distribution of~$S^{\Delta}_{W, S}$. This result is our main tool to derive theoretical guarantees for our testing procedures in the two-sample case, and can be viewed as a two-sample version of Theorem 4.1 in \cite{kphdapprox}, cf.~Theorem~\ref{thm:HDGauss_studentized} in Appendix~\ref{app:aux}. Its proof is based on Proposition~\ref{prop:covestimGRAM2s} and Theorem~\ref{thm:HDGaussts}. We here define, recall the definition of~$\mathfrak{d}^{\Delta}$ from~\eqref{eqn:ddefts},
\begin{equation}\label{eqn:Bdeltadef}
\mathfrak{B}^{\Delta} := \sqrt{\log(d)\log(d\min(n_X, n_Y))} \times \mathfrak{d}^{\Delta}.
\end{equation}
[In the formulation of Theorem~\ref{thm:HDGauss_studentizedts} it is understood, cf.~also the definition of~$\tilde{\Sigma}^{\Delta}$ in~\eqref{eqn:covesttspo}, that the pooled covariance estimator is used only in case it is known and correct that~$\Sigma^X = \Sigma^Y$.]

\begin{theorem}\label{thm:HDGauss_studentizedts}
Let Assumptions~\ref{ass:setting},~\ref{ass:settingY}, and~\ref{ass:indep} be satisfied.  If~$\eps_X,\eps_X'\in(0,1/2)$ and~$\eps_Y,\eps_Y'\in(0,1/2)$, then, for~$Z'\sim\mathsf{N}_d(0,\Sigma_0^{\Delta})$,
\begin{equation}\label{eq:HDGaussstudentizedts}
\rho^{\Delta}_{W,S}
:=
\sup_{H\in\mc{H}}\envert[2]{\P\del[1]{S^{\Delta}_{W,S}\in H}-\P\del[1]{Z'\in H}}
\leq
C_S^{\Delta} \times \left(\mathfrak{A}^X_{n_X} + \mathfrak{A}^Y_{n_Y}+\mathfrak{B}^{\Delta}\right),
\end{equation}
where~$C_S^{\Delta}$ is a constant depending only on~$b^V_1,b^V_2,\lambda_{1,V},\lambda_{2,V},\lambda_{1,V}',\lambda_{2,V}'$ and~$m_V$ for~$V \in \{X, Y\}$. In particular, the upper bound in~\eqref{eq:HDGaussstudentizedts}, and hence~$\rho^{\Delta}_{W, S}$, converges to~$0$ in any asymptotic regime satisfying Assumption~\ref{ass:asyXY}.
\end{theorem}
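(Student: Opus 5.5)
We follow the standard route from a non-normalized to a normalized Gaussian approximation, as in the proof of Theorem 4.1 in \cite{kphdapprox}, using Theorem~\ref{thm:HDGaussts} and Proposition~\ref{prop:covestimGRAM2s} as the two inputs.

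\textbf{Reduction to rescaled rectangles.} Let $D^{\Delta}:=\mathrm{diag}(\sigma^{\Delta}_{2,1},\hdots,\sigma^{\Delta}_{2,d})$. Since $\sigma^{\Delta,2}_{2,j}=w_Y^2\sigma^{X,2}_{2,j}+w_X^2\sigma^{Y,2}_{2,j}\geq \min(b_1^X,b_1^Y)^2>0$ (because $w_X^2+w_Y^2=1$), this matrix is invertible and $Z'\overset{d}{=}(D^{\Delta})^{-1}Z$ with $Z\sim\mathsf{N}_d(0,\Sigma^{\Delta})$. Fix a rectangle $H=\prod_j[l_j,u_j]$. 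On the event $\mathcal{E}$ where $\max_{j,k}|\tilde\Sigma^{\Delta}_{j,k}-\Sigma^{\Delta}_{j,k}|\leq C\mathfrak{d}^{\Delta}$, we have $|\tilde\sigma^{\Delta}_j/\sigma^{\Delta}_{2,j}-1|\leq C'\mathfrak{d}^{\Delta}=:\delta$ uniformly in $j$. We may assume $\delta$ is small, say $\delta\leq 1/2$; otherwise the bound is trivial after enlarging $C_S^\Delta$, since $\mathfrak{B}^\Delta\gtrsim\mathfrak{d}^\Delta$ because $d\geq2$.

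\textbf{Sandwich and anti-concentration.} On $\mathcal{E}$, the event $\{S^{\Delta}_{W,S}\in H\}$ equals $\{S^{\dagger,\Delta}_{W,S,j}\in \tilde\sigma^\Delta_j[l_j,u_j]\ \forall j\}$. This event is sandwiched between $\{S^{\dagger,\Delta}_{W,S}\in H^\pm\}$, where $H^\pm$ are data-independent rectangles with sides $\sigma^\Delta_{2,j}[(1\mp\delta)\ldots]$ appropriately enlarged or shrunk. The shrinking and enlargement must be arranged by sign, since the endpoints may be negative or infinite. Applying Theorem~\ref{thm:HDGaussts} to $H^\pm$ replaces $S^{\dagger,\Delta}_{W,S}$ by $Z$ at cost $C^\Delta(\mathfrak{A}^X_{n_X}+\mathfrak{A}^Y_{n_Y})$.

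It then remains to bound $\P(Z'\in (H^\pm)')-\P(Z'\in H)$, where $(H^\pm)'$ denotes the $(D^{\Delta})^{-1}$-rescaled rectangles. This is the Gaussian comparison step and the main obstacle, because the perturbation is multiplicative and the endpoints are unbounded. Here I would split off a large-deviation event: $\P(\max_j|Z'_j|>\sqrt{2\log(d\min(n_X,n_Y))}+t)$ is at most of order $1/\min(n_X,n_Y)$. On the complement, the multiplicative perturbation $\delta|l_j|$ becomes an additive one of size at most $\delta\sqrt{\log(d\min(n_X,n_Y))}$. Nazarov's anti-concentration inequality for unit-variance Gaussians then gives an error of order $\delta\sqrt{\log d}\sqrt{\log(d\min(n_X,n_Y))}\asymp\mathfrak{B}^{\Delta}$. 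The $O(1/\min(n_X,n_Y))$ terms, and the failure probability $24(n_X+n_Y)/(n_Xn_Y)\leq 48/\min(n_X,n_Y)$ of $\mathcal{E}$ from Proposition~\ref{prop:covestimGRAM2s}, are dominated by $\mathfrak{A}^X_{n_X}+\mathfrak{A}^Y_{n_Y}$, since $\mathfrak{A}^V_{n_V}\gtrsim n_V^{-1/4}$.

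\textbf{Checking the hypotheses.} We also need to verify the condition~\eqref{eqn:epscond} of Proposition~\ref{prop:covestimGRAM2s}. If it fails, then $\eps'_V$ or $\log(d^2n_V)/n_V$ is bounded below by a constant, so $\mathfrak{d}^{V}_{n_V}$ and hence $\mathfrak{A}$ or $\mathfrak{B}^\Delta$ is bounded below, and the inequality holds trivially after adjusting the constant. The pooled case uses the pooled $\mathfrak{d}^\Delta$ identically. Taking the supremum over $H$ yields~\eqref{eq:HDGaussstudentizedts}.

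\textbf{Asymptotics.} Convergence under Assumption~\ref{ass:asyXY} follows as in the one-sample case. Each $\mathfrak{A}^V_{n_V}\to0$ by the same argument as for Theorem~\ref{thm:mainthG}. Since $w^2\leq1$, $\mathfrak{B}^\Delta\leq\mathfrak{B}^X_{n_X}+\mathfrak{B}^Y_{n_Y}$, up to replacing $\log(d\min(n_X,n_Y))$ by $\log(dn_V)$, and these terms vanish as in the one-sample case.
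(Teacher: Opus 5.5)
Your proof is correct, but it is organized differently from the paper's.

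\textbf{The paper's route.} It introduces the population-normalized statistic $T_{\Delta}=D_{\Delta}^{-1}S^{\dagger,\Delta}_{W,S}$ and bounds the additive coupling error $\|S^{\Delta}_{W,S}-T_{\Delta}\|_\infty\le\max_j|1/\tilde\sigma^{\Delta}_j-1/\sigma^{\Delta}_{2,j}|\cdot\|S^{\dagger,\Delta}_{W,S}\|_\infty$ in probability. It then invokes the generic perturbation result Lemma A.4 of \cite{kprobfree}, which supplies the anti-concentration step. To control $\|S^{\dagger,\Delta}_{W,S}\|_\infty$, the paper has to use Theorem~\ref{thm:HDGaussts} an additional time together with a Gaussian maximal inequality. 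In other words, the truncation happens on the data side.

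\textbf{Your route.} You sandwich $\{S^{\Delta}_{W,S}\in H\}$ exactly, on the good event of Proposition~\ref{prop:covestimGRAM2s}, between two deterministic, multiplicatively perturbed rectangles. Theorem~\ref{thm:HDGaussts} is then applied only to those two rectangles. The truncation $\max_j|Z'_j|\le R$ moves to the Gaussian side, where the tail bound is explicit. This makes the argument self-contained apart from Nazarov's inequality.

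\textbf{What each buys.} Your price is the endpoint bookkeeping you mention: sign-dependent scaling, infinite endpoints, and a possibly empty shrunken rectangle. The last is harmless if you apply Nazarov's inequality to the $2d$-dimensional vector $(Z',-Z')$ with one-sided thresholds shifted by $\epsilon$. The paper's additive perturbations stay inside $\mathcal{H}$ automatically, and its sup-norm coupling plus perturbation lemma is a modular device. Both routes give an error of order $\mathfrak{d}^{\Delta}\sqrt{\log(d)\log(d\min(n_X,n_Y))}$, plus terms absorbed into $\mathfrak{A}^X_{n_X}+\mathfrak{A}^Y_{n_Y}$.

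\textbf{One small precision.} Suppose \eqref{eqn:epscond} fails for one sample $V$. Then the trivial bound should be routed through $\mathfrak{A}^V_{n_V}$, which is bounded below by a constant in that case. It should not be routed through $\mathfrak{d}^{V}_{n_V}$ and $\mathfrak{B}^{\Delta}$, because the weight multiplying $\mathfrak{d}^{V}_{n_V}$ in $\mathfrak{d}^{\Delta}$ can be arbitrarily small.
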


It is important to emphasize that the approximation statements in~\eqref{eq:HDGaussts} and~\eqref{eq:HDGaussstudentizedts}, and the corresponding limiting statements under regimes satisfying Assumption~\ref{ass:asyXY}, hold (i)~despite adversarial contamination, and that (ii)~they deliver~$\rho^{\Delta}_{W} \to 0$ and~$\rho^{\Delta}_{W,S} \to 0$ in settings where the dimension~$d$ can increase exponentially in sample size, while only imposing~$m_X > 2$ and~$m_Y > 2$ moments to exist (cf.~also the discussion in \cite{kphdapprox} for the one-sample case). Neither of statements (i) and (ii) do not necessarily hold for standard sample means, which are highly non-robust, and typically require sub-exponential tail conditions to guarantee (ii), cf.~\cite{zhangwu2017} and~\cite{kock2024remark}.

\subsubsection{Bootstrap quantiles for the two-sample case}\label{sec:bootqts}

To define the (conditional) quantile functions for the two-sample setting, we obtain the correlation matrix~$\tilde{\Sigma}^{\Delta}_0$, say, corresponding to~$\tilde{\Sigma}^{\Delta}$ in the same way as we obtained~$\tilde{\Sigma}^X_0$ from~$\tilde{\Sigma}^X$ in Equation~\eqref{eqn:corrmatdef}. Regardless of whether a diagonal element of~$\tilde{\Sigma}^{\Delta}$ vanishes or not,~$\tilde{\Sigma}_0^{\Delta}$ is then always well defined and constitutes a correlation matrix. Analogously to the definitions in Section~\ref{sec:quants}, but now for the two-sample case, we denote, for a non-empty set~$\hat{A} = \hat{A}(\tilde{X}_1, \hdots, \tilde{X}_{n_X}, \tilde{Y}_1, \hdots, \tilde{Y}_{n_Y}) \subseteq [d]$ by~$[0, 1] \ni \beta \mapsto c_{\beta, \hat{A}}(\tilde{\Sigma}^{\Delta}_0)$ the (conditional) quantile function of the random variable (cf.~also the notation introduced in Footnote~\ref{foot:quantinv2})~$\max_{j \in \hat{A}} | (\tilde{\Sigma}_0^{\Delta, 1/2}Z)_j |$ (the quantile function is defined as~$0$ for~$\beta = 0$, and as~$\infty$ for~$\beta = 1$), which clearly satisfies the monotonicity property 
\begin{equation}\label{eq:MonCVBts}
c_{\cdot,\hat{A}_1}(\tilde{\Sigma}^{\Delta}_0) \leq c_{\cdot,\hat{A}_2}(\tilde{\Sigma}^{\Delta}_0) \quad \text{ whenever } \quad \emptyset \neq \hat{A}_1\subseteq \hat{A}_2\subseteq[d].
\end{equation}
We analogously define~$[0, 1] \ni \beta \mapsto c_{\beta, \hat{A}}(\Sigma^{\Delta}_0)$, which also satisfies a monotonicity relation analogous to~\eqref{eq:MonCVBts}, and is related to~$\beta \mapsto c_{\beta, \hat{A}}(\tilde{\Sigma}^{\Delta}_0)$ via Proposition~\ref{prop:BSAlwaysvalidts}, a technical device used in establishing guarantees for the bootstrap-based multiple testing procedures in the two-sample case. 

\subsection{Testing the global null hypothesis in the two-sample case}\label{sec:globnull2s}

Size and power properties of supremum norm-based tests for the one-sample global null defined through the hypotheses in~\eqref{eq:testingproblem} based on winsorized statistics have been thoroughly investigated in \cite{kprobfree}. Two-sample versions of such (winsorization-based) tests have neither been introduced nor investigated yet. Equipped with Theorem~\ref{thm:HDGauss_studentizedts} above, together with Proposition~\ref{prop:BSAlwaysvalidts} in Appendix~\ref{app:proofsts}, we now briefly consider the high-dimensional \emph{global testing problem} defined by the hypotheses in~\eqref{eq:testingproblemts2}. This high-dimensional two-sample global testing problem, based on non-robust statistics, has recently been considered in \cite{xue2020distribution}. 

We shall establish an upper bound on the null rejection probability of the \emph{test} that rejects if and only if
\begin{equation}\label{eqn:tsglobG}
\|S^{\Delta}_{W}\|_{\infty} > c_{1-\alpha, [d]},
\end{equation}
and the \emph{test} that rejects if and only if
\begin{equation}\label{eqn:tsglobB}
\|S^{\Delta}_{W}\|_{\infty} > c_{1-\alpha, [d]}(\tilde{\Sigma}^{\Delta}_0),
\end{equation}
particularly showing asymptotic size control under regimes satisfying Assumption~\ref{ass:asyXY}. 

As already pointed out in the context of Algorithms~\ref{algo:boot} and~\ref{algo:boot2}, a critical value such as~$c_{1-\alpha}(\tilde{\Sigma}^{\Delta}_0)$ above is not computable in closed form. Therefore, in practice, one has to determine it numerically (which is less of an issue in the context of global hypothesis testing, as the critical value only has to be determined once). To incorporate this into our analysis, we also provide a result for a test based on a numerically obtained quantile. To this end, \emph{similar} to what was done in Algorithm~\ref{algo:boot2}, conditionally on the data, let~$z_1, \hdots, z_B$ be independently drawn from~$\mathsf{N}_d(0, \tilde{\Sigma}^{\Delta}_0)$ and denote by~$$\hat{c}_{1-\alpha, [d]}(\tilde{\Sigma}^{\Delta}_0) := \text{the } (Q_{B, 1-\alpha}(1-1/B) + 1)\text{th order statistic from } \quad \|z_1\|_{\infty}, \hdots, \|z_B\|_{\infty}.$$ Note that we were working with~$m(B, \alpha, d)$ in Algorithm~\ref{algo:boot2}, whereas in the present context, where there is only one (as opposed to possibly~$d$ steps), one can work with a single bootstrap sample and (the corresponding)~$m(B, \alpha, 1)$. The result is as follows. [In the formulation of Theorem~\ref{thm:mainthGtsgl} it is understood, cf.~also the definition of~$\tilde{\Sigma}^{\Delta}$ in~\eqref{eqn:covesttspo}, that the pooled covariance estimator is used only in case it is known and correct that~$\Sigma^X = \Sigma^Y$.] To state the result, we introduce~$\mathfrak{C}^{\Delta} := \log(d) \times \sqrt{\mathfrak{d}^{\Delta}}$.
\begin{theorem}\label{thm:mainthGtsgl}
Let Assumptions~\ref{ass:setting},~\ref{ass:settingY}, and~\ref{ass:indep} be satisfied, and fix~$\alpha \in (0, 1)$. Under the condition that~$\Delta = 0$, and assuming that~$\eps_X,\eps_X'\in(0,1/2)$ and~$\eps_Y,\eps_Y'\in(0,1/2)$, it holds that
\begin{equation}\label{eqn:fwerGtsG}
\P\left(\|S^{\Delta}_{W}\|_{\infty} > c_{1-\alpha, [d]} \right) \leq \alpha  + C \times \left(\mathfrak{A}^X_{n_X} + \mathfrak{A}^Y_{n_Y}+\mathfrak{B}^{\Delta}\right),
\end{equation}
\begin{equation}\label{eqn:fwerGtsB}
\P\left(\|S^{\Delta}_{W}\|_{\infty} > c_{1-\alpha, [d]}(\tilde{\Sigma}^{\Delta}_0) \right) \leq \alpha  + C' \times \left(\mathfrak{A}^X_{n_X} + \mathfrak{A}^Y_{n_Y}+\mathfrak{B}^{\Delta} + \mathfrak{C}^{\Delta} \right);
\end{equation}
and, for any natural number~$B \geq 2$ such that~$Q_{B, 1-\alpha}(1-1/B)<B$, 
\begin{equation}\label{eqn:fwerGtsB2}
\P\left(\|S^{\Delta}_{W}\|_{\infty} > \hat{c}_{1-\alpha, [d]}(\tilde{\Sigma}^{\Delta}_0) \right) \leq \alpha  + B^{-1} + C'' \times \left(\mathfrak{A}^X_{n_X} + \mathfrak{A}^Y_{n_Y}+\mathfrak{B}^{\Delta} +\mathfrak{C}^{\Delta}\right),
\end{equation}
where~$C, C', C''$ are constants depending only on~$b^V_1,b^V_2,\lambda_{1, V},\lambda_{2, V},\lambda_{1, V}',\lambda_{2, V}'$ and~$m_V$ for~$V \in \{X, Y\}$. The upper bounds in~\eqref{eqn:fwerGtsG} and~\eqref{eqn:fwerGtsB} converge to~$\alpha$ in any asymptotic regime satisfying Assumption~\ref{ass:asyXY}, while the upper bound in~\eqref{eqn:fwerGtsB2} requires the additional condition that~$B \to \infty$ to converge to~$\alpha$.
\end{theorem}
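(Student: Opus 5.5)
Since $\Delta = 0$, we have $J(\Delta) = [d]$, and Lemma~\ref{lem:coincts} gives $S^{\Delta}_{W,j} = S^{\Delta}_{W,S,j}$ for every $j$ with $\tilde{\sigma}^{\Delta}_j \neq 0$. The first step is to control the event $\mathcal{E} := \{\min_j \tilde{\sigma}^{\Delta}_j > 0\}$, on which $S^{\Delta}_W = S^{\Delta}_{W,S}$ exactly. By Proposition~\ref{prop:covestimGRAM2s}, with probability at least $1 - 24(n_X+n_Y)/(n_Xn_Y)$ we have $\max_j |(\tilde{\sigma}^{\Delta}_j)^2 - (\sigma^{\Delta}_{2,j})^2| \leq C\mathfrak{d}^{\Delta}$. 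Moreover, $(\sigma^{\Delta}_{2,j})^2 \geq \min(b_1^X,b_1^Y)^2 > 0$ because $w_X^2 + w_Y^2 = 1$.

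To handle the bad cases we split on the size of $\mathfrak{d}^{\Delta}$.
\begin{itemize}
\item If $C\mathfrak{d}^{\Delta}$ exceeds half this lower bound, then $\mathfrak{B}^{\Delta} \geq \mathfrak{d}^{\Delta}$ (since $d \geq 2$ and $\min(n_X,n_Y) > 3$) is bounded below by a constant, and the claim is trivial after enlarging $C$.
\item Otherwise $\mathcal{E}$ has probability at least $1 - 24(n_X+n_Y)/(n_Xn_Y)$. The term $(n_X+n_Y)/(n_Xn_Y) \lesssim \min(n_X,n_Y)^{-1}$ is dominated by $\mathfrak{A}^X_{n_X} + \mathfrak{A}^Y_{n_Y}$; the first summand of $\mathfrak{A}$ is at least $n^{-(1-2/m)/4} \geq n^{-1}$ up to constants.
\end{itemize}
The hypothesis~\eqref{eqn:epscond} of Proposition~\ref{prop:covestimGRAM2s} also needs checking. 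If it fails, then $\log(d^2 n_V)/n_V$ or $\eps_V'$ is bounded below, which again makes $\mathfrak{A}$ or $\mathfrak{d}^{\Delta}$ of order one, so this case is also trivial.

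For~\eqref{eqn:fwerGtsG}, on $\mathcal{E}$ the event $\{\|S^{\Delta}_W\|_\infty \le c_{1-\alpha,[d]}\}$ equals $\{S^{\Delta}_{W,S} \in H\}$ for the hyperrectangle $H = [-c,c]^d$. Theorem~\ref{thm:HDGauss_studentizedts} then compares its probability with $\P(\|Z'\|_\infty \le c_{1-\alpha,[d]})$. By Khatri--{\v{S}}id{\'a}k (as in~\eqref{eqn:ksiquant}) this is at least $\P(\max_j|Z_j| \le c_{1-\alpha,[d]}) = 1-\alpha$ for $Z \sim \mathsf{N}_d(0,I_d)$. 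Adding the error terms gives~\eqref{eqn:fwerGtsG}.

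For~\eqref{eqn:fwerGtsB}, the data-dependent critical value must be replaced by the deterministic one. I would invoke Proposition~\ref{prop:BSAlwaysvalidts}, the two-sample analogue of Proposition~\ref{prop:BSAlwaysvalid}. I expect it to state that, on the covariance concentration event, $c_{1-\alpha,[d]}(\tilde{\Sigma}^{\Delta}_0) \geq c_{1-\alpha-\delta,[d]}(\Sigma^{\Delta}_0)$ with $\delta \lesssim \log(d)\sqrt{\mathfrak{d}^{\Delta}} = \mathfrak{C}^{\Delta}$. The proof of that proposition would use a Gaussian comparison inequality for maxima, with error of order $(\|\tilde{\Sigma}_0 - \Sigma_0\|_\infty \log^2 d)^{1/2}$, together with the fact that correlation entries are estimated to accuracy $\mathfrak{d}^{\Delta}$ once the variances are bounded below. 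Then
\[
\P\bigl(\|S^\Delta_W\|_\infty > c_{1-\alpha,[d]}(\tilde{\Sigma}^\Delta_0)\bigr) \le \P\bigl(\|S^\Delta_{W,S}\|_\infty > c_{1-\alpha-\delta,[d]}(\Sigma^\Delta_0)\bigr) + \P(\mathcal{E}^c \cup \text{conc. fails}).
\]
This probability is bounded by $\alpha + \delta + \rho^{\Delta}_{W,S}$ plus small terms, using that the cdf of $\|Z'\|_\infty$ is continuous. The main obstacle is getting this quantile-comparison step cleanly; the rest is bookkeeping.

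For~\eqref{eqn:fwerGtsB2}, apply Lemma~\ref{lem:auxB} conditionally on the data with $F$ the conditional cdf of $\|\tilde{\Sigma}^{\Delta,1/2}_0 Z\|_\infty$ (continuous, see Footnote~\ref{foot:quantinv2}), $\beta = 1-\alpha$ and $\gamma = 1-1/B$. This gives $\hat{c}_{1-\alpha,[d]}(\tilde{\Sigma}^{\Delta}_0) \geq c_{1-\alpha,[d]}(\tilde{\Sigma}^{\Delta}_0)$ with conditional, hence unconditional, probability at least $1-1/B$. On that event the rejection event is contained in the one from~\eqref{eqn:fwerGtsB}, which yields the extra $B^{-1}$.

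The asymptotic statements follow because, under Assumption~\ref{ass:asyXY}, $\mathfrak{A}^V \to 0$ (as in Theorem~\ref{thm:mainthG}), and $\mathfrak{B}^{\Delta}, \mathfrak{C}^{\Delta} \to 0$. The latter holds since $\mathfrak{d}^{\Delta} \le \mathfrak{d}^X_{n_X} + \mathfrak{d}^Y_{n_Y}$ (the weights are at most one), and the one-sample limits of $\mathfrak{B}^V, \mathfrak{C}^V$ follow from the Theorem~\ref{thm:mainthB} argument. The one extra point is that $\log(d\min(n_X,n_Y)) \le \log(dn_V)$ for each $V$.
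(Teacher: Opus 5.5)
Your proposal is correct and follows essentially the same route as the paper: Lemma~\ref{lem:coincts} with $J(\Delta)=[d]$, then Theorem~\ref{thm:HDGauss_studentizedts} plus Khatri--{\v{S}}id{\'a}k for \eqref{eqn:fwerGtsG}, Proposition~\ref{prop:BSAlwaysvalidts} for \eqref{eqn:fwerGtsB}, and Lemma~\ref{lem:auxB} with $\gamma = 1-1/B$ plus a union bound for \eqref{eqn:fwerGtsB2}, with the degenerate cases absorbed into the constants. The only difference is that you bound the event $\{\min_j \tilde{\sigma}^{\Delta}_j = 0\}$ separately via Proposition~\ref{prop:covestimGRAM2s}, whereas the paper leaves it implicit in Theorem~\ref{thm:HDGauss_studentizedts} through the convention of Remark~\ref{rem:div0}; this is harmless, though not strictly needed for these one-sided bounds.
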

The upper bound in~\eqref{eqn:fwerGtsG} is for test based on worst-case critical values obtained from an application of the Khatri-{\v{S}}id{\'a}k inequality. The corresponding test is hence conservative in case of notable correlations in~$\Sigma^{\Delta}_0$. The upper bound in~\eqref{eqn:fwerGtsB} adapts to that correlation matrix, but has the disadvantage that the critical value it is based on is not available in closed form, but needs to be determined approximately. The upper bound on the test in~\eqref{eqn:fwerGtsB2}, which is based on a (carefully chosen) empirical quantile from a bootstrap sample, explicitly carries a term that illustrates the effect of approximating the critical value. We recommend the test corresponding to~\eqref{eqn:fwerGtsB2} for practical use.

For illustration and ease of implementation in practice (although the quantity is immediately obtained in any reasonable statistical computing environment), we tabulate~$(Q_{B, 1-\alpha}(1-1/B) + 1)/B$ for some values for common~$B$ and~$\alpha$ (similar to what has been done in Tables~\ref{tab:alpha_B}-\ref{tab:alpha_B3} in Remark~\ref{rem:empord} in the one-sample multiple testing framework). The values are given in Table~\ref{tab:ght2} and illustrate that~$B = 10{,}000$ achieves~$(Q_{B, 1-\alpha}(1-1/B) + 1)/B \approx (1-\alpha)$ and~$B^{-1} \approx 0$ in practice, without imposing a too heavy  computational burden.

\begin{table}[ht]
\centering
\begin{tabular}{rrrrrr}	
	\hline
	& 500 & 1{,}000 & 5{,}000 & 10{,}000 & 20{,}000 \\ 
	\hline
	0.9 & 0.938 & 0.929 & 0.915 & 0.911 & 0.908 \\ 
	0.95 & 0.978 & 0.971 & 0.961 & 0.958 & 0.956 \\ 
	0.99 & NA & 0.999 & 0.995 & 0.994 & 0.993 \\ 
	\hline
\end{tabular}
\caption{$(Q_{B, 1-\alpha}(1-1/B) + 1)/B$ for different values of $\alpha$ and $B$; violation of~$Q_{B, \beta}(\gamma) < B$ results in NA.}
	\label{tab:ght2}
\end{table}

\subsection{Multiple testing procedures and theoretical guarantees}\label{sec:tsmt}

We now propose robust step-down procedures~$\hat{\Pi} = \hat{\Pi}(\tilde{X}_1, \hdots, \tilde{X}_{n_X}, \tilde{Y}_1, \hdots, \tilde{Y}_{n_Y}) \subseteq [d]$, say, to identify the true null hypotheses among~\eqref{eq:testingproblemts2}, i.e.,~$$J(\Delta) = \{j \in [d] : \Delta_j = 0\}.$$ As in the one-sample case, our main goal is to suggest procedures that asymptotically control the FWER, i.e., the probability~$\P(J(\Delta) \not \subseteq \hat{\Pi} )$ that at least one true null hypothesis is not contained in~$\hat{\Pi}$. The procedures we discuss here are two-sample versions of the procedures introduced for the one-sample case. Again, we state a computationally parsimonious procedure based on worst-case Gaussian critical values, and a procedure based on (multiplier) bootstrap quantiles, which comes with the advantage of being less conservative in case of notable departure of~$\Sigma^{\Delta}_0$ from the identity matrix. Furthermore, since the latter critical values need to be approximated in practice, we also discuss a readily implementable procedure that is based on explicit empirical quantiles from a bootstrap sample, and takes this approximation step into account in the corresponding upper bound. For ease of implementability and comparability we state the algorithms and theorems separately in analogy to the one-sample case, but we keep the discussion of the algorithms and results (which are similar to the one-sample case) to a minimum.

Our multiple testing procedure based on Gaussian critical values for the two-sample case is defined as follows.
\begin{algorithm}[H]
\caption{Step-down procedure for testing \eqref{eq:testingproblemts} based on Gaussian critical values with worst-case correlation matrix (i.e., under independence)}
\label{algo:gaussts}
\begin{algorithmic}[1]
\State \textbf{Input:} $\tilde{V}_1,\hdots, \tilde{V}_{n_V}$,~$\lambda_{1,V}, \lambda_{1,V}', \lambda_{2,V}, \lambda_{2,V}'$,~$\overline{\eta}^V$, for~$V \in \{X, Y\}$ (and in the respective ranges as required), and~$\alpha \in (0, 1)$.
\If{$\max(\eps_V, \eps_V': V \in \{X, Y\}) \geq 1/2$}
\State $\hat{\Pi}^{\Delta}_G \gets [d]$
\Else \State \textbf{Initialize:}~$k \gets 0$ and~$\hat{\Pi}_0 \gets[d]$

\While{$\hat{\Pi}_k \neq \emptyset$ and $\cbr[1]{j\in \hat{\Pi}_k: |S^{\Delta}_{W,j}|\leq c_{1-\alpha, \hat{\Pi}_k}} \subsetneqq \hat{\Pi}_k$}  \State $k \gets k+1$
\State $\hat{\Pi}_k \gets \cbr[1]{j\in \hat{\Pi}_{k-1}: |S^{\Delta}_{W,j}|\leq c_{1-\alpha, \hat{\Pi}_{k-1}}}$
\EndWhile
\EndIf
\State \textbf{Return:} $\hat{\Pi}^{\Delta}_G \gets \hat{\Pi}_k$.
\end{algorithmic}
\end{algorithm}

We also establish an upper bound on the FWER of $\hat{\Pi}^{\Delta}_G$. [In the formulation of Theorem~\ref{thm:mainthGts} it is understood, cf.~also the definition of~$\tilde{\Sigma}^{\Delta}$ in~\eqref{eqn:covesttspo}, that the pooled covariance estimator is used only in case it is known and correct that~$\Sigma^X = \Sigma^Y$.]
\begin{theorem}\label{thm:mainthGts}
Let Assumptions~\ref{ass:setting},~\ref{ass:settingY}, and~\ref{ass:indep} be satisfied, and fix~$\alpha \in (0, 1)$. Then,
\begin{equation}\label{eqn:fwerGts}
\P\left(J(\Delta) \not \subseteq \hat{\Pi}_G^{\Delta}  \right) \leq \alpha  + C \times \left(\mathfrak{A}^X_{n_X} + \mathfrak{A}^Y_{n_Y}+\mathfrak{B}^{\Delta}\right),
\end{equation}
where $C$ is a constant depending only on~$b^V_1,b^V_2,\lambda_{1, V},\lambda_{2, V},\lambda_{1, V}',\lambda_{2, V}'$ and~$m_V$ for~$V \in \{X, Y\}$. The upper bound in~\eqref{eqn:fwerGts} converges to~$\alpha$ in any asymptotic regime satisfying Assumption~\ref{ass:asyXY}.
\end{theorem}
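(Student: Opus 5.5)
We will follow the standard step-down argument of \cite{romano2005exact}, combined with the Gaussian approximation of Theorem~\ref{thm:HDGauss_studentizedts}. Throughout we write $J := J(\Delta)$ and may assume $J \neq \emptyset$, since otherwise the probability in~\eqref{eqn:fwerGts} is zero.

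\emph{Step 1: the degenerate case.} If $\max(\eps_V, \eps_V' : V \in \{X, Y\}) \geq 1/2$, the algorithm returns $\hat{\Pi}^{\Delta}_G = [d]$, so the FWER vanishes. Hence we may assume all winsorization parameters lie in $(0,1/2)$. In that case Theorem~\ref{thm:HDGauss_studentizedts} and Proposition~\ref{prop:covestimGRAM2s} are applicable. To see this, note that \eqref{eqn:epscond} either holds or the right-hand side of~\eqref{eqn:fwerGts} can be made $\geq 1$ by enlarging $C$. Here we use that $\mathfrak{d}^V$, and hence $\mathfrak{B}^\Delta$ or $\mathfrak{A}^V$, is bounded below when the $\eps$'s are of constant order.

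\emph{Step 2: the step-down reduction.} Suppose some $j\in J$ is removed. Consider the first step $k$ at which an element of $J$ is removed. At that step $J\subseteq\hat{\Pi}_k$, and some $j\in J$ satisfies $|S^\Delta_{W,j}|>c_{1-\alpha,\hat{\Pi}_k}\ge c_{1-\alpha,J}$, where the last inequality is the monotonicity~\eqref{eq:MonCVG}. Therefore
\[
\{J\not\subseteq \hat{\Pi}^{\Delta}_G\}\subseteq\Big\{\max_{j\in J}|S^\Delta_{W,j}|>c_{1-\alpha,J}\Big\}.
\]
The key point is that $c_{1-\alpha,J}$ is non-random, since $J$ is a fixed set.

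Next we handle indices with vanishing estimated variance. If $j\in J$ has $\tilde\sigma^\Delta_j=0$, then $S^\Delta_{W,j}=0$, so such a $j$ never triggers the event. For all other $j\in J$, Lemma~\ref{lem:coincts} gives $S^\Delta_{W,j}=S^\Delta_{W,S,j}$. We would bound the probability that some $\tilde\sigma^\Delta_j=0$ using Proposition~\ref{prop:covestimGRAM2s}: on its high-probability event, $\tilde\sigma^{\Delta,2}_j\ge (\sigma^\Delta_{2,j})^2 - C\mathfrak{d}^\Delta$, which is positive because $(\sigma^\Delta_{2,j})^2\ge \min(b_1^X,b_1^Y)^2$. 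If $C\mathfrak{d}^\Delta$ is not small, the bound is trivial after enlarging the constant. The exceptional probability $24(n_X+n_Y)/(n_Xn_Y)$ must be dominated by $\mathfrak{A}^X_{n_X}+\mathfrak{A}^Y_{n_Y}$. This holds because the first term of $\mathfrak{A}^V$ is of order at least $n_V^{-(1-2/m_V)/4}\gg n_V^{-1}$. Alternatively, this event is already absorbed within the proof of Theorem~\ref{thm:HDGauss_studentizedts}, where $S^\Delta_{W,S}$ is treated on it.

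\emph{Step 3: Gaussian approximation and the Khatri--\v{S}id\'ak bound.} The event $\{\max_{j\in J}|S^\Delta_{W,S,j}|\le c_{1-\alpha,J}\}$ is a hyperrectangle, with all coordinates outside $J$ unrestricted. Theorem~\ref{thm:HDGauss_studentizedts} therefore gives
\[
\P\Big(\max_{j\in J}|S^\Delta_{W,S,j}|>c_{1-\alpha,J}\Big)\le \P\Big(\max_{j\in J}|Z'_j|>c_{1-\alpha,J}\Big)+C_S^\Delta(\mathfrak{A}^X_{n_X}+\mathfrak{A}^Y_{n_Y}+\mathfrak{B}^\Delta),
\]
with $Z'\sim\mathsf{N}_d(0,\Sigma^\Delta_0)$. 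Since $Z'$ has unit variances, the Khatri--\v{S}id\'ak inequality yields $\P(\max_{j\in J}|Z'_j|\le c)\ge\prod_{j \in J}\P(|Z'_j|\le c)$. This equals the cdf of $\max_{j\le|J|}|Z_j|$ for independent standard normals, so the first probability is at most $\alpha$, which gives~\eqref{eqn:fwerGts}.

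\emph{Step 4: asymptotics.} The convergence claim follows because the right-hand side of~\eqref{eq:HDGaussstudentizedts} tends to zero under Assumption~\ref{ass:asyXY}, as stated in Theorem~\ref{thm:HDGauss_studentizedts}.

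The main obstacle is the bookkeeping in Step 2. We must ensure that the null coordinates with $\tilde\sigma^\Delta_j=0$ and the undefined $S^\Delta_{W,S,j}$ are handled consistently with how Theorem~\ref{thm:HDGauss_studentizedts} defines the event $\{S^\Delta_{W,S}\in H\}$. The simplest route is to restrict to the event where all $\tilde\sigma^\Delta_j>0$ and pay its complement probability, which is absorbed into the constant $C$.
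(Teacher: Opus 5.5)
Your proposal is correct and follows the paper's proof essentially step for step. Both arguments use the monotonicity~\eqref{eq:MonCVG} to reduce to the fixed set $J(\Delta)$, Lemma~\ref{lem:coincts} to pass to $S^\Delta_{W,S}$, Theorem~\ref{thm:HDGauss_studentizedts} on the generalized hyperrectangle, and then Khatri--\v{S}id\'ak. The only difference is the event $\{\min_j\tilde\sigma^\Delta_j=0\}$: under the convention of Remark~\ref{rem:div0}, the paper simply uses $\P(S^\Delta_W\in H)\ge\P(S^\Delta_{W,S}\in H,\min_j\tilde\sigma^\Delta_j>0)$ and applies the theorem to the right-hand side, so your separate bound on that event via Proposition~\ref{prop:covestimGRAM2s} is harmless but unnecessary.
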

We next discuss Algorithm~\ref{algo:bootts}, a version of Algorithm~\ref{algo:gaussts} that is based on an estimate of the correlation matrix. It adapts to the underlying correlation structure~$\Sigma_0^{\Delta}$, and can hence be less conservative. While the quantiles in Algorithm~\ref{algo:gaussts} come in closed form (cf.~\eqref{eqn:Gqcf}), the ones used in Algorithm~\ref{algo:bootts} do not. One typically has to determine those quantiles numerically, e.g., by a Gaussian multiplier based bootstrap (making use of the Gram representation pointed out in Remark~\ref{rem:gramrep}). As already announced above, we take these approximation steps into account more explicitly further below in the context of Algorithm~\ref{algo:bootts2}. 

\begin{algorithm}[H]
\caption{Step-down procedure for testing \eqref{eq:testingproblemts} based on Gaussian critical values with estimated correlation matrix}
\label{algo:bootts}
\begin{algorithmic}[1]
\State \textbf{Input:} $\tilde{V}_1,\hdots, \tilde{V}_{n_V}$,~$\lambda_{1,V}, \lambda_{1,V}', \lambda_{2,V}, \lambda_{2,V}'$,~$\overline{\eta}^V$, for~$V \in \{X, Y\}$ (and in the respective ranges as required), and~$\alpha \in (0, 1)$.
\If{$\max(\eps_V, \eps_V': V \in \{X, Y\}) \geq 1/2$}
\State $\hat{\Pi}_B^{\Delta} \gets [d]$
\Else
\State \textbf{Initialize:}~$k \gets 0$ and~$\hat{\Pi}_0 \gets[d]$
\While{$\hat{\Pi}_k \neq \emptyset$ and $\cbr[1]{j\in \hat{\Pi}_k: |S^{\Delta}_{W,j}|\leq c_{1-\alpha, \hat{\Pi}_k}(\tilde{\Sigma}^{\Delta}_{0})} \subsetneqq \hat{\Pi}_k$}  
\State $k \gets k+1$
\State $\hat{\Pi}_k \gets \cbr[1]{j\in \hat{\Pi}_{k-1}: |S^{\Delta}_{W,j}|\leq c_{1-\alpha, \hat{\Pi}_{k-1}}(\tilde{\Sigma}^{\Delta}_{0})}$
\EndWhile
\EndIf
\State \textbf{Return:} $\hat{\Pi}^{\Delta}_B \gets \hat{\Pi}_k$.
\end{algorithmic}
\end{algorithm}

We now establish an upper bound on the FWER of $\hat{\Pi}^{\Delta}_B$. [In the formulation of Theorem~\ref{thm:mainthBts} it is understood, cf.~also the definition of~$\tilde{\Sigma}^{\Delta}$ in~\eqref{eqn:covesttspo}, that the pooled covariance estimator is used only in case it is known and correct that~$\Sigma^X = \Sigma^Y$.]
\begin{theorem}\label{thm:mainthBts}
Let Assumptions~\ref{ass:setting},~\ref{ass:settingY}, and~\ref{ass:indep} be satisfied, and fix~$\alpha \in (0, 1)$. Then, 
\begin{equation}\label{eqn:fwerBts}
\P\left(J(\Delta) \not \subseteq \hat{\Pi}_B^{\Delta}  \right) \leq \alpha +  C \times \left(\mathfrak{A}^X_{n_X} + \mathfrak{A}^Y_{n_Y}+\mathfrak{B}^{\Delta}+\mathfrak{C}^{\Delta}\right),
\end{equation}
where $C$ is a constant depending only on~$b^V_1,b^V_2,\lambda_{1, V},\lambda_{2, V},\lambda_{1, V}',\lambda_{2, V}'$ and~$m_V$ for~$V \in \{X, Y\}$. The upper bound in~\eqref{eqn:fwerBts} converges to~$\alpha$ in any asymptotic regime satisfying Assumption~\ref{ass:asyXY}.
\end{theorem}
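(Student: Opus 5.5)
The plan is the Romano--Wolf step-down argument, reducing the familywise error to a single event concerning the true nulls. Set $J := J(\Delta)$. If $J=\emptyset$ there is nothing to prove. If $\max(\eps_V,\eps_V':V\in\{X,Y\})\geq 1/2$, then $\hat{\Pi}^{\Delta}_B=[d]$ and no error occurs. So assume $J\neq\emptyset$ and all winsorization parameters lie in $(0,1/2)$; the conditions of Proposition~\ref{prop:covestimGRAM2s} are then either met or absorbed by enlarging $C$, since the bound is trivial once the right-hand side exceeds $1$.

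\textbf{Step 1: reduction.} Suppose a true null $j\in J$ is removed for the first time at step $k$. Then $J\subseteq\hat{\Pi}_{k-1}$, and by the monotonicity in~\eqref{eq:MonCVBts},
\[
|S^\Delta_{W,j}|>c_{1-\alpha,\hat{\Pi}_{k-1}}(\tilde{\Sigma}_0^\Delta)\geq c_{1-\alpha,J}(\tilde{\Sigma}_0^\Delta).
\]
Hence
\[
\P\bigl(J\not\subseteq\hat{\Pi}_B^\Delta\bigr)\leq\P\bigl(\max_{j\in J}|S^\Delta_{W,j}|>c_{1-\alpha,J}(\tilde{\Sigma}^\Delta_0)\bigr).
\]
The set $J$ is deterministic. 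Coordinates with $\tilde\sigma^\Delta_j=0$ have $S^\Delta_{W,j}=0$ and cannot trigger a rejection. On the event where all $\tilde\sigma^\Delta_j>0$, Lemma~\ref{lem:coincts} lets us replace $S^\Delta_{W,j}$ by $S^\Delta_{W,S,j}$ for $j\in J$. The event that some $\tilde\sigma^\Delta_j$ vanishes is contained in the covariance-estimation failure event of Proposition~\ref{prop:covestimGRAM2s} once $C\mathfrak{d}^\Delta<\min_j(\sigma^\Delta_{2,j})^2$, where $(\sigma^\Delta_{2,j})^2\geq\min(b_1^X,b_1^Y)^2$. Otherwise the bound is again trivial.

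\textbf{Step 2: replace the data-dependent critical value.} Apply Proposition~\ref{prop:BSAlwaysvalidts}, the two-sample analogue of Proposition~\ref{prop:BSAlwaysvalid}. Its content is that on the event $\max_{j,k}|\tilde\Sigma^\Delta_{j,k}-\Sigma^\Delta_{j,k}|\leq C\mathfrak{d}^\Delta$, we have
\[
c_{1-\alpha,A}(\tilde\Sigma^\Delta_0)\geq c_{1-\alpha-\delta,A}(\Sigma^\Delta_0)\quad\text{for every nonempty }A,
\]
with $\delta\lesssim\mathfrak{C}^\Delta=\log(d)\sqrt{\mathfrak{d}^\Delta}$. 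This follows from a Gaussian comparison inequality in sup-norm, using that the correlation matrices differ entrywise by $O(\mathfrak{d}^\Delta)$ because the variances are bounded below. Consequently the probability above is at most
\[
\P\bigl(\max_{j\in J}|S^\Delta_{W,S,j}|>c_{1-\alpha-\delta,J}(\Sigma^\Delta_0)\bigr)+24\,\frac{n_X+n_Y}{n_Xn_Y}.
\]
The last term is bounded by a multiple of $\mathfrak{A}^X_{n_X}+\mathfrak{A}^Y_{n_Y}$, since each $\mathfrak{A}^V_{n_V}\gtrsim n_V^{-1/4}$.

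\textbf{Step 3: Gaussian approximation.} The critical value is now non-random, so $\{\max_{j\in J}|x_j|\leq c\}$ is a hyperrectangle. Theorem~\ref{thm:HDGauss_studentizedts} gives
\[
\P\bigl(\max_{j\in J}|S^\Delta_{W,S,j}|>c\bigr)\leq\P\bigl(\max_{j\in J}|Z'_j|>c\bigr)+C_S^\Delta\bigl(\mathfrak{A}^X_{n_X}+\mathfrak{A}^Y_{n_Y}+\mathfrak{B}^\Delta\bigr).
\]
With $c=c_{1-\alpha-\delta,J}(\Sigma^\Delta_0)$ and a continuous distribution, the Gaussian probability equals $\alpha+\delta$. (If $\alpha+\delta\geq1$ the bound is trivial.) Collecting terms gives~\eqref{eqn:fwerBts}.

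For the asymptotic claim, the proof of Theorem~\ref{thm:HDGauss_studentizedts} shows that $\mathfrak{A}^V_{n_V}\to0$ and $\mathfrak{B}^\Delta\to0$ under Assumption~\ref{ass:asyXY}. It remains to show $\mathfrak{C}^\Delta\to0$, i.e.\ $\log^2(d)\,\mathfrak{d}^V_{n_V}\to0$; since $w_X,w_Y\leq1$, it suffices to treat each sample separately. For the term $\log^2(d)\,\overline\eta_V^{1-2/m_V}$, the first condition of Assumption~\ref{ass:asyXY} gives $\overline\eta_V^{1-1/m_V}=o\bigl((n_V\log d)^{-1/2}\bigr)$. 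Combined with $\log d=o(n_V^{(m_V-2)/(5m_V-2)})$, this should yield the claim. For the term in $\log(dn_V)/n_V$, the $\log d$ growth rate suffices directly.

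The main obstacle is Step 2: controlling the critical value with estimated correlation uniformly over the data-dependent sets. The reduction to the fixed set $J$ in Step 1 is what makes this tractable, so that Proposition~\ref{prop:BSAlwaysvalidts} only has to be applied at the single set $J$. The exponent bookkeeping for $\mathfrak{C}^\Delta\to0$ is the other place needing care.
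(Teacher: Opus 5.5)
Your proposal is correct and follows essentially the same route as the paper. Both arguments use the monotonicity reduction to the deterministic set $J(\Delta)$, then Proposition~\ref{prop:BSAlwaysvalidts} to replace $c_{1-\alpha,J(\Delta)}(\tilde{\Sigma}^{\Delta}_0)$ by $c_{(1-\alpha-\mathfrak{s}^{\Delta})\vee 0,J(\Delta)}(\Sigma^{\Delta}_0)$ on an event of probability at least $1-24(n_X+n_Y)/(n_Xn_Y)$, then Lemma~\ref{lem:coincts} with Theorem~\ref{thm:HDGauss_studentizedts}, and absorb the degenerate cases into the constant. Your explicit check that $\log^2(d)\,\mathfrak{d}^V_{n_V}\to 0$ (hence $\mathfrak{C}^{\Delta}\to 0$) under Assumption~\ref{ass:asyXY} is left implicit in the paper's proof of this theorem, and your exponent bookkeeping there works out because $(m_V-2)/(5m_V-2)<(m_V-2)/(3m_V-2)$.
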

Similar to Algorithm~\ref{algo:boot2}, we next state and analyze a procedure that incorporates this approximation step explicitly in Algorithm~\ref{algo:bootts2}. We use the same notation as in the description of Algorithm~\ref{algo:boot2}, cf.~also Lemma~\ref{lem:auxB} together with its discussion in Remark~\ref{rem:empord} and the exemplary values tabulated there. That discussion also applies here, but is not repeated. The same applies to the discussion in Remark~\ref{rem:mem}.

\begin{algorithm}[H]
\caption{Step-down procedure for testing \eqref{eq:testingproblemts} based on approximated Gaussian critical values with estimated correlation matrix}
\label{algo:bootts2}
\begin{algorithmic}[1]
\State \textbf{Input:} $\tilde{V}_1,\hdots, \tilde{V}_{n_V}$,~$\lambda_{1,V}, \lambda_{1,V}', \lambda_{2,V}, \lambda_{2,V}'$,~$\overline{\eta}^V$, for~$V \in \{X, Y\}$ (and in the respective ranges as required),~$\alpha \in (0, 1)$, and~$B \in \N$ with~$Q_{B, 1-\alpha}((1-1/B)^{1/d}) < B$.
\If{$\max(\eps_V, \eps_V': V \in \{X, Y\}) \geq 1/2$}
\State $\hat{\Pi}_{B,e}^{\Delta} \gets [d]$
\Else
\State \textbf{Initialize:}~$k \gets 0$ and~$\hat{\Pi}_0 \gets[d]$
		\State $m(B, \alpha, d) \gets Q_{B, 1-\alpha}((1-1/B)^{1/d}) + 1$.
\State Draw~$z_1^{(k)}, \hdots, z_B^{(k)}$ independently from~$\mathsf{N}_{d}(0, \tilde{\Sigma}^{\Delta}_{0})$ (and independent of $z_j^{(l)}$ for~$j = 1, \hdots, B$ and~$l < k$).
\State $\hat{c}_{1-\alpha, \hat{\Pi}_k}(\tilde{\Sigma}^{\Delta}_{0}) \gets$ the~$m(B, \alpha, d)$th order statistic from~$\|z_1^{(k)}\|_{\infty, \hat{\Pi}_k}, \hdots, \|z_B^{(k)}\|_{\infty, \hat{\Pi}_k}$.
\While{$\cbr[1]{j\in \hat{\Pi}_k: |S^{\Delta}_{W,j}|\leq \hat{c}_{1-\alpha, \hat{\Pi}_k}(\tilde{\Sigma}^{\Delta}_{0})} \subsetneqq \hat{\Pi}_k$}  
\State $k \gets k+1$
\State $\hat{\Pi}_k \gets \cbr[1]{j\in \hat{\Pi}_{k-1}: |S^{\Delta}_{W,j}|\leq \hat{c}_{1-\alpha, \hat{\Pi}_{k-1}}(\tilde{\Sigma}^{\Delta}_{0})}$
\If{$\hat{\Pi}_k = \emptyset$} 
\State \textbf{break}
\EndIf
\State repeat steps 7 \& 8 
\EndWhile
\EndIf
\State \textbf{Return:} $\hat{\Pi}^{\Delta}_{B,e} \gets \hat{\Pi}_k$.
\end{algorithmic}
\end{algorithm}

As our last theoretical result, we analyze the FWER for Algorithm~\ref{algo:bootts2}. [In the formulation of Theorem~\ref{thm:mainthBts2} it is understood, cf.~also the definition of~$\tilde{\Sigma}^{\Delta}$ in~\eqref{eqn:covesttspo}, that the pooled covariance estimator is used only in case it is known and correct that~$\Sigma^X = \Sigma^Y$.]
\begin{theorem}\label{thm:mainthBts2}
Let Assumptions~\ref{ass:setting},~\ref{ass:settingY}, and~\ref{ass:indep} be satisfied, fix~$\alpha \in (0, 1)$, and choose a natural number~$B \geq 2$ such that~$Q_{B, 1-\alpha}((1-1/B)^{1/d}) < B$. Then, 
\begin{equation}\label{eqn:fwerBts2}
\P\left(J(\Delta) \not \subseteq \hat{\Pi}_{B,e}^{\Delta}  \right) \leq \alpha + B^{-1} +  C \times \left(\mathfrak{A}^X_{n_X} + \mathfrak{A}^Y_{n_Y}+\mathfrak{B}^{\Delta}+ \mathfrak{C}^{\Delta} \right),
\end{equation}
where $C$ is a constant depending only on~$b^V_1,b^V_2,\lambda_{1, V},\lambda_{2, V},\lambda_{1, V}',\lambda_{2, V}'$ and~$m_V$ for~$V \in \{X, Y\}$. The upper bound in~\eqref{eqn:fwerBts2} converges to~$\alpha$ in any asymptotic regime satisfying Assumption~\ref{ass:asyXY} under the additional condition that~$B \to \infty$.
\end{theorem}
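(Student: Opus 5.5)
The argument follows the Romano–Wolf step-down logic, with the idealized critical values of Algorithm~\ref{algo:bootts} replaced by the bootstrap order statistics. If $\max(\eps_V,\eps_V': V\in\{X,Y\})\geq 1/2$, then $\hat{\Pi}^{\Delta}_{B,e}=[d]$ and there is nothing to prove. We may also assume $J := J(\Delta)\neq\emptyset$. Otherwise we work on the event where all winsorization parameters lie in $(0,1/2)$.

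\textbf{Step 1 (step-down reduction).} Consider the first step $k$ at which some $j\in J$ is removed. Before that step, $J\subseteq\hat{\Pi}_k$. Define the event
\[
E_B := \bigl\{\hat{c}_{1-\alpha,\hat{\Pi}_k}(\tilde{\Sigma}^{\Delta}_0)\geq c_{1-\alpha,\hat{\Pi}_k}(\tilde{\Sigma}^{\Delta}_0)\ \text{for all steps } k \text{ actually reached}\bigr\}.
\]
On $E_B$, combining this with the monotonicity \eqref{eq:MonCVBts}, the removal of some $j\in J$ implies
\[
\max_{j\in J}|S^{\Delta}_{W,j}| > c_{1-\alpha,J}(\tilde{\Sigma}^{\Delta}_0).
\]
Hence
\[
\P(J\not\subseteq\hat{\Pi}^{\Delta}_{B,e}) \leq \P(E_B^c) + \P\Bigl(\max_{j\in J}|S^{\Delta}_{W,j}|>c_{1-\alpha,J}(\tilde{\Sigma}^{\Delta}_0)\Bigr).
\]

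\textbf{Step 2 (bootstrap error).} We bound $\P(E_B^c)\le B^{-1}$. There are at most $d$ steps. Conditionally on the data and on the previous draws, the set $\hat{\Pi}_k$ is fixed and the fresh draws $z^{(k)}$ are independent. The variables $\|z^{(k)}_i\|_{\infty,\hat{\Pi}_k}$ have a continuous distribution whose $(1-\alpha)$-quantile is $c_{1-\alpha,\hat{\Pi}_k}(\tilde{\Sigma}^{\Delta}_0)$ (cf.\ Footnote~\ref{foot:quantinv2}). Lemma~\ref{lem:auxB}, applied with $\beta=1-\alpha$ and $\gamma=(1-1/B)^{1/d}$, shows that each step succeeds with conditional probability at least $\gamma$. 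Chaining these conditional bounds over the at most $d$ steps (padding with dummy independent steps so that exactly $d$ factors appear) gives a probability of at least $\gamma^d=1-1/B$.

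\textbf{Step 3 (the deterministic-set bound).} The second term is exactly what is controlled in the proof of Theorem~\ref{thm:mainthBts}, and I would reuse it; it is the main obstacle. Since $J$ is non-random, Lemma~\ref{lem:coincts} gives $S^{\Delta}_{W,j}=S^{\Delta}_{W,S,j}$ for $j\in J$ whenever $\tilde{\sigma}^{\Delta}_j\neq0$. By Proposition~\ref{prop:covestimGRAM2s}, the event $\tilde{\sigma}^{\Delta}_j=0$ has probability at most $24(n_X+n_Y)/(n_Xn_Y)$, which is absorbed into $\mathfrak{A}$. Next, Proposition~\ref{prop:BSAlwaysvalidts} replaces the data-dependent quantile $c_{1-\alpha,J}(\tilde{\Sigma}^{\Delta}_0)$ by a deterministic quantile $c_{1-\alpha-\delta,J}(\Sigma^{\Delta}_0)$, with $\delta\lesssim\mathfrak{C}^{\Delta}$, outside an event of small probability. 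This is the step that costs $\mathfrak{C}^{\Delta}$, via Gaussian comparison together with the covariance concentration of Proposition~\ref{prop:covestimGRAM2s}. The resulting event is a complement of a hyperrectangle. Theorem~\ref{thm:HDGauss_studentizedts} therefore gives
\[
\P\Bigl(\max_{j\in J}|S^{\Delta}_{W,j}| > c_{1-\alpha,J}(\tilde{\Sigma}^{\Delta}_0)\Bigr) \leq \alpha + C\bigl(\mathfrak{A}^X_{n_X}+\mathfrak{A}^Y_{n_Y}+\mathfrak{B}^{\Delta}+\mathfrak{C}^{\Delta}\bigr).
\]

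\textbf{Step 4 (asymptotics).} The asymptotic statement follows because each of $\mathfrak{A}^X_{n_X}$, $\mathfrak{A}^Y_{n_Y}$, $\mathfrak{B}^{\Delta}$ and $\mathfrak{C}^{\Delta}$ tends to $0$ under Assumption~\ref{ass:asyXY}; this is the same verification as in Theorem~\ref{thm:mainthBts}. The remaining term $B^{-1}$ vanishes when $B\to\infty$.
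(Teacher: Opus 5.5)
Your argument is correct. Steps~3 and~4 are exactly the paper's: Case~1 of the proof of Theorem~\ref{thm:mainthBts}, via Proposition~\ref{prop:BSAlwaysvalidts} and Theorem~\ref{thm:HDGauss_studentizedts}, with the remaining cases absorbed into the constant. The difference lies in how you control the bootstrap error.

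\textbf{The paper's route.} The paper never compares with conditional quantiles at the random sets $\hat{\Pi}_k$. At the offending step $\hat{k}$ it uses $J(\Delta)\subseteq\hat{\Pi}_{\hat{k}}$ to bound $\hat{c}_{1-\alpha,\hat{\Pi}_{\hat{k}}}(\tilde{\Sigma}^{\Delta}_0)$ from below by the $m(B,\alpha,d)$th order statistic $o^{(\hat{k})}_{J(\Delta)}$ of $\|z^{(\hat{k})}_1\|_{\infty,J(\Delta)},\dots,\|z^{(\hat{k})}_B\|_{\infty,J(\Delta)}$. This is monotonicity at the level of the bootstrap sample rather than of the quantile function. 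It then bounds $o^{(\hat{k})}_{J(\Delta)}$ from below by the minimum over $d$ imagined bootstrap samples. Because $J(\Delta)$ is deterministic, these $d$ order statistics are conditionally i.i.d.\ given the data. Hence $1-\gamma^d=B^{-1}$ follows from independence and Lemma~\ref{lem:auxB} alone, with no need to track how the sets depend on earlier draws.

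\textbf{Your route.} Your event $E_B$ lives on the realized path, where $\hat{\Pi}_k$ depends on $z^{(0)},\dots,z^{(k-1)}$, so the per-step events are not independent. You condition on $\sigma(\text{data},z^{(0)},\dots,z^{(k-1)})$ and chain with the tower property, counting unreached steps as successes; at most $d$ draws occur. This is precisely what that dependence requires, and it is valid.

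\textbf{What your route buys.} You obtain a stronger, $J(\Delta)$-free statement, the one announced informally in the remark following Lemma~\ref{lem:auxB}: with probability at least $1-B^{-1}$, every empirical critical value actually used dominates the exact critical value at the same set. A short induction with \eqref{eq:MonCVBts} then gives $\hat{\Pi}^{\Delta}_{B}\subseteq\hat{\Pi}^{\Delta}_{B,e}$ on $E_B$. So you could invoke Theorem~\ref{thm:mainthBts} as a black box instead of re-entering its proof.
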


In comparison to the upper bounds in Theorems~\ref{thm:mainthGts} and~\ref{thm:mainthBts}, the upper bound in~\eqref{eqn:fwerBts2} also incorporates the term~$B^{-1}$, which, however, can be made as small as one wants by choosing~$B$ appropriately.

\section{Conclusion}

We introduced quantile-winsorized one- and two-sample multiple testing procedures for restrictions on high-dimensional means. As opposed to tests based on sample averages and covariances, our procedures are robust to adversarial contamination. Furthermore, the dimension of the observations can grow exponentially in sample size, despite only requiring slightly more than two moments to exist, which, even \emph{without} any adversarial robustness, is another benefit of basing inference on winsorized estimators. 

From a technical perspective, our results are based on the general multiple testing procedures in \cite{romano2005exact}, together with Gaussian approximation inequalities for the distribution of high-dimensional \emph{quantile-winsorized} means, which we extend (including covariance estimation) from the one-sample case considered in~\cite{kphdapprox} and~\cite{kprobfree} to the two-sample case in this article, and which may be of some independent interest. 

\bibliographystyle{ecta} 
\bibliography{ref}		

\pagebreak

\begin{appendix}
\numberwithin{equation}{section}
	
\section{Auxiliary results}\label{app:aux}
\numberwithin{equation}{section}	
The main technical tool underlying our proofs are Gaussian approximation results for the distribution of winsorized mean vectors. For the one-sample case, such results were recently obtained in \cite{kphdapprox}; see also~\cite{kprobfree} for generalizations. Let us re-state some of those results briefly in the form that they will be used in the present article (in particular, we equip some of the quantities with an index~$X$, which will be useful for our two-sample results, and we lighten notation somewhat at other places, e.g., we do not show the dependence of some quantities on sample size). We denote by~$\mathcal{H}$ the class of (generalized) hyperrectangles in~$\R^d$, where a (generalized) hyperrectangle is defined as
\begin{equation*}
\left\{x\in\R^d:a_j\leq x_j\leq b_j\text{ for all }j=1,\hdots, d\right\},
\end{equation*}
where~$-\infty\leq a_j\leq b_j\leq \infty$ for every~$j = 1, \hdots, d$. In addition to the vector of normalized winsorized means~$S^X_{W}$, which was defined in~\eqref{eqn:SXWdef}, we denote by~$S^X_{W,S}$ the vector of normalized and \emph{centered} winsorized means with elements 
\begin{equation*}
S^X_{W,S,j}=\frac{1}{\sqrt{n_X}\tilde{\sigma}^X_{j}}\sum_{i=1}^{n_X}\sbr[1]{\phi_{\hat\alpha_j^X,\hat\beta_j^X}(\tilde{X}_{i,j})-\mu^X_j},\qquad j=1,\hdots,d;
\end{equation*}
as in \cite{kphdapprox} we leave the quotients undefined if one of the variance estimators equals~$0$. 

We note for later use (recall that~$S^X_{W,j} = 0$ by definition if $\tilde{\sigma}^X_{j} = 0$, whereas~$S^X_{W,S,j}$ is undefined in that case) that
\begin{equation}\label{eqn:coinc}
S^X_{W,S,j} = S^X_{W,j} \quad \text{ if } \quad \tilde{\sigma}^X_{j} \neq 0 \text{ and } j \in J(\mu^X),
\end{equation}
and that, similarly,
\begin{equation*}
S^{\dagger, X}_{W,S,j} = S^{\dagger, X}_{W,j} \quad \text{ if } \quad j \in J(\mu^X).
\end{equation*}

We start with Theorem~\ref{thm:HDGauss}, a Gaussian approximation result for the distribution of~$S^{\dagger, X}_{W}$. This result is only a notational re-formulation of Theorem 2.1 in \cite{kphdapprox}, but otherwise identical. Note that the quantity~$\mathfrak{A}_{n_X}^X$, which appears in the upper bound in~\eqref{eq:Gaussapprox}, was defined in Equation~\eqref{eqn:abcdef}. [Recall that in the present paper it is assumed throughout that~$d\geq 2$ and that sample sizes are greater than~$3$, which is also maintained in \cite{kphdapprox}.] We emphasize again that throughout the present paper, we assume, without stating it explicitly (e.g., as in \cite{kphdapprox}), that~$\eps_X$ and~$\eps_X'$ are the quantities that were defined in~\eqref{eq:epsfamx}.
\begin{theorem}\label{thm:HDGauss}
Let Assumption~\ref{ass:setting} be satisfied. If~$\eps_X\in(0,1/2)$, then, for~$Z\sim \mathsf{N}_d(0, \Sigma^X)$ and~$C$ a constant depending only on~$b_1^X,b_2^X,\lambda_{1,X},\lambda_{2,X}$, and~$m_X$,
\begin{equation}\label{eq:Gaussapprox}
\rho^X_{W} :=\sup_{H\in\mc{H}}\envert[2]{\P\del[1]{S^{\dagger, X}_{W, S}\in H}-\P\del[1]{Z\in H}} 
\leq 	
C \times  \mathfrak{A}^X_{n_X}.
\end{equation}
In particular,~$\rho^X_{W} \to 0$ in any asymptotic regime satisfying Assumption~\ref{ass:asyX}.
\end{theorem}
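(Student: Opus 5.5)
The plan is to obtain the finite-sample inequality~\eqref{eq:Gaussapprox} by direct citation, and then to prove the asymptotic statement by checking that each of the three summands of~$\mathfrak{A}^X_{n_X}$ in~\eqref{eqn:abcdef} vanishes under Assumption~\ref{ass:asyX}.

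For the inequality itself I would take Theorem~2.1 of \cite{kphdapprox} and match notation. Their winsorization level is exactly our~$\eps_X$ from~\eqref{eq:epsfamx}, with the same quantile-based winsorization points~\eqref{eqn:winsp}. Their centered, $\sqrt{n}$-scaled winsorized mean is our~$S^{\dagger,X}_{W,S}$ from~\eqref{eqn:winsmeandefc}. Their moment conditions coincide with Assumption~\ref{ass:setting}. Their standing assumptions~$d\ge 2$ and~$n_X>3$ are maintained here as well. Their rate, written without explicit dependence on sample size, is~$\mathfrak{A}^X_{n_X}$. Once the correspondence is checked term by term, including that their constant depends only on~$b^X_1,b^X_2,\lambda_{1,X},\lambda_{2,X},m_X$, the bound follows with nothing further to prove.

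For the convergence statement, write~$m=m_X$, $n=n_X$, $\eta=\overline{\eta}_X$, and use~$\log(dn)\le \log d+\log n$ throughout.
\begin{itemize}
\item \emph{First summand.} The second part of~\eqref{eqn:convc}, $\log d=o(n^{(m-2)/(5m-2)})$, gives~$\log^{(5m-2)/m}(d)=o(n^{(m-2)/m})$. Polylogarithmic powers of~$\log n$ are also~$o(n^{(m-2)/m})$. Hence the bracket under the fourth root tends to zero.
\item \emph{Second summand, $\eta$-part.} The term~$\eta^{1-1/m}\sqrt{n\log d}\to 0$ is exactly the first condition in~\eqref{eqn:convc}.
\item \emph{Second summand, $\log$-part.} This equals~$\log^{1-1/m}(dn)\sqrt{\log d}\,n^{1/m-1/2}$, which is of order~$(\log(dn))^{(3m-2)/(2m)}n^{-(m-2)/(2m)}$. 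Inserting the growth bound on~$\log d$, it suffices that~$\tfrac{(m-2)(3m-2)}{2m(5m-2)}<\tfrac{m-2}{2m}$. This holds because~$(3m-2)/(5m-2)<1$.
\end{itemize}

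The third summand is where the two conditions must be combined, and I expect it to be the only slightly delicate step.
\begin{itemize}
\item \emph{Contamination part.} Write~$\eta^{(m-2)/(2m)}=(\eta^{(m-1)/m})^{(m-2)/(2(m-1))}$. The first condition then gives~$\eta^{(m-2)/(2m)}=o\big((n\log d)^{-(m-2)/(4(m-1))}\big)$. So~$\log(d)\,\eta^{(m-2)/(2m)}$ is~$o\big(\log^{1-(m-2)/(4(m-1))}(d)\,n^{-(m-2)/(4(m-1))}\big)$. Using~$\log d=o(n^{(m-2)/(5m-2)})$, this vanishes provided
\[
\frac{m-2}{5m-2}\cdot\frac{3m-2}{4(m-1)}<\frac{m-2}{4(m-1)},
\]
which again reduces to~$(3m-2)/(5m-2)<1$.
\item \emph{Logarithmic part.} The term~$\log(d)\,(\log(dn)/n)^{(m-2)/(2m)}$ is handled by the same exponent comparison as the second summand.
\end{itemize}
Hence~$\mathfrak{A}^X_{n_X}\to0$, and therefore~$\rho^X_W\to0$. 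Here~$\eps_X\in(0,1/2)$ holds eventually, since~$\eta\to0$ and~$\log(dn)/n\to0$.
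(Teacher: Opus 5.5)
Your proposal is correct and takes the same route as the paper. The paper gives no separate argument: it presents the result as a notational restatement of Theorem~2.1 in \cite{kphdapprox}, and the convergence claim is likewise inherited from that source (cf.\ the paper's use of Lemma~B.4 there). Your explicit exponent checks for the three summands of $\mathfrak{A}^X_{n_X}$ under Assumption~\ref{ass:asyX} are accurate, including the step combining both conditions for the $\log(d)\,\overline{\eta}_X^{(m_X-2)/(2m_X)}$ term and the remark that $\eps_X\in(0,1/2)$ holds eventually; they simply spell out what the paper delegates to the citation.
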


We move on to an analogous result for~$S^{X}_{W, S}$, which is a notational re-formulation of Theorem 4.1 in \cite{kphdapprox}. Note that the quantities~$\mathfrak{A}_{n_X}^X$ and~$\mathfrak{B}_{n_X}^X$, which appear in the upper bound in~\eqref{eq:HDGaussstudentized}, were defined in Equation~\eqref{eqn:abcdef}.
\begin{theorem}\label{thm:HDGauss_studentized}
Let Assumption~\ref{ass:setting} be satisfied.  If~$\eps_X,\eps_X'\in(0,1/2)$, then, for~$Z'\sim\mathsf{N}_d(0,\Sigma^X_0)$ and~$C$ a constant depending only on~$b^X_1,b^X_2,\lambda_{1,X},\lambda_{2,X}$, $\lambda_{1,X}',\lambda_{2,X}'$ and~$m_X$,
\begin{equation}\label{eq:HDGaussstudentized}
\rho^X_{W,S}
:=
\sup_{H\in\mc{H}}\envert[2]{\P\del[1]{S^X_{W,S}\in H}-\P\del[1]{Z'\in H}}
\leq
C \times \left(\mathfrak{A}^X_{n_X}+\mathfrak{B}^X_{n_X}\right).
\end{equation}
In particular, the upper bound in~\eqref{eq:HDGaussstudentized}, and hence~$\rho^X_{W,S}$, converges to~$0$ in any asymptotic regime satisfying Assumption~\ref{ass:asyX}.
\end{theorem}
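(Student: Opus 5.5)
Theorem~\ref{thm:HDGauss_studentized} is stated as a notational reformulation of Theorem~4.1 in \cite{kphdapprox}, so formally it suffices to match notation with that paper. If I had to re-derive it, I would deduce it from Theorem~\ref{thm:HDGauss} via a ``random rescaling'' sandwich argument. Denote by $D^X=\mathrm{diag}(\sigma^X_{2,1},\hdots,\sigma^X_{2,d})$ the population standard deviations. Write $S^X_{W,S,j} = S^{\dagger,X}_{W,S,j}/\tilde\sigma^X_j$. Note that $(D^X)^{-1}S^{\dagger,X}_{W,S}$ is approximated by $\mathsf{N}_d(0,\Sigma_0^X)$ at rate $C\mathfrak{A}^X_{n_X}$, since hyperrectangles are invariant under coordinatewise positive rescaling.

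\emph{Step 1: concentration of the variance estimators.} I would use a one-sample version of Proposition~\ref{prop:covestimGRAM2s}, proved by the same winsorization concentration arguments as in \cite{kphdapprox}. It provides an event $E$ with $\P(E^c)\lesssim n_X^{-1}$ on which $\max_j|\tilde\Sigma^X_{j,j}-\Sigma^X_{j,j}|\le C\mathfrak{d}^X_{n_X}$. Because $\min_j\sigma^X_{2,j}\ge b_1^X$, this gives, on $E$, $|\tilde\sigma^X_j/\sigma^X_{2,j}-1|\le\delta:=C\mathfrak{d}^X_{n_X}$ for all $j$. Without loss of generality $\delta\le1/2$; otherwise the claimed bound is trivial after enlarging $C$. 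In particular all $\tilde\sigma^X_j>0$ on $E$, so $S^X_{W,S}$ is well defined there.

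\emph{Step 2: sandwich.} Fix $H=\prod_j[a_j,b_j]$ and set $\tau=C'\sqrt{\log(dn_X)}$. I would first truncate: intersect $H$ with $[-\tau,\tau]^d$. The Gaussian tail bound gives $\P(\|Z'\|_\infty>\tau)\le n_X^{-1}$. Via Theorem~\ref{thm:HDGauss}, the same bound holds for $(D^X)^{-1}S^{\dagger,X}_{W,S}$ up to $C\mathfrak{A}^X_{n_X}$. After truncation all relevant thresholds are bounded by $\tau$ in absolute value. On $E$,
\[
\{S^X_{W,S}\in H\cap[-\tau,\tau]^d\}\subseteq\{(D^X)^{-1}S^{\dagger,X}_{W,S}\in H^{+}\},
\]
where $H^+$ enlarges each side of $H$ by $\delta\tau$. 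An analogous reverse inclusion holds with the shrunken rectangle $H^-$.

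\emph{Step 3: conclude.} Applying Theorem~\ref{thm:HDGauss} to $H^\pm$ (after rescaling) and then Nazarov's anti-concentration inequality for $Z'\sim\mathsf{N}_d(0,\Sigma^X_0)$, which has unit variances, gives
\[
|\P(Z'\in H^\pm)-\P(Z'\in H)|\lesssim\delta\tau\sqrt{\log d}\asymp\sqrt{\log(d)\log(dn_X)}\,\mathfrak{d}^X_{n_X}=\mathfrak{B}^X_{n_X}.
\]
The remaining errors are $\P(E^c)$ and the truncation terms, both $\lesssim n_X^{-1}$. These are dominated by $\mathfrak{A}^X_{n_X}$, because its first summand is at least of order $n_X^{-1/4+1/(2m_X)}$. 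This yields \eqref{eq:HDGaussstudentized}.

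The asymptotic statement then follows by checking that Assumption~\ref{ass:asyX} forces $\mathfrak{A}^X_{n_X}\to0$ and $\mathfrak{B}^X_{n_X}\to0$. I would bound $\eta^{1-2/m}$ and $(\log(dn)/n)^{\cdots}$ by powers of the two quantities in \eqref{eqn:convc}. The main obstacle is Step~2: the normalizers are data dependent and the thresholds $a_j,b_j$ may be unbounded. This is why the truncation at level $\sqrt{\log(dn_X)}$ is needed, and it is what produces the $\sqrt{\log(dn_X)}$ factor in $\mathfrak{B}^X_{n_X}$.
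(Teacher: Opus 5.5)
Your proposal is correct and follows the paper's approach. The paper gives no separate proof here: it cites Theorem~4.1 of \cite{kphdapprox}. Your re-derivation follows the route the paper itself takes for the two-sample analogue, Theorem~\ref{thm:HDGauss_studentizedts}: concentration of the variance estimators, control of $\|S^{\dagger,X}_{W,S}\|_\infty$ at level $\sqrt{\log(dn_X)}$ via the Gaussian approximation plus a Gaussian tail bound, and then a Nazarov-type perturbation step (there packaged as Lemma~A.4 of \cite{kprobfree}), which produces the $\mathfrak{B}^X_{n_X}$ term.
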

\begin{remark}[Exceptional set where~$\min_{j = 1, \hdots, d} \tilde{\sigma}^X_{j} = 0$]\label{rem:div0}
In \cite{kphdapprox} the quantity~$S^X_{W,S}$ is (on purpose) \emph{not} defined on the event where at least one diagonal element~$\tilde{\sigma}^X_{j}$ of~$\tilde{\Sigma}^X$ equals zero. The event~$\{S^X_{W,S} \in H\}$ appearing in~\eqref{eq:HDGaussstudentized} of Theorem~\ref{thm:HDGauss_studentized} therefore implicitly requires~$S^X_{W,S}$ to be well defined, and hence has to be interpreted as~$\{S^X_{W,S} \in H, \min_{j = 1, \hdots, d} \tilde{\sigma}^X_{j} > 0\}$. As a point of interest, the probability of the event~$\min_{j = 1, \hdots, d} \tilde{\sigma}^X_{j} = 0$ is controlled in Proposition~\ref{prop:BSAlwaysvalid} below, and is negligible in instances where the condition in~\eqref{eq:epscond'} is satisfied, cf.~also Remark~4.1 in \cite{kphdapprox}.
\end{remark}

\section{Proofs for results in Section~\ref{sec:onesample}}\label{app:proofsos}

\emph{For notational convenience, in the proofs in this section, we drop the index~$X$ whenever no confusion may arise; e.g., we write~$\eps$ instead of~$\eps_X$,~$n$ instead of~$n_X$,~$\mu$ instead of~$\mu^X$, and~$S_{W, S}$ instead of~$S_{W, S}^X$.} We start with the following auxiliary result which is a slightly improved version of Lemma C.2 of \cite{kprobfree}. Recall the definition of~$\mathfrak{d}_n$ (we also drop the index~$X$ here) from Equation~\eqref{eqn:abcdef}.
\begin{proposition}\label{prop:BSAlwaysvalid}
Let Assumption~\ref{ass:setting} be satisfied. There exist constants~$C$ and~$C^*$, depending only on~$b_1, b_2, \lambda_1', \lambda_2'$, and~$m$, such that under the condition
\begin{equation}\label{eq:epscond'}
\mathfrak{d}_n <\frac{b_1^2}{2C} \quad \text{ and } \quad	2\eps' +\frac{\log(d^2n)}{n}+\sqrt{\del[2]{\frac{\log(d^2n)}{n}}^2+4\frac{\log(d^2n)}{n}\eps'}<1,
\end{equation}
for~$\mathfrak{s}_n:=C^*\log(d)\mathfrak{d}^{1/2}_n$ with probability at least~$1-24/n$: $\min_{j=1,\hdots,d} \tilde{\sigma}^X_j > 0$ and
\begin{equation}\label{eq:critvalcontrol}
c_{(\beta-\mathfrak{s}_n)\vee 0, A}(\Sigma_{0})\leq c_{\beta,  A}(\tilde{\Sigma}_{0}) \leq c_{(\beta+\mathfrak{s}_n) \wedge 1, A}(\Sigma_{0}) \text{ for every } \emptyset \neq A \subseteq [d] \text{ and } \beta \in [0, 1].
\end{equation}
\end{proposition}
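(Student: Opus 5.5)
The argument has two ingredients: a covariance concentration bound for $\tilde\Sigma$, and a Gaussian comparison inequality for maxima. I would obtain both on one high-probability event and then convert the Kolmogorov-distance bound into the quantile sandwich \eqref{eq:critvalcontrol}.

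\textbf{Step 1 (covariance concentration).} I would use the one-sample covariance guarantee underlying Theorem~\ref{thm:HDGauss_studentized}, i.e.\ the one-sample analogue of Proposition~\ref{prop:covestimGRAM2s}, from \cite{kphdapprox} and Lemma C.2 of \cite{kprobfree}. Under the second condition in~\eqref{eq:epscond'} it gives an event $E$ with $\P(E)\geq 1-24/n$ on which
\[
\max_{j,k}|\tilde\Sigma_{j,k}-\Sigma_{j,k}|\leq C\mathfrak{d}_n .
\]
On $E$, the first condition in~\eqref{eq:epscond'} together with $\Sigma_{j,j}\geq b_1^2$ yields $\tilde\sigma_j^2\geq b_1^2-C\mathfrak{d}_n> b_1^2/2>0$. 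This proves the positivity claim, and it also gives $\tilde\Sigma_0=\tilde D^{-1}\tilde\Sigma\tilde D^{-1}$.

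\textbf{Step 2 (entrywise bound for the correlation matrices).} Still on $E$, I would bound $\max_{j,k}|\tilde\Sigma_{0,j,k}-\Sigma_{0,j,k}|\leq C'\mathfrak{d}_n$. The route is elementary: write
\[
\frac{a}{\sqrt{bc}}-\frac{a'}{\sqrt{b'c'}},
\]
split the difference into numerator and denominator perturbations, and use that the diagonals are bounded below by $b_1^2/2$ and that $|\Sigma_{j,k}|\leq b_2^2$.

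\textbf{Step 3 (comparison inequality).} For centered Gaussian vectors with unit variances I would apply the Gaussian comparison inequality for maxima over hyperrectangles (Chernozhukov--Chetverikov--Kato). With maximal entrywise covariance difference $\Delta_0$ it gives
\[
\sup_{t}\bigl|\P(\|\Sigma_0^{1/2}Z\|_{\infty,A}\leq t)-\P(\|\tilde\Sigma_0^{1/2}Z\|_{\infty,A}\leq t)\bigr|\leq C''\Delta_0^{1/2}\log(d),
\]
uniformly over all $A\subseteq[d]$, since $\log|A|\le\log d$ and $d\ge 2$. The comparison is applied conditionally on the data, with $Z$ independent of the data. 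On $E$ this becomes a uniform Kolmogorov distance $\leq C^*\log(d)\mathfrak{d}_n^{1/2}=\mathfrak{s}_n$. The main obstacle I expect is this step. The classical comparison bounds carry $\Delta_0^{1/3}\log^{2/3}d$, whereas the form $\log(d)\mathfrak{d}_n^{1/2}$ comes from the sharper version for unit-variance Gaussians, e.g.\ Proposition~2.1 of Chernozhukov--Chetverikov--Koike (2022), which is exactly the claimed ``slight improvement'' over Lemma C.2 of \cite{kprobfree}. Unit variances hold on both sides because both are correlation matrices, so that version applies.

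\textbf{Step 4 (quantile sandwich).} Let $F,\tilde F$ be the two (continuous) conditional cdfs, with $\sup|F-\tilde F|\leq \mathfrak{s}_n$. For $x=c_{\beta,A}(\tilde\Sigma_0)$ we have $F(x)\geq \tilde F(x)-\mathfrak{s}_n=\beta-\mathfrak{s}_n$, hence $c_{(\beta-\mathfrak{s}_n)\vee0,A}(\Sigma_0)\leq x$. Conversely, for $y=c_{(\beta+\mathfrak{s}_n)\wedge1,A}(\Sigma_0)$ we get $\tilde F(y)\geq \beta$, hence $x\leq y$. The boundary cases $\beta\in\{0,1\}$, and the case $\beta+\mathfrak{s}_n\geq 1$ where the upper value is $\infty$, follow from the conventions. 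Since the bound is uniform in $A$ and $\beta$, \eqref{eq:critvalcontrol} holds on $E$.
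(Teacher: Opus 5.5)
Your proposal is correct and follows the paper's proof. The paper obtains your Steps~1--3 (covariance concentration via Theorem~3.1 of \cite{kphdapprox}, passage to correlation matrices, and the $\Delta_0^{1/2}\log d$ Gaussian comparison giving the bound $\mathfrak{s}_n$ on an event of probability at least $1-24/n$ where $\min_j\tilde\sigma^X_j>0$) simply by invoking the argument in the proof of Lemma~C.2 of \cite{kprobfree}, and then performs the same quantile inversion as your Step~4. The only quibble is your aside identifying the ``slight improvement'' with the sharper comparison rate: the paper takes $\mathfrak{s}_n$ directly from Lemma~C.2's argument, so the improvement presumably concerns the form of the conclusion (e.g.\ positivity of the $\tilde\sigma^X_j$ on the event, uniformity over $A$ and $\beta$) rather than the rate, and this has no bearing on the validity of your argument.
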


\begin{proof}
We argue as in the proof of Lemma~C.2 in Appendix~C of \cite{kprobfree}, cf.~also Theorem~3.1 in \cite{kphdapprox}, to obtain (under the maintained assumptions, with a suitable choice of the constants~$C$ and~$C^*$ as indicated in the present Proposition's statement, and noting that the argument given in that proof does not depend on how one defines~$\tilde{\Sigma}_{0}$ on the set where one of the diagonal entries of~$\tilde{\Sigma}$ vanishes) that for~$Z \sim \mathsf{N}_d(0, I_d)$ (independent of~$\tilde{X}_1,\hdots,\tilde{X}_{n_X}$) and on an event~$G$, say, with probability greater than or equal to~$1-24/n$, it holds that
\begin{equation*}
\sup_{H\in\mc{H}}\envert[2]{\P\del[1]{\Sigma_{0}^{1/2}Z\in H}-\P\del[1]{\tilde{\Sigma}_{0}^{1/2}Z\in H\mid\tilde{X}_1,\hdots,\tilde{X}_n}}\leq \mathfrak{s}_n \quad \text{ and } \quad \min_{j = 1, \hdots, d} \tilde{\sigma}_{j} >0.
\end{equation*} 
On~$G$, for every~$\emptyset \neq A \subseteq [d]$ and~$\beta \in [0, 1]$, it holds that
\begin{align*}
&\P\del[2]{\max_{j \in A} |(\tilde{\Sigma}_{0}^{1/2}Z)_j|\leq c_{(\beta+\mathfrak{s}_n) \wedge 1, A}(\Sigma_{0})\mid \tilde{X}_1,\hdots, \tilde{X}_n}  \\ 
& \geq  \beta \geq \P\del[2]{\max_{j \in A} |(\tilde{\Sigma}_{0}^{1/2}Z)_j|\leq c_{(\beta-\mathfrak{s}_n)\vee 0, S}(\Sigma_{0})\mid \tilde{X}_1,\hdots, \tilde{X}_n},
\end{align*}
and hence
$c_{(\beta-\mathfrak{s}_n)\vee 0, A}(\Sigma_{0}) \leq c_{\beta,  A}(\tilde{\Sigma}_{0}) \leq c_{(\beta+\mathfrak{s}_n) \wedge 1, A}(\Sigma_{0})$
by definition of~$c_{\beta,  A}(\tilde{\Sigma}_{0})$,~cf.~\eqref{eqn:bootqdef}.
\end{proof}

\begin{proof}[Proof of Theorem~\ref{thm:mainthG}:]
In case~$\max(\eps, \eps') \geq 1/2$, by definition~$\hat{\Pi}_{G} = [d] \supseteq J(\mu)$, so that in this case~$\P(J(\mu) \not \subseteq \hat{\Pi}_G ) = 0$, and the inequality in~\eqref{eqn:fwerG} trivially holds. Hence, we assume from now on that~$\max(\eps, \eps') < 1/2$. We also assume that~$J(\mu) \neq \emptyset$, since otherwise $\P(J(\mu) \not \subseteq \hat{\Pi}_G )  = 0$, and, again, the inequality in~\eqref{eqn:fwerG} trivially holds. Under these assumptions, the relation~$J(\mu) \not \subseteq \hat{\Pi}_G $ implies the existence of an index~$\hat{j} = j(\tilde{X}_1,\hdots, \tilde{X}_n)\in J(\mu)$ and a step~$\hat{k}=k(\tilde{X}_1,\hdots, \tilde{X}_n)$, say, in the while loop of Algorithm~\ref{algo:gauss}, at which
\begin{equation*}
|S_{W,\hat{j}}|>c_{1-\alpha,\hat{\Pi}_{\hat{k}}} \quad \text{ and } \quad J(\mu)\subseteq \hat{\Pi}_{\hat{k}}.
\end{equation*}
The monotonicity relationship~\eqref{eq:MonCVG} therefore implies
\begin{equation*}
\max_{j\in J(\mu)}|S_{W,j}|
\geq
|S_{W,\hat{j}}|
> 
c_{1-\alpha,\hat{\Pi}_{\hat{k}}}
\geq 
c_{1-\alpha,J(\mu)}.
\end{equation*}
We can hence conclude that
\begin{equation*}
\P\left(J(\mu) \not \subseteq \hat{\Pi}_G \right)
\leq
\P \left(\max_{j\in J(\mu)}|S_{W,j}|>c_{1-\alpha,J(\mu)}\right) = 1 - \P \left( S_{W} \in H(c_{1-\alpha,J(\mu)}, \mu)\right),
\end{equation*}
where we set
\begin{equation}\label{eqn:genhypproof}
H(c, \mu) := \left\{y \in \R^d: a_j(c, \mu) \leq y_j \leq b_j(c, \mu), ~j = 1, \hdots, d  \right\},
\end{equation} and where
\begin{equation}\label{eqn:genhypproofab}
a_j(c, \mu) :=
\begin{cases}
- \infty & \text{ if } j \notin J(\mu), \\
- c & \text{ if }  j \in J(\mu) ,
\end{cases}
\quad \text{ and } \quad 
b_j(c, \mu) :=
\begin{cases}
\infty & \text{ if } j \notin J(\mu), \\
c & \text{ if } j \in J(\mu) .
\end{cases}
\end{equation}
Obviously, it holds that (recall the statement in~\eqref{eqn:coinc})
\begin{align*}
\P \left( S_{W} \in H(c_{1-\alpha,J(\mu)}, \mu)\right) & \geq \P \left( S_{W} \in H(c_{1-\alpha,J(\mu)}, \mu), \min_{j = 1, \hdots, d} \tilde{\sigma}_{j} > 0\right) \\ 
& = \P \left( S_{W, S} \in H(c_{1-\alpha,J(\mu)}, \mu), \min_{j = 1, \hdots, d} \tilde{\sigma}_{j} > 0\right)
\end{align*}
Thus, it follows from Theorem~\ref{thm:HDGauss_studentized}, cf.~also Remark~\ref{rem:div0}, that for~$Z\sim\mathsf{N}_d(0,I_d)$
\begin{align*}
\P \left( S_{W} \in H(c_{1-\alpha,J(\mu)}, \mu)\right) & \geq \P(\Sigma_0^{1/2}Z \in H(c_{1-\alpha,J(\mu)}, \mu)) - C \times (\mathfrak{A}_n+\mathfrak{B}_n) \\
& = \P(\max_{j \in J(\mu)} |(\Sigma_0^{1/2} Z)_j|  \leq c_{1-\alpha,J(\mu)}) - C \times  (\mathfrak{A}_n+\mathfrak{B}_n) \\
& \geq (1-\alpha) - C \times  (\mathfrak{A}_n+\mathfrak{B}_n),
\end{align*}
where~$C$ is a constant depending only on~$b_1,b_2,\lambda_1,\lambda_2,\lambda_1',\lambda_2'$ and~$m$, and where the second inequality follows from~\eqref{eqn:ksiquant}. The remaining (convergence) statement follows from the last statement in Theorem~\ref{thm:HDGauss_studentized}.
\end{proof}

\begin{proof}[Proof of Theorem~\ref{thm:mainthB}:]
As in the proof of Theorem~\ref{thm:mainthG}, we assume, without loss of generality, that~$\max(\eps, \eps') < 1/2$ and~$J(\mu) \neq \emptyset$. We then argue similarly as in the proof of Theorem~\ref{thm:mainthG}, but now based on the monotonicity relation in~\eqref{eq:MonCVB}, to conclude 
\begin{equation}\label{eqn:redstep}
\P\left(J(\mu) \not \subseteq \hat{\Pi}_B\right)
\leq
\P \left(\max_{j\in J(\mu)}|S_{W,j}|>c_{1-\alpha,  J(\mu)}(\tilde{\Sigma}_{0})\right) = 1 - \P \left( S_{W} \in H(c_{1-\alpha,  J(\mu)}(\tilde{\Sigma}_{0}), \mu)\right),
\end{equation}
cf.~Equations~\eqref{eqn:genhypproof} and~\eqref{eqn:genhypproofab} for the definition of~$H(c_{1-\alpha,  J(\mu)}(\tilde{\Sigma}_{0}), \mu)$, which is data-\emph{dependent} through~$c_{1-\alpha,  J(\mu)}(\tilde{\Sigma}_{0})$. Hence, in contrast to the argument used to establish Theorem~\ref{thm:mainthG} above, we \emph{cannot} directly apply the Gaussian approximation result in Theorem~\ref{thm:HDGauss_studentized}, but we first relate~$c_{1-\alpha,  J(\mu)}(\tilde{\Sigma}_{0})$ to suitably chosen quantiles from~$c_{\cdot,  J(\mu)}(\Sigma_{0})$ via Proposition~\ref{prop:BSAlwaysvalid}. We argue as follows: 

\textbf{Case 1: condition in~\eqref{eq:epscond'} is satisfied.} By Proposition~\ref{prop:BSAlwaysvalid}, with probability at least~$1-24/n$, 
\begin{equation}\label{eqn:critvalrel}
\min_{j=1,\hdots,d} \tilde{\sigma}_j > 0 ~~ \text{ and } ~~
c_{(1-\alpha-\mathfrak{s}_n)\vee 0, J(\mu)}(\Sigma_{0})\leq c_{1-\alpha,  J(\mu)}(\tilde{\Sigma}_{0}) \leq c_{(1-\alpha +\mathfrak{s}_n) \wedge 1, J(\mu)}(\Sigma_{0}),
\end{equation}
and hence~$$\min_{j=1,\hdots,d} \tilde{\sigma}_j > 0 ~~ \text{ and } ~~ H(c_{(1-\alpha-\mathfrak{s}_n)\vee 0, J(\mu)}(\Sigma_{0}), \mu) \subseteq H(c_{1-\alpha,  J(\mu)}(\tilde{\Sigma}_{0}), \mu).$$ Recalling the statement in~\eqref{eqn:coinc}, we see that
\begin{align*}
&\P \left( S_{W} \in H(c_{1-\alpha,  J(\mu)}(\tilde{\Sigma}_{0}), \mu)\right)  \\ 
&\geq  \P \left( S_{W, S} \in H(c_{(1-\alpha-\mathfrak{s}_n)\vee 0, J(\mu)}(\Sigma_{0}), \mu),  \{\eqref{eqn:critvalrel}\}\right), \\
& \geq  \P \left( S_{W, S} \in H(c_{(1-\alpha-\mathfrak{s}_n)\vee 0, J(\mu)}(\Sigma_{0}), \mu), \min_{j = 1, \hdots, d} \tilde{\sigma}_{j}> 0\right) - \frac{24}{n};
\end{align*}
(where we used~$\P(A, B) \geq \P(A) + \P(B) - 1$ with $B = \{\eqref{eqn:critvalrel}\}$ to obtain the last inequality). Thus, it follows from Theorem~\ref{thm:HDGauss_studentized}, cf.~also Remark~\ref{rem:div0}, that for~$Z\sim\mathsf{N}_d(0,I_d)$
\begin{align*}
& \P \left( S_{W} \in H(c_{1-\alpha,  J(\mu)}(\tilde{\Sigma}_{0}), \mu)\right) \\ & \geq \P(\Sigma_0^{1/2}Z \in H(c_{(1-\alpha-\mathfrak{s}_n)\vee 0, J(\mu)}(\Sigma_{0}), \mu)) - \frac{24}{n} - C \times (\mathfrak{A}_n+\mathfrak{B}_n) \\
& = \P(\max_{j \in J(\mu)} |(\Sigma_0^{1/2} Z)_j|  \leq c_{(1-\alpha-\mathfrak{s}_n)\vee 0,J(\mu)}(\Sigma_{0})) - \frac{24}{n} - C \times  (\mathfrak{A}_n+\mathfrak{B}_n) \\
& \geq (1-\alpha) - \mathfrak{s}_n - \frac{24}{n}  - C \times  (\mathfrak{A}_n+\mathfrak{B}_n) \geq (1-\alpha) - C \times  (\mathfrak{A}_n+\mathfrak{B}_n + \mathfrak{C}_n),
\end{align*}
where~$C$ (which was updated in the last inequality to bound the terms~$\mathfrak{s}_n$ and~$24/n$) is a constant depending only on~$b_1,b_2,\lambda_1,\lambda_2,\lambda_1',\lambda_2'$ and~$m$, and where the second inequality follows from~\eqref{eqn:ksiquant}.

For the remaining cases we adapt arguments from the proof of Theorem B.2 in \cite{kprobfree}:

\textbf{Case 2:~$\mathfrak{d}_n \geq b_1^2/(2C)$.}
In this case, the upper bound in Theorem~\ref{thm:mainthB} exceeds~$1$ for every constant exceeding~$2C/b_1^2$, so that updating the constant (if necessary, and without violating the required dependence relations) takes care of this case (cf.~\eqref{eqn:abcdef} for the definition of~$\mathfrak{d}_n$ and the quantities showing up in the upper bound in Theorem~\ref{thm:mainthB}). 

\textbf{Case 3:~$2\eps' +\frac{\log(d^2n)}{n}+\sqrt{\del[2]{\frac{\log(d^2n)}{n}}^2+4\frac{\log(d^2n)}{n}\eps'}\geq1$.} It follows from~$\frac{\log(d^2n)}{n}\leq {\lambda_2'}^{-1}{\eps'}$ that, for a constant~$C_2$ depending only on~$\lambda_2'$, the expression on the left-hand side of the second inequality in~\eqref{eq:epscond'} is bounded from above by
\begin{align*}
2\eps'+{\lambda_2'}^{-1}{\eps'}+\sqrt{{\lambda_2'}^{-2}{\eps'}^2+4{\lambda_2'}^{-1}{\eps'}^2}
\leq
C_2\eps'.	
\end{align*}
It hence follows that~$C_2 \eps' \geq 1$, and hence, by definition of~$\eps'$, cf.~\eqref{eq:epsfamx},
\begin{align*}
\overline{\eta}\geq \frac{1}{2C_2\lambda_1'}\qquad\text{or}\qquad \frac{\log(dn)}{n}
=
\frac{\log([dn]^2)/2}{n}
\geq
\frac{\log(d^2n)}{2n}
\geq
\frac{1}{4C_2\lambda_2'}.
\end{align*} 
As in Case 2 above, the upper bound in Theorem~\ref{thm:mainthB} remains valid also in Case 3 by suitably adjusting~$C$ (if necessary, and without violating the required dependence relations). 

The remaining (convergence) statement follows from the last statement in Theorem~\ref{thm:HDGauss_studentized} together with Lemma B.4 of \cite{kphdapprox}.
\end{proof}

\begin{proof}[Proof of Lemma~\ref{lem:auxB}:]
Since~$Q_{B, \beta}(\gamma) $ is always a natural number, it holds that
\begin{equation*}
\sum_{i = 1}^B \mathds{1}\{z_i < Q_F(\beta)\} \leq Q_{B, \beta}(\gamma)  \quad  \Rightarrow \quad z^*_{ Q_{B, \beta}(\gamma)  + 1} \geq Q_F(\beta).
\end{equation*}
The sum to the left is a binomially distributed random variable with sample size~$B$ and success probability~$\beta$ (recall that~$F$ is assumed to be continuous). Hence, the probability of the event to the left is at least~$\gamma$.
\end{proof}

\begin{proof}[Proof of Theorem~\ref{thm:mainthB2}:]
We adapt the argument used in the proof of Theorem~\ref{thm:mainthB} and combine it with Lemma~\ref{lem:auxB} (note that we only need to consider Case 1 in the proof of Theorem~\ref{thm:mainthB}, i.e., when the condition in~\eqref{eq:epscond'} is satisfied, due to the structure of the bound claimed in Theorem~\ref{thm:mainthB2}; cf.~the argument used to establish the other cases in the proof of Theorem~\ref{thm:mainthB} for details). We assume, without loss of generality, that~$\max(\eps, \eps') < 1/2$ and~$J(\mu) \neq \emptyset$. We next note that the relation~$J(\mu) \not \subseteq \hat{\Pi}_{B,e}$ implies the existence of an index~$\hat{j} = j(\tilde{X}_1,\hdots, \tilde{X}_n)\in J(\mu)$ and a step~$\hat{k}=k(\tilde{X}_1,\hdots, \tilde{X}_n)$, say, in the while loop of Algorithm~\ref{algo:boot2}, at which
\begin{equation*}
|S_{W,\hat{j}}|> \hat{c}_{1-\alpha, \hat{\Pi}_{\hat{k}}}(\tilde{\Sigma}^{X}_{0}) \quad \text{ and } \quad J(\mu)\subseteq \hat{\Pi}_{\hat{k}}.
\end{equation*}
Recall that by definition~$\hat{c}_{1-\alpha, \hat{\Pi}_{\hat{k}}}(\tilde{\Sigma}^{X}_{0})$ is the~$m(B, \alpha, d)$th order statistic from the (conditionally) i.i.d.~sample~$$\|z_1^{(\hat{k})}\|_{\infty, \hat{\Pi}_{\hat{k}}}, \hdots, \|z_B^{(\hat{k})}\|_{\infty, \hat{\Pi}_{\hat{k}}},$$ which obviously is not smaller than the~$m(B, \alpha, d)$th order statistic~$o_{J(\mu)}^{(\hat{k})}$, say, of the (conditionally) i.i.d.~sample~$$\|z_1^{(\hat{k})}\|_{\infty, J(\mu)}, \hdots, \|z_B^{(\hat{k})}\|_{\infty, J(\mu)}.$$ We hence obtain
\begin{equation*}
\max_{j\in J(\mu)}|S_{W,j}|
\geq
|S_{W,\hat{j}}|
> 
\hat{c}_{1-\alpha, \hat{\Pi}_{\hat{k}}}(\tilde{\Sigma}^{X}_{0})
\geq 
o_{J(\mu)}^{(\hat{k})},
\end{equation*}
and conclude (with the argument used in Case 1 in the proof of Theorem~\ref{thm:mainthB} applied to the first summand in the first subsequent upper bound) that for a suitable constant, with the required dependence properties,
\begin{equation}
\begin{aligned}\label{eqn:redstep}
\P\left(J(\mu) \not \subseteq \hat{\Pi}_{B,e}\right)
& \leq
\P \left(\max_{j\in J(\mu)}|S_{W,j}|>c_{1-\alpha,  J(\mu)}(\tilde{\Sigma}_{0})\right) + \P\left(c_{1-\alpha,  J(\mu)}(\tilde{\Sigma}_{0}) > o_{J(\mu)}^{(\hat{k})}\right) \\
& \leq \alpha + C \times \left( \mathfrak{A}^X_{n_X} + \mathfrak{B}^X_{n_X} + \mathfrak{C}^X_{n_X} \right) + \P\left(c_{1-\alpha,  J(\mu)}(\tilde{\Sigma}_{0}) > o_{J(\mu)}^{(\hat{k})}\right).
\end{aligned}
\end{equation}
It remains to verify that
\begin{equation}
\P\left(c_{1-\alpha,  J(\mu)}(\tilde{\Sigma}_{0}) > o_{J(\mu)}^{(\hat{k})}\right) \leq B^{-1}.
\end{equation}
Clearly, there are at most~$d$ steps in Algorithm~\ref{algo:boot2}, and we can implicitly imagine~$j = 1, \hdots, d$ independent underlying samples of i.i.d.~random vectors~$z_1^{(j)}, \hdots, z_B^{(j)} \sim \mathsf{N}_{d}(0, \tilde{\Sigma}^{X}_{0})$ conditionally on~$\tilde{X}_1, \hdots, \tilde{X}_n$ (where not all of those samples may be ``used'' in every instance of calling the algorithm, depending on whether the algorithm stops in fewer than~$d$ steps or not). Let us denote by~$o_{J(\mu)}^{(k)}$ the $m(B, \alpha, d)$th order statistic from~$$\|z_1^{(k)}\|_{\infty, J(\mu)}, \hdots, \|z_B^{(k)}\|_{\infty, J(\mu)}.$$ Clearly, by the independence and distributional assumptions imposed on the random variables generated in Algorithm~\ref{algo:boot2},
\begin{align*}
\P\left(c_{1-\alpha,  J(\mu)}(\tilde{\Sigma}_{0}) > o_{J(\mu)}^{(\hat{k})} \mid \tilde{X}_1, \hdots, \tilde{X}_n \right) &\leq \P\left(c_{1-\alpha,  J(\mu)}(\tilde{\Sigma}_{0}) > \min_{j = 1}^d o_{J(\mu)}^{(j)} \mid \tilde{X}_1, \hdots, \tilde{X}_n  \right) \\
&= 1 - \P\left(c_{1-\alpha,  J(\mu)}(\tilde{\Sigma}_{0}) \leq \min_{j = 1}^d o_{J(\mu)}^{(j)} \mid \tilde{X}_1, \hdots, \tilde{X}_n  \right) \\
&= 1 - \P^d\left(c_{1-\alpha,  J(\mu)}(\tilde{\Sigma}_{0}) \leq o_{J(\mu)}^{(1)} \mid \tilde{X}_1, \hdots, \tilde{X}_n  \right).
\end{align*}
But Lemma~\ref{lem:auxB} applied (conditionally on~$\tilde{X}_1, \hdots, \tilde{X}_n$) and with~$z_i := \|z_i^{(1)}\|_{\infty, J(\mu)}$ for~$i = 1, \hdots, B$,~$\beta = 1-\alpha$ and~$\gamma = (1-1/B)^{1/d}$, noting that then~$c_{1-\alpha,  J(\mu)}(\tilde{\Sigma}_{0}) = Q_F(1-\alpha)$ (in the notation of Lemma~\ref{lem:auxB}), delivers 
\begin{align*}
(1-1/B)^{1/d} = \gamma &\leq 
\P\left(z^*_{Q_{B, \beta}(\gamma) + 1} \geq Q_F(\beta) \mid \tilde{X}_1, \hdots, \tilde{X}_n \right) \\ &=  
\P\left(o_{J(\mu)}^{(1)} \geq c_{1-\alpha,  J(\mu)}(\tilde{\Sigma}_{0}) \mid \tilde{X}_1, \hdots, \tilde{X}_n \right).
\end{align*}
\end{proof}

\section{Proofs for results in Section~\ref{sec:ts}}\label{app:proofsts}

\begin{proof}[Proof of Proposition~\ref{prop:covestimGRAM2s}:]
Theorem 3.1 in \cite{kphdapprox} shows that for every~$V \in \{X, Y\}$, there exists a constant~$C_V$, say, depending only on~$b_2^V,\lambda_{1,V}',\lambda_{2,V}'$, and~$m_V$, such that 
\begin{equation*}
\P\del[4]{\max_{1\leq j,k\leq d}\envert[1]{\tilde{\Sigma}^{V}_{j,k}-\Sigma^{V}_{j,k}}> C_V  \mathfrak{d}_{n_V}^V }
\leq \frac{24}{n_V}.
\end{equation*}
Recall that~$\Sigma^{\Delta} = w_Y^2 \Sigma^X + w_X^2 \Sigma^Y$, as well as
\begin{equation*}
\tilde{\Sigma}^{\Delta} := \begin{cases} w^2_Y \tilde{\Sigma}^X + w^2_X\tilde{\Sigma}^Y, & \text{ in case it is unknown whether or not } \Sigma^X \neq \Sigma^Y, \\
	w_X^2 \tilde{\Sigma}^X + w_Y^2 \tilde{\Sigma}^Y, & \text{ in case it is known that } \Sigma^X = \Sigma^Y,
\end{cases}
\end{equation*}
where we refer to the second case as the pooled case in this proof.

\emph{In the non-pooled case,} obviously,
\begin{equation*}
\max_{1\leq j,k\leq d}\envert[1]{\tilde{\Sigma}^{\Delta}_{j,k}-\Sigma^{\Delta}_{j,k}} \leq w_Y^2 \max_{1\leq j,k\leq d}\envert[1]{\tilde{\Sigma}^{X}_{j,k}-\Sigma^{X}_{j,k}} + w_X^2 \max_{1\leq j,k\leq d}\envert[1]{\tilde{\Sigma}^{Y}_{j,k}-\Sigma^{Y}_{j,k}}.
\end{equation*}
\emph{In the pooled case,}  we can write~$\Sigma^{\Delta} = w_X^2 \Sigma^X + w_Y^2 \Sigma^Y$ and hence
\begin{equation*}
\max_{1\leq j,k\leq d}\envert[1]{\tilde{\Sigma}^{\Delta}_{j,k}-\Sigma^{\Delta}_{j,k}} \leq w_X^2 \max_{1\leq j,k\leq d}\envert[1]{\tilde{\Sigma}^{X}_{j,k}-\Sigma^{X}_{j,k}} + w_Y^2 \max_{1\leq j,k\leq d}\envert[1]{\tilde{\Sigma}^{Y}_{j,k}-\Sigma^{Y}_{j,k}}.
\end{equation*}

In both cases, it follows (together with the initial observation in this proof) that the probability in~\eqref{eq:covestimGramts}, with~$C := \max(C_X, C_Y)$, is bounded from above by
\begin{equation*}
\P\del[4]{\max_{1\leq j,k\leq d}\envert[1]{\tilde{\Sigma}^{X}_{j,k}-\Sigma^{X}_{j,k}}> C  \mathfrak{d}_{n_X}^X } + \P\del[4]{\max_{1\leq j,k\leq d}\envert[1]{\tilde{\Sigma}^{Y}_{j,k}-\Sigma^{Y}_{j,k}}> C  \mathfrak{d}_{n_Y}^Y } \leq 24 \times  \frac{n_Y + n_X}{n_Xn_Y}.
\end{equation*}
\end{proof}

\begin{proof}[Proof of Lemma~\ref{lem:coincts}:]
Equation~\eqref{eqn:equiv} shows that for~$j \in J(\Delta)$, i.e., if~$\Delta_j = 0$, we have~$S^{\dagger, \Delta}_{W, j} = S^{\dagger, \Delta}_{W, S, j}$ Therefore, if also~$\tilde{\sigma}^{{\Delta}}_{j} \neq 0$, then~\eqref{eqn:coincts} follows from~\eqref{eqn:sdagdeltN}.
\end{proof}

\begin{proof}[Proof of Theorem~\ref{thm:HDGaussts}:]
Fix~$H \in \mathcal{H}$, let~$Z_1 \sim \mathsf{N}_d(0, \Sigma^X)$ and~$Z_2 \sim \mathsf{N}_d(0,  \Sigma^Y)$ be independent, and let $(Z_1, Z_2)$ be independent of~$(\tilde{X}_1, \hdots, \tilde{X}_{n_X}, \tilde{Y}_1, \hdots, \tilde{Y}_{n_Y})$. Clearly,~$Z := w_Y Z_1 - w_X Z_2 \sim \mathsf{N}_d(0, \Sigma^{\Delta})$. Recall from Assumption~\ref{ass:indep} that the samples~$(\tilde{X}_1, \hdots, \tilde{X}_{n_X})$ and~$(\tilde{Y}_1, \hdots, \tilde{Y}_{n_Y})$ are independent. Therefore, the Markov kernel~$(B, s) \mapsto \P(w_Y S^{\dagger, X}_{W, S} \in B + s)$ is a regular conditional distribution of~$S^{\dagger, \Delta}_{W, S} = w_Y S^{\dagger, X}_{W, S} - w_X S^{\dagger, Y}_{W, S}$ given~$w_X S^{\dagger, Y}_{W, S}$ (cf.~Equations~\eqref{eqn:sdagdelt} and~\eqref{eqn:winsmeandefc}). Likewise~$(B, s) \mapsto \P(w_Y Z_1 \in B + s)$ is a regular conditional distribution of~$w_Y Z_1 - w_X S^{\dagger, Y}_{W, S}$ given~$w_X S^{\dagger, Y}_{W, S}$. Denoting the distribution of~$w_X S^{\dagger, Y}_{W, S}$ by~$\nu$, say, it holds that
\begin{align*}
& \P(S^{\dagger, \Delta}_{W, S} \in H) - \P(Z \in H) \\ 
& =   \left[\int \left( \P(w_Y S^{\dagger, X}_{W, S} \in H + s) - \P(w_Y Z_1 \in H + s) \right) d\nu(s) \right] \\ & \hspace{2cm} + \left[\P(w_Y Z_1 - w_X S^{\dagger, Y}_{W, S} \in H) - \P(w_Y Z_1 - w_X Z_2 \in H)\right] =: [I] + [II]. 
\end{align*}
Clearly,~$\left| [I] \right| \leq \rho_W^X$, cf.~the notation introduced in Theorem~\ref{thm:HDGauss}. Furthermore, the Markov kernels~$$(B, z) \mapsto \P(- w_X S^{\dagger, Y}_{W, S} \in B - z) \quad \text{ and } \quad (B, z) \mapsto \P(-w_X Z_2 \in B - z)$$ constitute regular conditional distributions of~$w_Y Z_1-w_X S_{W, S}^{\dagger, Y}$ and~$w_Y Z_1-w_X Z_2$, respectively, given~$w_Y Z_1$, so that, writing~$\nu^*$ for the distribution of~$w_Y Z_1$, it holds that
s
\begin{align*}
\left| [II] \right| \leq \int \left|\P(- w_X S^{\dagger, Y}_{W, S} \in H - z) - \P(- w_X Z_2 \in H - z) \right|d\nu^*(z) \leq \rho_W^Y.
\end{align*}
Hence,
\begin{equation}\label{eqn:boundtsG}
\left| \P(S^{\dagger, \Delta}_{W, S} \in H) - \P(Z \in H) \right| \leq \left| [I] \right| + \left| [II] \right| \leq \sum_{V \in \{X, Y\}} \rho_W^V,
\end{equation}
and we can therefore apply Theorem~\ref{thm:HDGauss} to the sample of~``$Y$'' \emph{and} the sample of~``$X$'' to conclude [note that the convergence statement follows from the last part of Theorem~\ref{thm:HDGauss}].
\end{proof}

\begin{remark}\label{rem:carryoverGts}
The inequality in~\eqref{eqn:boundtsG} isolates the argument used to carry over Gaussian approximations from the one-sample to the two-sample situation. We note here that the argument establishing the upper bound in~\eqref{eqn:boundtsG} in the proof of Theorem~\ref{thm:HDGaussts} only made use of the (independence) Assumption~\ref{ass:indep}, but not of Assumptions~\ref{ass:setting} or~\ref{ass:settingY}. 
\end{remark}

\begin{proof}[Proof of Theorem~\ref{thm:HDGauss_studentizedts}:]
We start with~\eqref{eq:HDGaussstudentizedts}. Write~$\mathfrak{T}_n$ for the quantity on the left-hand side of inequality~\eqref{eqn:epscond}. From~$\frac{\log(d^2n_V)}{n_V}\leq {\lambda_{2,V}'}^{-1}{\eps_V'}$ for every~$V \in \{X, Y\}$, it follows that
\begin{equation*}
\mathfrak{T}_n
\leq
\max_{V \in \{X, Y\}} 2\eps_V'+{\lambda_{2, V}'}^{-1}{\eps_V'}+\sqrt{{\lambda_{2, V}'}^{-2}{\eps_V'}^2+4{\lambda_{2, V}'}^{-1}{\eps_V'}^2}
\leq
K \max_{V \in \{X, Y\}} \eps_V',	
\end{equation*}
for a constant~$K$ depending only on~$\lambda_{2, X}'$ and~$\lambda_{2, Y}'$. Consider, for~$\min(b^X_1, b^Y_1) =: b_1^{\Delta}$, the condition
\begin{equation}\label{eq:CaseNormalize}
K \max_{V \in \{X, Y\}} \eps_V' <1 \qquad\text{and}\qquad C_1  \mathfrak{d}^{\Delta}  \leq \frac{(b_1^{\Delta})^2}{2},
\end{equation}
where~$C_1$ is the constant~``$C$'' from Proposition~\ref{prop:covestimGRAM2s} (and~$\mathfrak{d}^{\Delta}$ was defined in~\eqref{eqn:ddefts}), depending only on~$b_2^V,\lambda_{1,V}',\lambda_{2,V}'$, and~$m_V$ for~$V \in \{X, Y\}$. Note that if one of the conditions in~\eqref{eq:CaseNormalize} would be violated, we could easily conclude~\eqref{eq:HDGaussstudentizedts} by a suitable choice of the constant (by an argument as in the proof of Theorem~\ref{thm:mainthB}, Cases~2 and~3). Hence, we shall assume that~\eqref{eq:CaseNormalize} holds. Note that~$K \max_{V \in \{X, Y\}} \eps_V' <1$ implies~\eqref{eqn:epscond} by the penultimate display, so that the inequality in~\eqref{eq:covestimGramts} of Proposition~\ref{prop:covestimGRAM2s} holds. 

Denoting~$D_{\Delta} := \mathrm{diag}(\sigma_{2,1}^{\Delta}, \hdots, \sigma_{2,d}^{\Delta})$ and~$\tilde{D}_{\Delta} := \mathrm{diag}(\tilde \sigma_{1}^{\Delta}, \hdots, \tilde \sigma_{d}^{\Delta})$, we obtain from~\eqref{eqn:sdagdeltN} that (grant none of the variance estimators vanishes)~$$S^{\Delta}_{W,S} = \tilde D_{\Delta}^{-1} S^{\dagger, \Delta}_{W, S}.$$ Define~$	T_{\Delta} := D_{\Delta}^{-1}  S^{\dagger, \Delta}_{W, S}$ and observe that (grant none of the variance estimators vanishes)
\begin{equation}
A^{\Delta}
:=
\enVert[1]{S^{\Delta}_{W,S}-T_{\Delta}}_\infty
\leq 
\max_{j=1,\hdots,d}\envert[3]{\frac{1}{\tilde{\sigma}^{\Delta}_{j}}-\frac{1}{\sigma_{j}^{\Delta}}}
\cdot \| S^{\dagger, \Delta}_{W,S} \|_{\infty}\label{eq:An1}.
\end{equation}
By~\eqref{eq:covestimGramts} of Proposition~\ref{prop:covestimGRAM2s}, using~$\min_{j=1,\hdots,d}\sigma^{\Delta}_{2,j}\geq \min(b^X_1, b^Y_1) = b_1^{\Delta}$ and~\eqref{eq:CaseNormalize}, there exists a constant~$C_2$ depending only on $b_1^V,b_2^V,\lambda_{1,V}',\lambda_{2,V}'$, and~$m_V$, for~$V \in \{X, Y\}$, such that on a set of probability at least~$1-24 \times  \frac{n_Y + n_X}{n_Xn_Y}$ it holds that
\begin{equation}
\min_{j=1,\hdots,d} \tilde \sigma^{\Delta}_{j} \geq b^{\Delta}/\sqrt{2} > 0 ~ \text{ and } ~ \max_{j=1,\hdots,d}\envert[3]{\frac{1}{\tilde{\sigma}^{\Delta}_{j}}-\frac{1}{\sigma^{\Delta}_{2,j}}}
=
\max_{j=1,\hdots,d}\frac{|\tilde{\sigma}^{\Delta}_{j}-\sigma^{\Delta}_{2,j}|}{\tilde{\sigma}^{\Delta}_{j}\sigma^{\Delta}_{2,j}}
\leq 
C_2 \mathfrak{d}^{\Delta}.
\end{equation}
Furthermore, as~$\sigma^{\Delta}_{2,j}\leq \max(\sigma^X_{m_X,j}, \sigma^Y_{m_Y,j}) \leq \max(b_2^X, b_2^Y) =: b_2$, the union bound and~$\P\del[0]{|\bm{z}|> t}\leq 2\exp(-t^2/2)$ for~$t\geq 0$ and~$\bm{z} \sim \mathsf{N}_1(0,1)$ yields for~$Z \sim \mathsf{N}_d(0, \Sigma^{\Delta})$ that
\begin{equation*}
\P\del{\max_{j=1,\hdots,d}|Z_j|> b_2\sqrt{2\log(2d\min(n_X, n_Y))}}
\leq
\frac{1}{\min(n_X, n_Y)}.
\end{equation*}
Thus, by Theorem~\ref{thm:HDGaussts}, there exists a constant~$C^{\Delta}$, depending only on $b_1^V,b_2^V,\lambda_{1,V},\lambda_{2,V}$, and~$m_V$, for~$V \in \{X, Y\}$, such that
\begin{align*}
&\P\del[3]{\| S^{\dagger, \Delta}_{W,S} \|_{\infty}> b_2\sqrt{2\log(2d\min(n_X, n_Y))}} \\
&\leq 
C^{\Delta} \times \left(\mathfrak{A}^X_{n_X} + \mathfrak{A}^Y_{n_Y} \right)+\frac{1}{\min(n_X, n_Y)}
\leq
C_*^{\Delta} \times \left(\mathfrak{A}^X_{n_X} + \mathfrak{A}^Y_{n_Y} \right) \label{eq:An3}, 
\end{align*}
where the value of~$C^{\Delta}$ was suitably adjusted to justify the second inequality. Combining these observations, we conclude that there exists a constants~$C_3$ and~$K_1$, depending only on $b_1^V,b_2^V,\lambda_{1,V},\lambda_{2,V},\lambda_{1,V}',\lambda_{2,V}'$, and~$m_V$, for~$V \in \{X, Y\}$, such that
\begin{align*}
\P\del[3]{A^{\Delta}\leq C_3\sqrt{\log(d\min(n_X, n_Y))} \mathfrak{d}^{\Delta} }
&\geq 
1-24 \times  \frac{n_Y + n_X}{n_Xn_Y}-K_1 \left(\mathfrak{A}^X_{n_X} + \mathfrak{A}^Y_{n_Y} \right) \\
&\geq
1-K \left(\mathfrak{A}^X_{n_X} + \mathfrak{A}^Y_{n_Y} \right),
\end{align*}
where~$K$ depends only on~$b_1^V,b_2^V,\lambda_{1,V},\lambda_{2,V},\lambda_{1,V}',\lambda_{2,V}'$, and~$m_V$, for~$V \in \{X, Y\}$

To conclude~\eqref{eq:HDGaussstudentizedts}, we set up for an application of Lemma A.4 in \cite{kprobfree}, and use the following notation: for any non-empty set~$B\subseteq \R^d$ and~$\zeta >0$, let $$B^{\zeta,\infty}:=\cbr[1]{x\in\R^d:\inf_{y\in B}\|x-y\|_\infty\leq \zeta}.$$ Furthermore, $$B^{-\zeta,\infty}:=\cbr[1]{x\in\R^d:\mc{B}_\infty(x,\zeta) \subseteq B} \quad \text{ where } \quad \mc{B}_{\infty}(x,\zeta):=\cbr[1]{y\in\R^d:\|y-x\|_{\infty} \leq \zeta}.$$
Set~$\overline{A}_{\Delta} := C_3\sqrt{\log(d\min(n_X, n_Y))}\mathfrak{d}^{\Delta}$, fix~$H \in \mathcal{H}$, and note that as an immediate consequence of Theorem~\ref{thm:HDGaussts}, we obtain, for~$Z'\sim\mathsf{N}_d(0,\Sigma_0^{\Delta})$ and~$v \in \{-1, 1\}$, that
\begin{align*}
&\envert[2]{\P\del[1]{T_{\Delta}\in H^{v\overline{A}_{\Delta},\infty}}-\P\del[1]{Z'\in H^{v\overline{A}_{\Delta},\infty}}} \\
&=\envert[2]{\P\del[1]{  S^{\dagger, \Delta}_{W, S} \in D_{\Delta} H^{v\overline{A}_{\Delta},\infty}}-\P\del[1]{D_{\Delta} Z'\in D_{\Delta}H ^{v\overline{A}_{\Delta},\infty}}}   \leq \rho^{\Delta}_W \leq C^{\Delta} \times \left(\mathfrak{A}^X_{n_X} + \mathfrak{A}^Y_{n_Y} \right).
\end{align*}
We now use Lemma A.4 in \cite{kprobfree} with ~$H$,~$U = T_{\Delta}$, $V = S^{\Delta}_{W, S}$, ~$\Xi = \Sigma_{0}^{\Delta}$,~$\xi = 1$,~$a = C^{\Delta} \times \left(\mathfrak{A}^X_{n_X} + \mathfrak{A}^Y_{n_Y} \right)$, $b = \overline{A}_{\Delta}$ and~$c = K \left(\mathfrak{A}^X_{n_X} + \mathfrak{A}^Y_{n_Y} \right)$ to conclude 
\begin{equation*}
\left|
\P(S^{\Delta}_{W, S} \in H) - \P(Z' \in H)
\right|
\leq 
(C^{\Delta} + K) \times \left(\mathfrak{A}^X_{n_X} + \mathfrak{A}^Y_{n_Y} \right) + (\sqrt{2 \log(d)} + 4) \overline{A}_{\Delta},
\end{equation*}
which establishes the required bound.

To establish the final convergence statement, recall, in particular that~$\mathfrak{B}^V_{n_V} := \sqrt{\log(d)\log(dn_V)} \times \mathfrak{d}^V_{n_V}$ and
\begin{equation*}
\mathfrak{d}^{\Delta} 
= 
\begin{cases}
w^2_Y\mathfrak{d}_{n_X}^X + w^2_X \mathfrak{d}_{n_Y}^Y, & \text{ if the non-pooled cov.~est. is used}, \\
w^2_X\mathfrak{d}_{n_X}^X + w^2_Y \mathfrak{d}_{n_Y}^Y, & \text{ if~$\Sigma^X = \Sigma^Y$ and the pooled cov.~est. is used,}
\end{cases}
\end{equation*}
which establishes
\begin{equation*}
	\mathfrak{B}^{\Delta} = \sqrt{\log(d)\log(d\min(n_X, n_Y))} \times \mathfrak{d}^{\Delta} \leq \max( \mathfrak{B}^X_{n_X}, \mathfrak{B}^Y_{n_Y}).
\end{equation*}
Now, the convergence statement follows from Assumptions~\ref{ass:setting} and~\ref{ass:settingY}, and the (one-sample) convergence statement in Theorem~\ref{thm:HDGauss_studentized} (applied to the sample of~``$Y$'' \emph{and} the sample of~``$X$''),
\end{proof}

\begin{proposition}\label{prop:BSAlwaysvalidts}
Let Assumptions~\ref{ass:setting} and~\ref{ass:settingY} be satisfied. There exist constants~$C$ and~$C^*$, depending only on~$b^V_1,b^V_2,\lambda_{1,V},\lambda_{2,V},\lambda_{1,V}',\lambda_{2,V}'$ and~$m_V$ for~$V \in \{X, Y\}$, such that under the condition
\begin{equation}\label{eq:epscond'ts}
\mathfrak{d}^{\Delta}  \leq \frac{(b_1^{\Delta})^2}{2C}, \quad \text{ and } \quad	\eqref{eqn:epscond},
\end{equation}
for~$\mathfrak{s}^{\Delta}:=C^*\log(d)\sqrt{\mathfrak{d}^{\Delta}}$ with probability at least~$1-24 \times  \frac{n_Y + n_X}{n_Xn_Y}$: $\min_{j = 1, \hdots, d} \tilde{\sigma}^{\Delta}_{j} > 0$ and
\begin{equation}\label{eq:critvalcontrolts}
c_{(\beta-\mathfrak{s}^{\Delta}_n)\vee 0, A}(\Sigma_{0}^{\Delta})\leq c_{\beta,  A}(\tilde{\Sigma}_{0}^{\Delta}) \leq c_{(\beta+\mathfrak{s}^{\Delta}_n) \wedge 1, A}(\Sigma_{0}^{\Delta}) \text{ for every } \emptyset \neq A \subseteq [d] \text{ and } \beta \in [0, 1]
\end{equation}
\end{proposition}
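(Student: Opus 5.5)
The plan mirrors the proof of Proposition~\ref{prop:BSAlwaysvalid}, with the one-sample covariance concentration replaced by Proposition~\ref{prop:covestimGRAM2s}. By condition~\eqref{eqn:epscond}, which is part of~\eqref{eq:epscond'ts}, Proposition~\ref{prop:covestimGRAM2s} gives an event $G$ with $\P(G)\geq 1-24(n_X+n_Y)/(n_Xn_Y)$ on which
\begin{equation*}
\max_{j,k}|\tilde{\Sigma}^{\Delta}_{j,k}-\Sigma^{\Delta}_{j,k}|\leq C_1\mathfrak{d}^{\Delta}.
\end{equation*}
The only randomness used below is this event, so Assumption~\ref{ass:indep} is not needed.

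On $G$ I will first control the diagonal. Since $\min_j\sigma^{\Delta}_{2,j}\geq b_1^{\Delta}=\min(b_1^X,b_1^Y)$, we have $(\sigma^{\Delta}_{2,j})^2\geq (b_1^{\Delta})^2$. Choosing $C\geq C_1$ in~\eqref{eq:epscond'ts} gives $C_1\mathfrak{d}^{\Delta}\leq (b_1^{\Delta})^2/2$, hence $\min_j(\tilde{\sigma}^{\Delta}_j)^2\geq (b_1^{\Delta})^2/2>0$. In particular all estimated variances are positive, so $\tilde{\Sigma}^{\Delta}_0=\tilde D_{\Delta}^{-1}\tilde{\Sigma}^{\Delta}\tilde D_{\Delta}^{-1}$. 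The elementary bound
\begin{equation*}
|\tilde{\Sigma}^{\Delta}_{0,j,k}-\Sigma^{\Delta}_{0,j,k}|\leq \frac{|\tilde{\Sigma}^{\Delta}_{j,k}-\Sigma^{\Delta}_{j,k}|}{\tilde\sigma^{\Delta}_j\tilde\sigma^{\Delta}_k}+|\Sigma^{\Delta}_{j,k}|\left|\frac{1}{\tilde\sigma^{\Delta}_j\tilde\sigma^{\Delta}_k}-\frac{1}{\sigma^{\Delta}_{2,j}\sigma^{\Delta}_{2,k}}\right|
\end{equation*}
then holds. Together with $|\Sigma^{\Delta}_{j,k}|\leq b_2^2$ for $b_2=\max(b_2^X,b_2^Y)$ and $|\sqrt{x}-\sqrt{y}|\leq|x-y|/\sqrt{y}$, it yields $\max_{j,k}|\tilde{\Sigma}^{\Delta}_{0,j,k}-\Sigma^{\Delta}_{0,j,k}|\leq C_2\mathfrak{d}^{\Delta}$ on $G$, with $C_2$ depending only on the allowed constants.

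Next I will apply the Gaussian comparison inequality over hyperrectangles for two centred Gaussians with unit-variance covariance matrices. This is the same tool (Chernozhukov et al.\ type, with bound of order $\log(d)\,\Delta_\infty^{1/2}$) used in Lemma~C.2 of \cite{kprobfree} and Theorem~3.1 of \cite{kphdapprox}. Applied conditionally on the data on $G$, it gives
\begin{equation*}
\sup_{H\in\mc{H}}\left|\P(\Sigma_0^{\Delta,1/2}Z\in H)-\P(\tilde{\Sigma}_0^{\Delta,1/2}Z\in H\mid \text{data})\right|\leq C^*\log(d)\sqrt{\mathfrak{d}^{\Delta}}=\mathfrak{s}^{\Delta}.
\end{equation*}
I take this comparison inequality as known, exactly as in the proof of Proposition~\ref{prop:BSAlwaysvalid}.

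Finally, for each nonempty $A$ the event $\{\max_{j\in A}|z_j|\leq c\}$ is a hyperrectangle. The quantile sandwich then follows verbatim from the last display of the proof of Proposition~\ref{prop:BSAlwaysvalid}. On one side, $\P(\max_{A}|(\tilde\Sigma_0^{\Delta,1/2}Z)_j|\leq c_{(\beta+\mathfrak{s}^{\Delta})\wedge1,A}(\Sigma_0^{\Delta})\mid\text{data})\geq\beta$, where the edge cases $\beta+\mathfrak{s}^{\Delta}\geq1$ are trivial because the quantile is then $\infty$. On the other side, the analogous probability at $c_{(\beta-\mathfrak{s}^{\Delta})\vee0,A}(\Sigma^{\Delta}_0)$ is at most $\beta$, using continuity of the Gaussian max cdf. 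Inverting both via~\eqref{eqn:bootqdef} gives~\eqref{eq:critvalcontrolts}. The main obstacle I expect is the correlation-matrix perturbation step, specifically keeping the constants dependent only on the listed quantities. This is handled by the lower bound $b_1^{\Delta}$ on the variances, which is exactly what the first condition in~\eqref{eq:epscond'ts} secures.
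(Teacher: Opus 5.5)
Your proposal is correct and takes essentially the paper's route. The paper also gets the event from Proposition~\ref{prop:covestimGRAM2s}, then defers to the argument of Lemma~C.2 in \cite{kprobfree} for positivity of $\tilde{\sigma}^{\Delta}_j$ and the conditional Gaussian comparison bound $\mathfrak{s}^{\Delta}$ (the steps you make explicit via the lower bound $b_1^{\Delta}$, the correlation-matrix perturbation and the $\log(d)\sqrt{\mathfrak{d}^{\Delta}}$ comparison inequality), and finally inverts the quantiles exactly as you do, with no use of Assumption~\ref{ass:indep}.
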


The proof proceeds similarly as the proof of Proposition~\ref{prop:BSAlwaysvalid}.

\begin{proof}[Proof of Proposition~\ref{prop:BSAlwaysvalidts}:] Argue as in the proof of Lemma C.2 in Appendix C of \cite{kprobfree}, but now based on our Proposition~\ref{prop:covestimGRAM2s} instead of Theorem 3.1 in \cite{kphdapprox}, to obtain that for~$Z \sim \mathsf{N}_d(0, I_d)$ (independent of~ $\tilde{X}_1,\hdots,\tilde{X}_{n_X}$, and  $\tilde{Y}_1,\hdots,\tilde{Y}_{n_Y}$) and on an event~$G$, say, with probability greater than or equal to~$1-24 \times  \frac{n_Y + n_X}{n_Xn_Y}$, it holds that~$\min_{j =1, \hdots, d} \tilde{\sigma}_j^{\Delta} > 0$ and
\begin{align*}
\sup_{H\in\mc{H}}\envert[2]{\P\del[1]{\Sigma_{0}^{\Delta, 1/2}Z\in H}-\P\del[1]{\tilde{\Sigma}_{0}^{\Delta, 1/2}Z\in H\mid\tilde{X}_1,\hdots,\tilde{X}_{n_X}, \tilde{Y}_1,\hdots,\tilde{Y}_{n_Y}}}
\leq \mathfrak{s}^{\Delta}.
\end{align*} 
On~$G$, for every~$\emptyset \neq A \subseteq [d]$ and~$\beta \in [0, 1]$, it holds that
\begin{equation*}
\begin{aligned}
&\P\del[2]{\max_{j \in A} |(\tilde{\Sigma}_{0}^{\Delta, 1/2}Z)_j|\leq c_{(\beta+\mathfrak{s}^{\Delta}_n) \wedge 1, A}(\Sigma_{0}^{\Delta})\mid\tilde{X}_1,\hdots,\tilde{X}_{n_X}, \tilde{Y}_1,\hdots,\tilde{Y}_{n_Y}} \\ & \geq  \beta \geq \P\del[2]{\max_{j \in A} |(\tilde{\Sigma}_{0}^{\Delta, 1/2}Z)_j|\leq c_{(\beta-\mathfrak{s}_n)\vee 0, A}(\Sigma_{0}^{\Delta})\mid\tilde{X}_1,\hdots,\tilde{X}_{n_X}, \tilde{Y}_1,\hdots,\tilde{Y}_{n_Y}},
\end{aligned}
\end{equation*}
from which $c_{(\beta-\mathfrak{s}^{\Delta}_n)\vee 0, A}(\Sigma^{\Delta}_{0}) \leq c_{\beta,  A}(\tilde{\Sigma}^{\Delta}_{0}) \leq c_{(\beta+\mathfrak{s}^{\Delta}_n) \wedge 1, A}(\Sigma_{0}^{\Delta})$ follows by definition of~$c_{\beta,  A}(\tilde{\Sigma}^{\Delta}_{0})$ given prior to Equation~\eqref{eq:MonCVBts}.
\end{proof}

\begin{proof}[Proof of Theorem~\ref{thm:mainthGtsgl}:]
We can combine Lemma~\ref{lem:coincts} (recall that~$J(\Delta) = [d]$ in the present context) with Theorem~\ref{thm:HDGauss_studentizedts} to obtain for~$Z'\sim\mathsf{N}_d(0,\Sigma_0^{\Delta})$ (cf.~also Remark~\ref{rem:div0}; an analogous statement applies in the present context)
\begin{equation}\label{eqn:intermedgh}
\sup_{H\in\mc{H}}\envert[2]{\P\del[1]{S^{\Delta}_{W}\in H}-\P\del[1]{Z'\in H}}
\leq
C_S^{\Delta} \times \left(\mathfrak{A}^X_{n_X} + \mathfrak{A}^Y_{n_Y}+\mathfrak{B}^{\Delta}\right),
\end{equation}
The upper bound in~\eqref{eqn:fwerGtsG} now immediately follows from the Khatri-{\v{S}}id{\'a}k inequality.

To establish~\eqref{eqn:fwerGtsB} we shall only consider the case where condition in~\eqref{eq:epscond'ts} is satisfied. If this condition does not hold, one can easily increase the constant~$C'$ (without changing its claimed dependence properties) so that the upper bound~\eqref{eqn:fwerGtsB} trivially holds [cf.~Cases 2 and 3 in the proof of Theorem~\ref{thm:mainthBts} for details, which we do not repeat here.] Provided~\eqref{eq:epscond'ts} is satisfied, by Proposition~\ref{prop:BSAlwaysvalidts}, with probability at least~$1-24 \times  \frac{n_Y + n_X}{n_Xn_Y}$, $\min_{j = 1, \hdots, d} \tilde{\sigma}^{\Delta}_{j} > 0$ and (for~$\mathfrak{s}^{\Delta}:=C^*\log(d)\sqrt{\mathfrak{d}^{\Delta}}$)
\begin{equation}\label{eqn:critvalreltsgh}
c_{(1-\alpha-\mathfrak{s}^{\Delta})\vee 0, [d]}(\Sigma^{\Delta}_{0})\leq c_{1-\alpha,  [d]}(\tilde{\Sigma}^{\Delta}_{0}) \leq c_{(1-\alpha +\mathfrak{s}^{\Delta}) \wedge 1, [d]}(\Sigma^{\Delta}_{0}).
\end{equation}
Together with~\eqref{eqn:intermedgh} it follows that
$$\P(\|S^{\Delta}_{W}\|_{\infty} > c_{1-\alpha, [d]}(\tilde{\Sigma}^{\Delta}_0) ) \leq \alpha+ \mathfrak{s}^{\Delta} + 24 \times  \frac{n_Y + n_X}{n_Xn_Y} + C_S^{\Delta} \times \left(\mathfrak{A}^X_{n_X} + \mathfrak{A}^Y_{n_Y}+\mathfrak{B}^{\Delta}\right).$$ Increasing the constant (without changing its dependence properties; cf.~also the proof of Theorem~\ref{thm:HDGauss_studentizedts}), we can omit explicitly stating~$24 \times  \frac{n_Y + n_X}{n_Xn_Y}$ in the upper bound just obtained, so that, recalling the definition of~$\mathfrak{C}^{\Delta}$, the statement in~\eqref{eqn:fwerGtsB} follows.

To establish~\eqref{eqn:fwerGtsB2}, we first note that 
\begin{equation*}
\P\left(\|S^{\Delta}_{W}\|_{\infty} > \hat{c}_{1-\alpha, [d]}(\tilde{\Sigma}^{\Delta}_0) \right) \leq \P\left(\|S^{\Delta}_{W}\|_{\infty} > c_{1-\alpha, [d]}(\tilde{\Sigma}^{\Delta}_0) \right) + \P\left(\hat{c}_{1-\alpha, [d]}(\tilde{\Sigma}^{\Delta}_0) < c_{1-\alpha, [d]}(\tilde{\Sigma}^{\Delta}_0)\right).
\end{equation*}
The first summand in the upper bound is bounded via~\eqref{eqn:fwerGtsB}. It hence suffices to recall that~$$\hat{c}_{1-\alpha, [d]}(\tilde{\Sigma}^{\Delta}_0) := \text{the } (Q_{B, 1-\alpha}(1-1/B) + 1)\text{th order statistic from } \quad \|z_1\|_{\infty}, \hdots, \|z_B\|_{\infty},$$ where~$z_1, \hdots, z_B$ are independently drawn from~$\mathsf{N}_d(0, \tilde{\Sigma}^{\Delta}_0)$ (conditionally on the data), so that Lemma~\ref{lem:auxB} (applied conditionally on the data, with~$z_i = \|z_i\|_{\infty}$,~$\beta = 1-\alpha$,~$\gamma = 1-B^{-1}$),  delivers~$$\P\left(\hat{c}_{1-\alpha, [d]}(\tilde{\Sigma}^{\Delta}_0) \geq c_{1-\alpha, [d]}(\tilde{\Sigma}^{\Delta}_0) \mid \tilde{X}_1, \hdots, \tilde{X}_{n_X}, \tilde{Y}_1, \hdots, \tilde{Y}_{n_Y}\right) \geq 1-B^{-1}.$$

The remaining statement is an immediate consequence of the last statement in Theorem~\ref{thm:HDGauss_studentizedts} together with Lemma B.4 of \cite{kphdapprox}.
\end{proof}

\begin{proof}[Proof of Theorem~\ref{thm:mainthGts}:]
We adapt the argument that was used to establish Theorem~\ref{thm:mainthG}. First of all, without loss of generality, we assume that~$\max(\eps_V, \eps_V': V \in \{X, Y\}) < 1/2$ and~$J(\Delta) \neq \emptyset$. Under these assumptions,~$J(\Delta) \not \subseteq \hat{\Pi}^{\Delta}_G $ implies the existence of an index~$\hat{j} = j(\tilde{X}_1,\hdots, \tilde{X}_{n_X}, \tilde{Y}_1,\hdots, \tilde{Y}_{n_Y})\in J(\Delta)$ and a step~$\hat{k}=k(\tilde{X}_1,\hdots, \tilde{X}_{n_X}, \tilde{Y}_1,\hdots, \tilde{Y}_{n_Y})$, say, in the while loop of Algorithm~\ref{algo:gaussts}, at which (using the monotonicity relationship~\eqref{eq:MonCVG})
\begin{equation*}
\max_{j\in J(\Delta)}|S_{W,j}^{\Delta}| \geq	|S_{W,\hat{j}}^{\Delta}|>c_{1-\alpha,\hat{\Pi}_{\hat{k}}} \geq c_{1-\alpha,J(\Delta)} \quad \text{ and } \quad J(\Delta)\subseteq \hat{\Pi}_{\hat{k}}.
\end{equation*}
We can hence conclude (recalling notation introduced in~\eqref{eqn:genhypproof} and~\eqref{eqn:genhypproofab}) that
\begin{equation*}
\P\left(J(\Delta) \not \subseteq \hat{\Pi}_G^{\Delta} \right)
\leq
\P \left(\max_{j\in J(\Delta)}|S_{W,j}^{\Delta}|>c_{1-\alpha,J(\Delta)}\right) = 1 - \P \left( S_{W}^{\Delta} \in H(c_{1-\alpha,J(\Delta)}, \Delta)\right).
\end{equation*}
By Lemma~\ref{lem:coincts}, it holds that~$S_{W, j}^{\Delta} = S_{W, S, j}^{\Delta}$ if~$\tilde{\sigma}^{\Delta}_{j} \neq 0$ and~$j \in J(\Delta)$. Therefore,
\begin{equation*}
\P \left(  S_{W}^{\Delta} \in H(c_{1-\alpha,J(\Delta)}, \Delta) \right) \geq \P \left( S^{\Delta}_{W, S} \in H(c_{1-\alpha,J(\Delta)}, \Delta), \min_{j = 1, \hdots, d} \tilde{\sigma}^{\Delta}_{j} > 0\right)
\end{equation*}
Thus, it follows from Theorem~\ref{thm:HDGauss_studentizedts}, cf.~also Remark~\ref{rem:div0} (an analogous statement applies here), that for~$Z\sim\mathsf{N}_d(0,I_d)$
\begin{align*}
&\P \left( S_{W}^{\Delta} \in H(c_{1-\alpha,J(\Delta)}, \Delta)\right) \\ & \geq \P(\Sigma_0^{\Delta, 1/2}Z \in H(c_{1-\alpha,J(\Delta)}, \Delta)) - C_S^{\Delta} \times \left(\mathfrak{A}^X_{n_X} + \mathfrak{A}^Y_{n_Y}+\mathfrak{B}^{\Delta}\right) \\
& = \P(\max_{j \in J(\Delta)} |(\Sigma_0^{\Delta, 1/2} Z)_j|  \leq c_{1-\alpha,J(\Delta)}) - C_S^{\Delta} \times \left(\mathfrak{A}^X_{n_X} + \mathfrak{A}^Y_{n_Y}+\mathfrak{B}^{\Delta}\right) \\
& \geq (1-\alpha) - C_S^{\Delta} \times \left(\mathfrak{A}^X_{n_X} + \mathfrak{A}^Y_{n_Y}+\mathfrak{B}^{\Delta}\right),
\end{align*}
where the second inequality follows from the Khatri-{\v{S}}id{\'a}k inequality (more specifically, the argument underlying the first inequality in~\eqref{eqn:ksiquant}). The remaining (convergence) statement follows from the last statement in Theorem~\ref{thm:HDGauss_studentizedts}.
\end{proof}
\begin{proof}[Proof of Theorem~\ref{thm:mainthBts}:]
We adapt the argument used to establish Theorem~\ref{thm:mainthB}. As in the proof of Theorem~\ref{thm:mainthGts}, we assume, without loss of generality, that~$\max(\eps_V, \eps_V': V \in \{X, Y\}) < 1/2$ and~$J(\Delta) \neq \emptyset$. We then argue similarly as in the proof of Theorem~\ref{thm:mainthGts}, but now based on the monotonicity relation in~\eqref{eq:MonCVBts}, that (cf.~Equations~\eqref{eqn:genhypproof} and~\eqref{eqn:genhypproofab})
\begin{equation}
\begin{aligned}\label{eqn:redstepts}
	\P\left(J(\Delta) \not \subseteq \hat{\Pi}^{\Delta}_B\right)
	\leq  1 - \P \left( S^{\Delta}_{W} \in H(c_{1-\alpha,J({\Delta})}(\tilde{\Sigma}^{\Delta}), \Delta)\right),
\end{aligned}
\end{equation}
The generalized hyperrectangle~$H(c_{1-\alpha,J({\Delta})}(\tilde{\Sigma}^{\Delta}), \Delta)$ is data-\emph{dependent}. Hence, in contrast to the argument used to establish Theorem~\ref{thm:mainthGts}, we also need to relate the bootstrap critical values to data-independent critical values via Proposition~\ref{prop:BSAlwaysvalidts} as follows: 

\textbf{Case 1: condition in~\eqref{eq:epscond'ts} is satisfied.} By Proposition~\ref{prop:BSAlwaysvalidts}, with probability at least~$1-24 \times  \frac{n_Y + n_X}{n_Xn_Y}$, $\min_{j = 1, \hdots, d} \tilde{\sigma}^{\Delta}_{j} > 0$ and
\begin{equation}\label{eqn:critvalrelts}
c_{(1-\alpha-\mathfrak{s}^{\Delta})\vee 0, J(\Delta)}(\Sigma^{\Delta}_{0})\leq c_{1-\alpha,  J(\Delta)}(\tilde{\Sigma}^{\Delta}_{0}) \leq c_{(1-\alpha +\mathfrak{s}^{\Delta}) \wedge 1, J(\Delta)}(\Sigma^{\Delta}_{0}),
\end{equation}
and hence~$$H(c_{(1-\alpha-\mathfrak{s}_n^{\Delta})\vee 0, J(\Delta)}(\Sigma^{\Delta}_{0}), \Delta) \subseteq H(c_{1-\alpha,  J(\Delta)}(\tilde{\Sigma}^{\Delta}_{0}), \Delta).$$ By Lemma~\ref{lem:coincts}, it holds that~$S_{W, j}^{\Delta} = S_{W, S, j}^{\Delta}$ if~$\tilde{\sigma}^{\Delta}_{j} \neq 0$ and~$j \in J(\Delta)$. Therefore,
\begin{align*}
&\P \left( S^{\Delta}_{W} \in H(c_{1-\alpha,J({\Delta})}(\tilde{\Sigma}^{\Delta}), \Delta)\right) \\ 
&\geq  \P \left( S^{\Delta}_{W, S} \in H(c_{(1-\alpha-\mathfrak{s}_n^{\Delta})\vee 0, J(\Delta)}(\Sigma^{\Delta}_{0}), \Delta), \min_{j = 1, \hdots, d} \tilde{\sigma}^{\Delta}_{j} > 0, \eqref{eqn:critvalrelts}\right), \\
& \geq  \P \left( S^{\Delta}_{W, S} \in H(c_{(1-\alpha-\mathfrak{s}_n^{\Delta})\vee 0, J(\Delta)}(\Sigma^{\Delta}_{0}), \Delta), \min_{j = 1, \hdots, d} \tilde{\sigma}^{\Delta}_{j} > 0\right) - 24 \times  \frac{n_Y + n_X}{n_Xn_Y};
\end{align*}
(where we used~$\P(A, B) = \P(A) + \P(B) - \P(A\cup B) \geq \P(A) + \P(B) - 1$ with $B = \{\eqref{eqn:critvalrelts}\}$ to obtain the last inequality). Thus, it follows from Theorem~\ref{thm:HDGauss_studentizedts}, that for~$Z\sim\mathsf{N}_d(0,I_d)$
\begin{align*}
& \P \left( S^{\Delta}_{W} \in H(c_{1-\alpha,J({\Delta})}(\tilde{\Sigma}^{\Delta}), \Delta)\right) \\ & \geq \P \left( \Sigma_0^{\Delta, 1/2} Z \in H(c_{(1-\alpha-\mathfrak{s}_n^{\Delta})\vee 0, J(\Delta)}(\Sigma^{\Delta}_{0}), \Delta)\right) \\ & \hspace{3.7cm} - 24 \times  \frac{n_Y + n_X}{n_Xn_Y} - C_S^{\Delta} \times \left(\mathfrak{A}^X_{n_X} + \mathfrak{A}^Y_{n_Y}+\mathfrak{B}^{\Delta}\right) \\
& = \P(\max_{j \in J(\Delta)} |(\Sigma_0^{\Delta, 1/2} Z)_j|  \leq c_{(1-\alpha-\mathfrak{s}_n^{\Delta})\vee 0, J(\Delta)}(\Sigma^{\Delta}_{0})) \\ & \hspace{3.7cm} - 24 \times  \frac{n_Y + n_X}{n_Xn_Y} - C_S^{\Delta} \times \left(\mathfrak{A}^X_{n_X} + \mathfrak{A}^Y_{n_Y}+\mathfrak{B}^{\Delta}\right) \\
& \geq (1-\alpha -\mathfrak{s}_n^{\Delta}) - C \times \left(\mathfrak{A}^X_{n_X} + \mathfrak{A}^Y_{n_Y}+\mathfrak{B}^{\Delta}\right),
\end{align*}
where we updated the constant (depending only on~$b^V_1,b^V_2,\lambda_{1, V},\lambda_{2, V},\lambda_{1, V}',\lambda_{2, V}'$ and~$m_V$ for~$V \in \{X, Y\}$) to obtain the last inequality. The required statement hence follows, recalling the definition of~$\mathfrak{C}^{\Delta}$.

\textbf{Case 2:~$\mathfrak{d}^{\Delta} > \frac{(b_1^{\Delta})^2}{2C}$.}
In this case, the upper bound in Theorem~\ref{thm:mainthBts} exceeds~$1$ for every constant exceeding~$\frac{2C}{(b_1^{\Delta})^2}$, so that updating the constant (if necessary) takes care of this case. 

\textbf{Case 3:~$\max_{V \in \{X, Y\}} \left[2\eps'_V +\frac{\log(d^2n_V)}{n_V}+\sqrt{\del[2]{\frac{\log(d^2n_V)}{n_V}}^2+4\frac{\log(d^2n_V)}{n_V}\eps_V'} \right] \geq1$.} It follows from~$\frac{\log(d^2n_V)}{n_V}\leq {\lambda_{2,V}'}^{-1}{\eps_V'}$ that, for a constant~$C_2$ depending only on~$\lambda_{2,V}'$, 
\begin{align*}
&2\eps'_V +\frac{\log(d^2n_V)}{n_V}+\sqrt{\del[2]{\frac{\log(d^2n_V)}{n_V}}^2+4\frac{\log(d^2n_V)}{n_V}\eps_V'} \\ \leq &	2\eps_V'+{\lambda_{2,V}'}^{-1}{\eps_V'}+\sqrt{{\lambda_{2,V}'}^{-2}{\eps_V'}^2+4{\lambda_{2,V}'}^{-1}{\eps_V'}^2}
\leq
C_2\eps_V'.	
\end{align*}
If~$C_2\eps_V'\geq 1$ then 
\begin{align*}
\overline{\eta}_V\geq \frac{1}{2C_2\lambda_{1,V}'}\qquad\text{or}\qquad \frac{\log(dn_V)}{n_V}
=
\frac{\log([dn_V]^2)/2}{n_V}
\geq
\frac{\log(d^2n_V)}{2n_V}
\geq
\frac{1}{4C_2\lambda_{2,V}'}.
\end{align*} 
In either case, the upper bound in~Theorem~\ref{thm:mainthBts} remains valid, also in this case, by suitably adjusting~$C$ (if needed). Hence, we also established the statement in case the condition in~\eqref{eq:epscond'ts} is violated.

The remaining (convergence) statement follows from the last statement in Theorem~\ref{thm:HDGauss_studentizedts}.
\end{proof}

\begin{proof}[Proof of Theorem~\ref{thm:mainthBts2}:]
We adapt (mostly notationally) the argument used in the proof of Theorem~\ref{thm:mainthB2}. The present proof combines Theorem~\ref{thm:mainthBts} with Lemma~\ref{lem:auxB} (we only need to consider Case 1 in the proof of Theorem~\ref{thm:mainthBts}, i.e., when the condition in~\eqref{eq:epscond'ts} is satisfied, due to the structure of the bound claimed in Theorem~\ref{thm:mainthBts2}). We assume, without loss of generality, that~$\max(\eps_V, \eps_V': V \in \{X, Y\}) < 1/2$ and~$J(\Delta) \neq \emptyset$. Note that~$J(\Delta) \not \subseteq \hat{\Pi}^{\Delta}_{B,e}$ implies the existence of an index~$\hat{j} = j(\tilde{X}_1,\hdots, \tilde{X}_{n_X}, \tilde{Y}_1,\hdots, \tilde{Y}_{n_Y})\in J(\Delta)$ and a step~$\hat{k}=k(\tilde{X}_1,\hdots, \tilde{X}_{n_X}, \tilde{Y}_1,\hdots, \tilde{Y}_{n_Y})$, say, in the while loop of Algorithm~\ref{algo:bootts2}, at which
\begin{equation*}
	|S^{\Delta}_{W,\hat{j}}|> \hat{c}_{1-\alpha, \hat{\Pi}_{\hat{k}}}(\tilde{\Sigma}^{\Delta}_{0}) \quad \text{ and } \quad J(\Delta)\subseteq \hat{\Pi}_{\hat{k}}.
\end{equation*}
By definition~$\hat{c}_{1-\alpha, \hat{\Pi}_{\hat{k}}}(\tilde{\Sigma}^{\Delta}_{0})$ is the~$m(B, \alpha, d)$th order statistic from the i.i.d.~sample~$$\|z_1^{(\hat{k})}\|_{\infty, \hat{\Pi}_{\hat{k}}}, \hdots, \|z_B^{(\hat{k})}\|_{\infty, \hat{\Pi}_{\hat{k}}},$$ which obviously dominates the~$m(B, \alpha, d)$th order statistic~$o_{J(\Delta)}^{(\hat{k})}$, say, of the i.i.d.~sample~$$\|z_1^{(\hat{k})}\|_{\infty, J(\Delta)}, \hdots, \|z_B^{(\hat{k})}\|_{\infty, J(\Delta)}.$$ We hence obtain
\begin{equation*}
	\max_{j\in J(\Delta)}|S^{\Delta}_{W,j}|
	\geq
	|S^{\Delta}_{W,\hat{j}}|
	> 
	\hat{c}_{1-\alpha, \hat{\Pi}_{\hat{k}}}(\tilde{\Sigma}^{\Delta}_{0})
	\geq 
	o_{J(\Delta)}^{(\hat{k})},
\end{equation*}
and conclude (with the argument used in Case 1 in the proof of Theorem~\ref{thm:mainthBts} applied to the first summand in the first subsequent upper bound) that for a suitable constant, with the required dependence properties,
\begin{equation}
	\begin{aligned}\label{eqn:redstep2}
		\P\left(J(\Delta) \not \subseteq \hat{\Pi}^{\Delta}_{B,e}\right)
		& \leq
		\P \left(\max_{j\in J(\Delta)}|S^{\Delta}_{W,j}|>c_{1-\alpha,  J(\Delta)}(\tilde{\Sigma}^{\Delta}_{0})\right) + \P\left(c_{1-\alpha,  J(\Delta)}(\tilde{\Sigma}^{\Delta}_{0}) > o_{J(\Delta)}^{(\hat{k})}\right) \\
		& \leq \alpha +  C \times \left(\mathfrak{A}^X_{n_X} + \mathfrak{A}^Y_{n_Y}+\mathfrak{B}^{\Delta} + \mathfrak{C}^{\Delta}\right) + \P\left(c_{1-\alpha,  J(\Delta)}(\tilde{\Sigma}^{\Delta}_{0}) > o_{J(\Delta}^{(\hat{k})}\right).
	\end{aligned}
\end{equation}
It remains to verify that
\begin{equation}\label{eqn:lastin}
\P\left(c_{1-\alpha,  J(\Delta)}(\tilde{\Sigma}^{\Delta}_{0}) > o_{J(\Delta)}^{(\hat{k})}\right) \leq B^{-1}.
\end{equation}
There are at most~$d$ steps in Algorithm~\ref{algo:bootts2}, and we can implicitly imagine~$j = 1, \hdots, d$ independent underlying samples of i.i.d.~random vectors~$z_1^{(j)}, \hdots, z_B^{(j)} \sim \mathsf{N}_{d}(0, \tilde{\Sigma}^{\Delta}_{0})$ conditionally on the observations. Let us denote by~$o_{J(\Delta)}^{(k)}$ the $m(B, \alpha, d)$th order statistic from~$$\|z_1^{(k)}\|_{\infty, J(\Delta)}, \hdots, \|z_B^{(k)}\|_{\infty, J(\Delta)}.$$ By the independence and distributional assumptions imposed on the random variables generated in Algorithm~\ref{algo:bootts2}, abbreviating~$\sigma(\tilde{X}_1, \hdots, \tilde{X}_{n_X}, \tilde{Y}_1, \hdots, \tilde{Y}_{n_Y}) =: \mathcal{O}$,
\begin{align*}
	\P\left(c_{1-\alpha,  J({\Delta})}(\tilde{\Sigma}^{\Delta}_{0}) > o_{J(\Delta)}^{(\hat{k})} \mid \mathcal{O} \right) &\leq \P\left(c_{1-\alpha,  J(\Delta)}(\tilde{\Sigma}^{\Delta}_{0}) > \min_{j = 1}^d o_{J(\Delta)}^{(j)} \mid \mathcal{O} \right) \\
	&= 1 - \P\left(c_{1-\alpha,  J(\Delta)}(\tilde{\Sigma}^{\Delta}_{0}) \leq \min_{j = 1}^d o_{J(\Delta)}^{(j)} \mid \mathcal{O} \right) \\
	&= 1 - \P^d\left(c_{1-\alpha,  J(\Delta)}(\tilde{\Sigma}^{\Delta}_{0}) \leq o_{J(\Delta)}^{(1)} \mid \mathcal{O}  \right).
\end{align*}
But Lemma~\ref{lem:auxB} applied (conditionally on~$\mathcal{O}$) and with~$z_i := \|z_i^{(1)}\|_{\infty, J(\Delta)}$ for~$i = 1, \hdots, B$,~$\beta = 1-\alpha$ and~$\gamma = (1-1/B)^{1/d}$, noting that then~$c_{1-\alpha,  J(\Delta)}(\tilde{\Sigma}^{\Delta}_{0}) = Q_F(1-\alpha)$ (in the notation of Lemma~\ref{lem:auxB}), delivers 
\begin{equation*}
	(1-1/B)^{1/d} = \gamma \leq 
	\P\left(z^*_{Q_{B, \beta}(\gamma) + 1} \geq Q_F(\beta) \mid \mathcal{O} \right) =  
	\P\left(o_{J(\Delta)}^{(1)} \geq c_{1-\alpha,  J(\Delta)}(\tilde{\Sigma}^{\Delta}_{0}) \mid \mathcal{O} \right).
\end{equation*}
Hence,
$$
\P\left(c_{1-\alpha,  J({\Delta})}(\tilde{\Sigma}^{\Delta}_{0}) > o_{J(\Delta)}^{(\hat{k})} \mid \mathcal{O} \right) \leq B^{-1},
$$
which implies~\eqref{eqn:lastin}.
\end{proof}

\end{appendix}

\end{document}